\newcommand{\lenos}{\tagsleft@true\let\veqno\@@leqno}
\newcommand{\reqnos}{\tagsleft@false\let\veqno\@@eqno}
\newtheorem{thm}{Theorem}[section]
\newtheorem{theorem}[thm]{Theorem}
\newtheorem{corollary}[thm]{Corollary}
\newtheorem{lemma}[thm]{Lemma}
\newtheorem{proposition}[thm]{Proposition}
\newtheorem{definition}[thm]{Definition}
\newtheorem{remark}[thm]{Remark}
\newtheorem{question}[thm]{Question}
\newtheorem{example}[thm]{Example}
\numberwithin{equation}{section}
\newcommand{\FF}{\ensuremath{\mathbb{F}}}
\newcommand{\RR}{\ensuremath{\mathbb{R}}}
\newcommand{\NN}{\ensuremath{\mathbb{N}}}
\newcommand{\xx}{\ensuremath{\bm{x}}}
\newcommand{\ee}{\ensuremath{\bm{e}}}
\newcommand{\XX}{\ensuremath{\mathbb{X}}}
\newcommand{\XB}{\ensuremath{\mathcal{X}}}
\newcommand{\EE}{\ensuremath{\mathcal{E}}}
\newcommand{\Ind}{\ensuremath{\mathbbm{1}}}
\newcommand{\g}{\ensuremath{\bm{g}}}
\DeclareMathOperator{\sgn}{sign}
\DeclareMathOperator{\spn}{span}
\DeclareMathOperator{\supp}{supp}
\newcommand{\la}{\boldsymbol\lambda}
\keywords{Non-linear approximation, greedy bases, weak greedy algorithm, quasi-greedy basis.}
\begin{document}
	\title[Lebesgue-type estimates for greedy algorithms]{Lebesgue-type estimates for greedy algorithms in quasi-Banach spaces}
	\author[M. Berasategui]{Miguel Berasategui}

\address{Miguel Berasategui
	\\
	UBA - Pab I, Facultad de Ciencias Exactas y Naturales \\ Universidad de Buenos Aires \\ Buenos Aires 1428, Argentina}
\email{mberasategui@dm.uba.ar}

	\author[P. M. Bern\'a]{Pablo M. Bern\'a}
	\address{Pablo M. Bern\'a\\
		Departamento de Matemáticas, CUNEF Universidad\\
		 Madrid 28040, Spain.}
	\email{pablo.berna@cunef.edu}
	
		\author[H. V. Chu]{H\`ung Vi\d{\^e}t Chu}
	\address{H\`ung Vi\d{\^e}t Chu\\
		Department of Mathematics, Washington and Lee University, Lexington, VA 24450, USA.}
	\email{hchu@wlu.edu}
	
	\author[A. García]{Andrea García}
	\address{Andrea Garc\'{i}a\\
	Universidad San Pablo CEU, CEU Universities, Madrid 28003, Spain, and 		Departamento de Matemáticas, CUNEF Universidad, Madrid 28040, Spain.}
	\email{andrea.garciapons@usp.ceu.es}

	\begin{abstract}
		We continue the study of Lebesgue-type parameters for various greedy algorithms in quasi-Banach spaces. First, we introduce a parameter that can be used with the quasi-greedy parameter to obtain the exact growth of the Lebesgue parameter for strong partially greedy bases. Second, we establish a new upper bound for the Lebesgue parameter for semi-greedy bases using the quasi-greedy and the squeeze symmetry parameters. Finally, we answer several open questions regarding the optimal power in various bounds proved in [F. Albiac, J. L. Ansorena, and P. M. Bern\'{a}, New parameters and Lebesgue-type estimates in greedy approximation, \textit{Forum Math. Sigma} \textbf{10} (2022), 1--39].
	\end{abstract}
	
	\subjclass[2020]{41A65 (primary), 41A46, 46B15, 46B45 (secondary)}
	
	\keywords{Nonlinear approximation; Thresholding Greedy Algorithm; partially quasi greedy basis; semi-greedy basis}
	
	\thanks{The first author was supported by the Grants ANPCyT PICT 2018-04104 and CONICET PIP 11220200101609CO. The second and fourth author were supported by the Grant PID2022-142202NB-I00 (Agencia Estatal de Investigación, Spain).}
	
	\maketitle

    \tableofcontents
	\section{Introduction and main results}\noindent
A \emph{quasi-Banach space} is a complete vector space $\XX$ over the  field $\FF=\mathbb R$ or $\mathbb C$ equipped with a \emph{quasi-norm}, i.e., a map $\|\cdot\|\colon \XX\to [0,\infty)$ that has all properties of a norm except that the triangle inequality is replaced by 
	\begin{equation}\label{defquasinorm}
	\|f+g\|\ \le\ K( \| f\| + \|g\|),\quad f,g\in \XX,
	\end{equation}
	for some $K\ge 1$ independent of $f$ and $g$. The smallest $K$ is called the \textit{modulus of concavity}. Given $0<p\le 1$, a \emph{$p$-Banach space} is a quasi-Banach space whose quasi-norm is $p$-subadditive:
	\[
	\| f+g\|^p \ \le\ \| f\|^p +\| g \|^p, \quad f,g\in\XX.
	\]
	It is trival that every $p$-Banach space is a quasi-Banach space with $K \le 2^{1/p-1}$, and conversely, by the Aoki-Rolewicz Theorem \cite{Ao,R}, every quasi-Banach space is a $p$-Banach space under an equivalent quasi-norm. 
	
	We let $\mathbb{X}$ denote separable $p$-Banach spaces $(0<p\le 1)$ (and thus, all quasi-norms are continuous). Let $\XB=(\xx_n)_{n=1}^\infty\subset\XX$ be a \emph{fundamental minimal system} for $\XX$, which is a sequence that satisfies the following conditions:
	\begin{itemize}
		\item[i)] $\overline{\spn}(\xx_n \colon n\in\NN)=\XX$, and
		\item[ii)] there is a unique sequence $\XB^{\ast}=(\xx_{n}^{\ast})_{n=1}^\infty$ in the dual space $\XX^{\ast}$ such that $(\xx_{n}, \xx_{n}^{\ast})_{n=1}^{\infty}$ is a biorthogonal system.
		\end{itemize}
	We refer to $\XB^{\ast}$ as the \emph{dual system} of $\XB$. 

	   When $\overline{\spn(\xx_n^*: n\in \NN)}^{w^*}=\XX^*$, $\XB$ is called a \emph{Markushevich basis for $\XX$}.	In this case, each element $f\in\XX$ can be represented by the unique formal series expansion $\sum_{n=1}^\infty \xx_n^*(f)\xx_n$. If additionally,  	
	\begin{itemize}
		\item[iii)]  the \textit{partial sum projections} $P_m : \XX \mapsto\XX$ with respect to $\XB$, given by
		$$f\ \mapsto\ P_{m}[\XB,\mathbb X](f)=P_m(f)\ :=\ \sum_{n=1}^{m} \xx_n^*(f)\, \xx_{n}, \quad f\in\XX,\, m\in\NN,$$
		are uniformly bounded, we say that $\XB$ is a \textit{Schauder} basis.
	\end{itemize}
 For the greedy algorithms to work, we need to assume that both $\XB$ and $\XB^*$ are bounded. For each $A\subseteq \NN$, we let $$\EE_A\ :=\ \{(\varepsilon_n)_{n\in A}\subset \mathbb{F}\,:\, |\varepsilon_n| = 1\mbox{ for all }n\in A\}.$$
 Given a fundamental minimal system $\XB=(\xx_n)_{n=1}^\infty$, a finite set $A\subseteq\NN$, and $\varepsilon=(\varepsilon_n)_{n\in A}\in\EE_A$, define
		\[
		\textstyle
		\Ind_{\varepsilon,A}[\XB,\XX]\ =\ \Ind_{\varepsilon,A}\ :=\ \sum_{n\in A} \varepsilon_n \xx_n \mbox{ and } \Ind_{A}[\XB,\XX]\ =\ \Ind_{A}\ :=\ \sum_{n\in A} \xx_n
		\]
and the projection operator
$$P_A[\XB, \XX](f)\ =\ P_A(f)\ :=\ \sum_{n\in A}\xx_n^*(f)\xx_n.$$
	
Since 1999, one of the most studied algorithms in the theory of nonlinear approximation is the Thresholding Greedy Algorithm introduced by Konyagin and Temlyakov \cite{KoTe1999}. For $f\in\XX$ and $m\in\mathbb N_0$, a \textit{greedy set} $A$ of $f$ of order $m$ is a collection of $m$ indices with $| \xx_n^*(f)|\ge | \xx_k^*(f)|$ for all $n\in A$ and for all $k\not\in A$. Then $P_A(f)$ is called a \textit{greedy sum} of $f$ of order $m$. Greedy sets are not necessarily unique when there are different coefficients having the same modulus. To resolve this issue, the Thresholding Greedy Agorithm (TGA) $(\mathcal G_m)_{m=1}^\infty$ uses the natural ordering of $\mathbb{N}$ to construct a unique greedy set of every order. 
In particular, for $f\in\XX$ and $m\in\mathbb N$, a greedy set $A_m(f)$ is defined recursively as follows: let
$$A_1(f)\ :=\ \min\{n\in \mathbb N\,:\, |\xx_n^*(f)| = \max_{j\in \mathbb N} |\xx_j^*(f)|\};$$
supposing that $A_{m-1}$ has been defined for some $m\ge 2$, we set
	$$k(m) \ :=\ \min\left\{ n\in\mathbb N \backslash A_{m-1}(f)\, :\, | \xx_n^*(f)|=\max_{j\not\in A_{m-1}(f)}| \xx_{j}^*(f)\right\}$$
	and $$A_m(f)\ =\ A_{m-1}(f)\cup \{ k(m)\}.$$ Then 
	$$\mathcal G_m[\XB,\XX](f)\ =\ \mathcal G_m(f)\ :=\ \sum_{n\in A_m(f)}\xx_n^*(f)\xx_n,\quad f\in\XX, m\in\mathbb N.$$

	As for every algorithm, a question raised in \cite{KoTe1999} was when $\mathcal{G}_m(f)\rightarrow f$ in norm for every $f\in \XX$. The convergence requirement turns out to be equivalent to the notion of quasi-greediness which is captured by the \textit{quasi-greedy parameter} $\mathbf g_m^c$:
	$$\mathbf g_m^c \ :=\ \sup_{k\le m}\sup_{f\neq 0}\frac{\| f-\mathcal G_k(f)\|}{\| f\|}.$$
	
	\begin{definition}\normalfont\cite{KoTe1999}
		A fundamental minimal system $\XB$ for a quasi-Banach space $\XX$ is \textit{quasi-greedy} if $C_q:=\sup_m \mathbf g_m^c<\infty$.
	\end{definition}

Wojtaszczyk \cite{Woj2000} proved that $\lim_{m\rightarrow\infty}\|f-\mathcal G_m(f)\| = 0$ for every $f\in \XX$ if and only if $\XB$ is quasi-greedy.

\begin{remark}\normalfont
A standard perturbation argument gives that $\mathbf g_m^c$ is the smallest constant $C$ such that
$$\| f-P_A(f)\| \ \le\ C\| f\|,\quad f\in\XX,\, A\, \text{greedy set of}\, f, | A|\le m.$$
Moreover, we define the parameter $\mathbf g_m$  as the smallest constant $C$ such that
$$\| P_A(f)\| \ \le\ C\| f\|,\quad f\in\XX,\, A\, \text{greedy set of}\, f, | A|\le m.$$
We have
\begin{equation}\label{quasi}
	\mathbf g_m \ \le\ (1+(\mathbf g_m^c)^p)^{1/p}\mbox{ and } \mathbf g_m^c \ \le\ (1+(\mathbf g_m)^p)^{1/p}.
\end{equation}
\end{remark}
	
Since 2011, many authors \cite{AAB2, BBG2017,BBGHO2018,DKO,GHO2013} have studied the performance of the TGA through the following parameters: for every $m\in \mathbb{N}$,  $\mathbf L_m[\XB,\XX]=\mathbf L_m$ and $\mathbf L_m^a[\XB,\XX]=\mathbf L_m^a$ are the smallest numbers such that for each $f\in\mathbb X$,
\begin{align*}\| f-P_A(f)\| &\ \le\ \mathbf L_m\inf\{\| f-y\|\, :\, |\supp(y)|\le| A|\}, \quad A \mbox{ greedy set of } f, | A|\le m, \mbox{ and }\\
\|f-P_A(f)\| &\ \le\ \mathbf L_m^a\inf\{ \|f-P_B(f)\|\, :\, |B|\le |A|\},\quad A \mbox{ greedy set of } f, |A|\le m.\end{align*}
When $\mathbf L_m =O(1)$, we say that $\XB$ is a \textit{greedy} basis \cite{AABW2021,KoTe1999}, which is characterized by unconditionality and democracy. 

\begin{definition}\normalfont
A fundamental minimal system $\XB$ for a quasi-Banach space $\mathbb X$ is said to be \textit{unconditional} if $\sup_m \mathbf k_m <\infty$, where $\mathbf k_m[\XB,\mathbb X]=\mathbf k_m$ is the smallest constant with
$$\| P_A(f)\|\ \le\ \mathbf k_m\| f\|,\quad f\in\mathbb X, |A|\le m.$$
Meanwhile, $\XB$ is \textit{democratic} if $\sup_m \boldsymbol{\mu}_m<\infty$, where 
$$\boldsymbol{\mu}_m[\XB,\mathbb X]\ =\ \boldsymbol{\mu}_m\ :=\ \sup_{| A|= | B|\le m}\frac{\|\Ind_{A}\|}{\| \Ind_B\|}.$$
\end{definition}

When $\mathbf L_m^a=O(1)$, $\XB$ is called \textit{almost-greedy} and is characterized by quasi-greediness and democracy \cite{AABW2021,DKKT2003}.

\begin{remark}\normalfont
Related to $\boldsymbol{\mu}_m$, we define
$$\boldsymbol{\mu}_m^d[\XB,\mathbb X]\ =\ \boldsymbol{\mu}_m^d\ :=\ \sup_{| A|= | B|\le m, A\cap B=\emptyset}\frac{\|\Ind_{A}\|}{\| \Ind_B\|}.$$
It is easy to see that $\boldsymbol{\mu}_m^d \le \boldsymbol{\mu}_m\le (\boldsymbol{\mu}_m^d)^2$.
\end{remark}

To study whether the greedy approximation always performs better than the convenient partial summations $(P_m)_{m=1}^\infty$, Dilworth et al.\ \cite{DKKT2003} introduced the notion of \textit{partially greedy Schauder bases} as Schauder bases with an absolute positive constant $C$ such that 
$$\| f-\mathcal G_m(f)\|\ \le\ C\| f-P_m(f)\|,\quad \forall f\in\XX, \forall m\in\mathbb N.$$
They established that a Schauder basis $\XB$ of a Banach space is partially-greedy in a Banach space if and only if $\XB$ is quasi-greedy and conservative, where $\XB$ is $D_c$-\textit{conservative} if
\begin{equation}\label{con}
\| \Ind_A\|\ \le\ D_c\| \Ind_B\|,\quad \forall | A|\le| B|, A<B.
\end{equation}
Here $A < B$ means that $a < b$ for every $a\in A$ and $b\in B$.  The same characterization holds for quasi-Banach spaces \cite{Berna2}. 

Berasategui et al.\ \cite{BBL} extended the partially greedy property to Markushevich bases by introducing the \textit{strong partially greedy} property. For $m\in \mathbb{N}$, the $m$\textsuperscript{th} \textit{strong residual Lebesgue parameter} is the smallest number $\mathbf L_m^s = \mathbf L_m^s[\XB, \mathbb{X}]$ such that   
\begin{equation}\label{partially}\|f - P_A(f)\|\ \le\ \mathbf L_m^s\inf_{k\le |A|}\|f-P_k(f)\|, \quad f\in \mathbb{X}, A\mbox{ greedy set of }f, |A|\le m.\end{equation}

\begin{definition}\normalfont
A fundamental minimal system $\XB$ for a quasi-Banach space $\mathbb X$ is \textit{strong partially greedy} if $\sup_m \mathbf L_m^s<\infty$.
\end{definition}

\begin{remark}\normalfont Every quasi-greedy fundamental minimal system is a Markushevich basis. 
(see \cite[Theorem 1]{Woj2000} and \cite[Theorem 4.1]{AABW2021}). Furthermore, every strong partially greedy fundamental minimal system is quasi-greedy, so every strong partially greedy fundamental minimal system must be Markushevich.
\end{remark}

To estimate $\mathbf L_m^s$, the authors of \cite{BBL} used the following parameter: for $m\in \mathbb{N}$, 
$$\omega_m\ :=\ \sup\left\{ \frac{\| f+t\Ind_{\varepsilon,A}\|}{\| f+t\Ind_{\eta,B}\|} \,:\, (A, B, f, \varepsilon, \eta)\in\mathcal F_m\right\},$$
where $(A, B, f, \varepsilon, \eta)\in\mathcal F_m$ if $| A|\le | B|\le m$, $A< m$, $A<\supp(f)\cup B$, $\supp(f)\cap B=\emptyset$, $\varepsilon\in\mathcal E_A$, $\eta\in\mathcal E_B$ and $| t|\ge \max_n|\xx_n^*(f)|$.

\begin{theorem}\cite[Proposition 1.13 and Theorem 1.14]{BBL}\label{f1}
Let $\XB$ be a fundamental minimal system for a Banach space $\XX$. Then for each $m\in \mathbb{N}$,
$$
\boldsymbol{g}_m^c\ \le\ \mathbf L_m^s,\quad \omega_m \ \le\ \max_{1\le k\le m}\mathbf L_m^s,\quad\mbox{ and }\quad \mathbf L_{m}^s\ \le\ \mathbf g_m \omega_m.
$$
\end{theorem}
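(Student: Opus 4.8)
The plan is to treat the three inequalities separately, the first two being soft and the third carrying all the content.

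\emph{First inequality.} This is immediate from the definitions. The infimum in \eqref{partially} ranges over $k\le|A|$ and includes $k=0$, where $f-P_0(f)=f$, so $\inf_{k\le|A|}\|f-P_k(f)\|\le\|f\|$. Hence $\|f-P_A(f)\|\le\mathbf L_m^s\|f\|$ for every greedy set $A$ of $f$ with $|A|\le m$, and by the perturbation characterization of $\mathbf g_m^c$ recorded in the Remark this forces $\mathbf g_m^c\le\mathbf L_m^s$.

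\emph{Second inequality.} Given $(A,B,f,\varepsilon,\eta)\in\mathcal F_m$, I would manufacture one vector realizing the quotient: set $F:=t\Ind_{\varepsilon,A}+f+t\Ind_{\eta,B}$. Since $|t|\ge\max_n|\xx_n^*(f)|$, $A\cap B=\emptyset$, and $\supp(f)\cap B=\emptyset$, every coordinate of $F$ on $A\cup B$ has modulus $|t|$, dominating all others; thus $B$ is a greedy set of $F$ of order $|B|\le m$ with $F-P_B(F)=f+t\Ind_{\varepsilon,A}$, the numerator. For the denominator, $A<\supp(f)\cup B$ makes $A$ an initial block of $\supp(F)$, so $k_0:=\max A$ satisfies $P_{k_0}(F)=t\Ind_{\varepsilon,A}$ and $F-P_{k_0}(F)=f+t\Ind_{\eta,B}$. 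The ordering hypotheses defining $\mathcal F_m$ (in particular the positional restriction written ``$A<m$'') are exactly what guarantees $k_0\le|B|$, so $k_0$ is admissible in $\inf_{k\le|B|}\|F-P_k(F)\|$. Applying \eqref{partially} to $F$ and the greedy set $B$ then gives
\[ \|f+t\Ind_{\varepsilon,A}\|=\|F-P_B(F)\|\le\mathbf L_{|B|}^s\,\|F-P_{k_0}(F)\|=\mathbf L_{|B|}^s\,\|f+t\Ind_{\eta,B}\|, \]
and supremizing over $\mathcal F_m$ yields $\omega_m\le\max_{1\le k\le m}\mathbf L_k^s$ (equal to the displayed quantity by monotonicity of $k\mapsto\mathbf L_k^s$).

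\emph{Third inequality.} Fix $f$, a greedy set $A$ with $|A|\le m$, and $k\le|A|$; it is enough to prove $\|f-P_A(f)\|\le\mathbf g_m\omega_m\|f-P_k(f)\|$ for each such $k$, as the infimum then inherits the bound. Write $g:=f-P_k(f)$, $A_1:=\{1,\dots,k\}\setminus A$, $A_2:=A\setminus\{1,\dots,k\}$ and $h:=g-P_{A_2}(f)$. One checks the two identities $f-P_A(f)=h+P_{A_1}(f)$ and $g=h+P_{A_2}(f)$, so numerator and denominator share the tail $h$, which is supported on $(k,\infty)$ with $\|h\|_\infty\le\alpha:=\min_{n\in A_2}|\xx_n^*(f)|$. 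Since $|A|\ge k$ one has $|A_1|\le|A_2|\le m$; the block $A_1$ lies to the left of $\supp(h)\cup A_2$; the coordinates of $P_{A_1}(f)$ have modulus $\le\alpha$ (as $A_1\cap A=\emptyset$ and $A$ is greedy); and $A_2$ is a greedy set of $g$ of order $\le m$ with $\min_{n\in A_2}|\xx_n^*(g)|=\alpha$. The two engines are a quasi-greedy truncation lemma, stating that for a greedy set $S$ of $y$ with $|S|\le m$ and $\tau=\min_{n\in S}|\xx_n^*(y)|$ one has $\|y-P_S(y)+\tau\Ind_{\varepsilon,S}\|\le\mathbf g_m\|y\|$ (with $\varepsilon_n=\sgn(\xx_n^*(y))$), and the parameter $\omega_m$ itself. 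Applying the lemma to $y=g$, $S=A_2$, $\tau=\alpha$ bounds the truncated denominator $\|h+\alpha\Ind_{\eta,A_2}\|\le\mathbf g_m\|g\|$, and $\omega_m$ (base $h$, left block $A_1$, right block $A_2$, common level $\alpha$, whose admissibility in $\mathcal F_m$ is verified above) gives $\|h+\alpha\Ind_{\varepsilon,A_1}\|\le\omega_m\|h+\alpha\Ind_{\eta,A_2}\|$. Chaining these reduces the theorem to comparing the genuine residual $\|h+P_{A_1}(f)\|$ with its constant-level surrogate $\|h+\alpha\Ind_{\varepsilon,A_1}\|$.

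\emph{Main obstacle.} The crux is precisely that last comparison: $\omega_m$ only compares \emph{constant-modulus} blocks, whereas $P_{A_1}(f)$ carries non-constant moduli $\le\alpha$, and the inequality $\|h+P_{A_1}(f)\|\le\|h+\alpha\Ind_{\varepsilon,A_1}\|$ simply fails with constant $1$ for a general quasi-greedy system. Passing from $P_{A_1}(f)$ to the level $\alpha$ by a separate truncation would spend a second quasi-greedy factor and spoil the clean product $\mathbf g_m\omega_m$. The delicate part of the argument is therefore to interleave the truncation and the positional swap on a single cleverly combined vector — so that only \emph{one} factor $\mathbf g_m$ is spent in total — while checking that at each stage $A_2$ (respectively $A_1$) remains a bona fide greedy set of order $\le m$. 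I expect essentially all the difficulty to concentrate here; the two supremum inequalities above and the bare swap are routine once this reconciliation is arranged.
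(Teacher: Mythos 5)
This theorem is not proved in the paper: it is quoted verbatim from \cite{BBL}, so there is no internal proof to compare against and I am judging your argument on its own merits. Your first inequality is correct (the infimum in \eqref{partially} includes $k=0$, giving $\|f-P_A(f)\|\le \mathbf L_m^s\|f\|$).

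Your second inequality contains a genuine error. You claim that the hypothesis ``$A<m$'' in the definition of $\mathcal F_m$ guarantees $k_0:=\max A\le |B|$, so that $k_0$ is admissible in $\inf_{k\le|B|}\|F-P_k(F)\|$. It does not: $A<m$ only gives $\max A<m$, while $|B|$ may be far smaller than $m$ (take $A=\{5\}$, $B=\{10,11\}$, $m=100$; then $k_0=5>2=|B|$). The standard repair --- which this very paper uses in the second half of the proof of Theorem \ref{main1} --- is to pad with a filler set $U\subset[1,\max A]\setminus A$ chosen so that $\max A\le |B\cup U|\le m$ (possible because $|B|\ge|A|$ and $\max A<m$), apply \eqref{partially} to $F:=t\Ind_{\varepsilon,A}+t\Ind_{U}+f+t\Ind_{\eta,B}$ with the greedy set $B\cup U$ and the index $k_0=\max A$, and absorb the resulting constant $\mathbf L^s_{|B\cup U|}$ into $\max_{1\le k\le m}\mathbf L^s_k$. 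Without the filler set the argument simply does not close.

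For the third inequality your decomposition $f-P_A(f)=h+P_{A_1}(f)$, $f-P_k(f)=h+P_{A_2}(f)$ is the right one, but the proof is left unfinished and, more importantly, the difficulty is located in the wrong place. The comparison you flag as the crux --- $\|h+P_{A_1}(f)\|$ versus $\|h+\alpha\Ind_{\varepsilon,A_1}\|$ --- is handled with constant exactly $1$ once you take the supremum over signs: since $|\xx_n^*(f)|\le\alpha$ for $n\in A_1$, the map $(b_n)_{n\in A_1}\mapsto\|h+\sum_{n\in A_1}b_n\xx_n\|$ is convex on the polydisc of radius $\alpha$ and attains its maximum at an extreme point, so $\|h+P_{A_1}(f)\|\le\sup_{\varepsilon\in\mathcal E_{A_1}}\|h+\alpha\Ind_{\varepsilon,A_1}\|$, and $\omega_m$ controls every $\varepsilon$ uniformly, so nothing is spent here. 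The load-bearing step is instead your unproved ``truncation lemma'' $\|h+\alpha\Ind_{\varepsilon(g),A_2}\|\le\mathbf g_m\|g\|$, and your worry that it would ``spend a second quasi-greedy factor'' is backwards: ordering $A_2=\{n_1,\dots,n_r\}$ by decreasing $|\xx_{n_j}^*(g)|$ and setting $c_j=\alpha/|\xx_{n_j}^*(g)|$ (so $0<c_1\le\dots\le c_r=1$), Abel summation exhibits $h+\alpha\Ind_{\varepsilon(g),A_2}=\sum_{j=1}^{r}(c_j-c_{j-1})\bigl(g-P_{\{n_1,\dots,n_{j-1}\}}(g)\bigr)$ as a convex combination over greedy sets of $g$, which costs a single quasi-greedy factor --- not a truncation-operator bound of order $\mathbf g_m^2$. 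As submitted, the second inequality is wrong as argued and the third is an outline whose two substantive steps are missing.
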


Thanks to \cite[Example 3.4]{BBL}, we know that the bound $\mathbf{L}_m^s \approx \omega_m \mathbf{g}_m$ is not optimal in general. The first goal of this paper is to introduce a new parameter that, together with $\mathbf g_m$, gives the correct growth of $\mathbf{L}_m^s$.  For $m\in \mathbb{N}$, we define the \textit{conservative squeeze symmetry parameter} $\la_m^c  =\la_m^c[\XB,\XX]$ as the smallest constant $C$ such that
\begin{equation}\label{spg}
    \min_{n\in B}|\xx_n^*(f)|\|\Ind_{\varepsilon, A}\|\ \le\  C\| f\|,
\end{equation}
whenever $A\subseteq\mathbb N$, $\varepsilon\in\mathcal E_{A}$, and $B$ is a greedy set of $f\in\XX$ satisfying $A\le m$, $A< \supp(f)$, and $|A|\le |B|\le m$. 

The parameter $\la_m^c$ is inspired by the \textit{$m$\textsuperscript{th} squeeze symmetry parameter} $\la_m$, where $\la_m =\la_m[\XB,\XX]$ is the smallest constant $C$ such that
	$$\min_{n\in B}|\xx_n^*(f)|\|\Ind_{\varepsilon, A}\|\ \le\ C\| f\|,$$
	whenever $A\subseteq\mathbb N$, $\varepsilon\in\mathcal E_{A}$, and $B$ is a greedy set of $f\in\XX$ satisfying  $| B|=| A|\le m$. This parameter $\la_m$ appeared in \cite[Theorem 3.5]{AAB2}, which stated that 
    \begin{equation}\label{ch1}\mathbf L_m\ \approx\ \max\{\la_m, \mathbf k_m\}\end{equation}
    and thus, answered a question by Temlyakov \cite{Temlyakov2011} about the correct growth of $\mathbf L_m$.
        
    If we require further that $A\cap \supp(f)=\emptyset$ in the definition of $\la_m$, we obtain $\la_m^d$, called the \textit{$m$\textsuperscript{th} disjoint squeeze symmetry parameter}, and \cite[Theorem 4.2]{AAB2} gave
		\begin{equation}\label{ch2}
			\mathbf L_m^a\ \approx\ \max\{\la_m^d, \mathbf g_m\}.
		\end{equation}

Our first result establishes the analog of \eqref{ch1} and \eqref{ch2} for $\mathbf L_m^s$. 

\begin{theorem}\label{main1}
Let $\XB$ be a fundamental minimal system for a $p$-Banach space $\XX$. Then there are two positive constants $C_1$ and $C_2$ depending only on $p$ such that 
$$C_1 \mathbf L_m^s\ \le\ \max\{\la_m^c, \mathbf g_m\}\ \le\ C_2\mathbf L_m^s.$$
\end{theorem}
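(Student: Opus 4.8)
The plan is to prove the two inequalities separately; in each direction the idea is to compare a greedy residual of a suitably constructed vector with an initial-segment residual and to exploit the $p$-subadditivity of the quasi-norm.

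\emph{Lower estimate $\max\{\la_m^c,\mathbf g_m\}\le C_2\mathbf L_m^s$.} The bound $\mathbf g_m\lesssim_p\mathbf L_m^s$ is immediate: taking $k=0$ in \eqref{partially} (so $P_0=0$) gives $\|f-P_A(f)\|\le\mathbf L_m^s\|f\|$ for every greedy set $A$ with $|A|\le m$, whence $\mathbf g_m^c\le\mathbf L_m^s$; combined with \eqref{quasi} and $\mathbf L_m^s\ge1$ this yields $\mathbf g_m\le2^{1/p}\mathbf L_m^s$. For $\la_m^c$, fix data $(f,B,A,\varepsilon)$ as in \eqref{spg}, set $\alpha:=\min_{n\in B}|\xx_n^*(f)|$ and $r:=\max A\le m$. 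Since $A<\supp(f)$, the block $\{1,\dots,r\}$ is disjoint from $\supp(f)$, so I can pick a padding set $P\subseteq\{1,\dots,r\}\setminus A$ with $|P|=\max\{0,\,r-|B|\}$ (there is room because $r-|A|\ge r-|B|$) and arbitrary signs $\delta\in\EE_P$, and put
\[
g\ :=\ f+\alpha\,\Ind_{\varepsilon,A}+\alpha\,\Ind_{\delta,P}.
\]

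\emph{Analysis of $g$.} All coefficients of $g$ on $A\cup P$ have modulus $\alpha$, those on $B$ have modulus $\ge\alpha$, and those on $\supp(f)\setminus B$ have modulus $\le\alpha$; hence $S:=B\cup P$ is a greedy set of $g$ of order $|B|+|P|=\max\{|B|,r\}\le m$, breaking the ties at level $\alpha$ in favour of $P$ over $A$. Then $g-P_S(g)=(f-P_B(f))+\alpha\,\Ind_{\varepsilon,A}$, while $r\le|S|$ and $P_r(g)=\alpha\,\Ind_{\varepsilon,A}+\alpha\,\Ind_{\delta,P}$, so $g-P_r(g)=f$. Applying \eqref{partially} to $g$ with the greedy set $S$ and using $r\le|S|$ gives
\[
\big\|(f-P_B(f))+\alpha\,\Ind_{\varepsilon,A}\big\|\ =\ \|g-P_S(g)\|\ \le\ \mathbf L_m^s\,\|g-P_r(g)\|\ =\ \mathbf L_m^s\,\|f\|.
\]
Since $\alpha\,\Ind_{\varepsilon,A}=(g-P_S(g))-(f-P_B(f))$ and $\|f-P_B(f)\|\le\mathbf g_m^c\|f\|\le\mathbf L_m^s\|f\|$, $p$-subadditivity gives $\alpha\|\Ind_{\varepsilon,A}\|\le2^{1/p}\mathbf L_m^s\|f\|$, i.e. $\la_m^c\le2^{1/p}\mathbf L_m^s$.

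\emph{Upper estimate $C_1\mathbf L_m^s\le\max\{\la_m^c,\mathbf g_m\}$.} Fix $f$, a greedy set $A$ with $|A|=s\le m$ and least coefficient $\alpha$, any $k\le s$, and set $D=\{1,\dots,k\}$. The identities
\[
f-P_A(f)=\big(f-P_{A\cup D}(f)\big)+P_{D\setminus A}(f),\qquad f-P_D(f)=\big(f-P_{A\cup D}(f)\big)+P_{A\setminus D}(f)
\]
split both residuals into a common term plus a correction. As $A\setminus D$ is a greedy set of $f-P_D(f)$, the common term $f-P_{A\cup D}(f)=(f-P_D(f))-P_{A\setminus D}(f-P_D(f))$ has norm $\le\mathbf g_m^c\|f-P_D(f)\|$. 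For the correction $P_{D\setminus A}(f)$, observe that $D\setminus A$ lies entirely before $\supp(f-P_D(f))$, has at most $|A\setminus D|$ elements, and carries coefficients of modulus $\le\alpha\le\min_{n\in A\setminus D}|\xx_n^*(f)|$; thus, relative to $f-P_D(f)$, the pair $(A\setminus D,\,D\setminus A)$ plays exactly the role of $(B,A)$ in \eqref{spg}. Feeding the two bounds into $\|f-P_A(f)\|^p\le\|f-P_{A\cup D}(f)\|^p+\|P_{D\setminus A}(f)\|^p$ and taking the infimum over $k\le s$ would, together with \eqref{quasi}, yield $\mathbf L_m^s\le C_p\max\{\la_m^c,\mathbf g_m\}$.

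\emph{Main obstacle.} The crux is to control $\|P_{D\setminus A}(f)\|$ by $C_p\la_m^c\|f-P_D(f)\|$. The naive route—replacing the coefficients on $D\setminus A$ by the constant $\alpha$ via an upper-truncation inequality and only then invoking $\la_m^c$—costs an extra quasi-greedy factor and produces merely the product bound $\mathbf g_m\la_m^c$, which is the non-optimal estimate already recorded in Theorem \ref{f1}. To reach the genuine maximum one must instead apply $\la_m^c$ \emph{directly to each level set} $E_t=\{n\in D\setminus A:|\xx_n^*(f)|>t\}$, each of which still precedes $\supp(f-P_D(f))$ and has at most $|A\setminus D|$ elements, so that $\|\Ind_{\varepsilon,E_t}\|\le\la_m^c\|f-P_D(f)\|/\min_{n\in A\setminus D}|\xx_n^*(f)|$ uniformly in $t$, and then recombine $P_{D\setminus A}(f)=\int_0^{\alpha}\Ind_{\varepsilon,E_t}\,dt$ so that no truncation constant multiplies $\la_m^c$. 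Carrying this layered recombination out in the quasi-Banach setting, where the Abel/integral representation interacts awkwardly with $p$-subadditivity and a careful $p$-convex grouping is needed to avoid both a dimensional factor $m^{1/p}$ and a spurious $\mathbf g_m$, is where the real work of the theorem lies.
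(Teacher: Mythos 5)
Your proof of the right-hand inequality $\max\{\la_m^c,\mathbf g_m\}\le C_2\mathbf L_m^s$ is correct and is essentially the paper's argument: the same padded vector $g=f+\alpha\Ind_{\varepsilon,A}+\alpha\Ind_{\delta,P}$ (the paper calls the padding set $U$ and takes $k=\max A$ in \eqref{partially}), the same identification of $B\cup P$ as a greedy set of $g$ and of $g-P_{\max A}(g)$ with $f$, and the same $p$-triangle step to isolate $\alpha\Ind_{\varepsilon,A}$.

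The other direction, however, has a genuine gap. You set up exactly the paper's decomposition $f-P_A(f)=\bigl((f-P_k(f))-P_{A\setminus D}(f-P_k(f))\bigr)+P_{D\setminus A}(f)$ and correctly handle the first summand with $\mathbf g_m^c$, but you stop at the estimate $\|P_{D\setminus A}(f)\|\lesssim_p\la_m^c\|f-P_k(f)\|$, declaring the ``layered recombination'' over level sets to be the unfinished real work of the theorem. That step is not open: it is a one-line application of Lemma \ref{convexity}, i.e., \cite[Corollary 1.3]{AABW2021}. Writing $P_{D\setminus A}(f)=\max_{n\in D\setminus A}|\xx_n^*(f)|\sum_{n\in D\setminus A}a_n\xx_n$ with $|a_n|\le 1$, that lemma gives
$$\|P_{D\setminus A}(f)\|\ \le\ \mathbf A_p\,\max_{n\in D\setminus A}|\xx_n^*(f)|\,\sup_{\eta\in\EE_{D\setminus A}}\|\Ind_{\eta,D\setminus A}\|,\qquad \mathbf A_p=(2^p-1)^{1/p},$$
with a constant depending only on $p$ --- no $\mathbf g_m$ and no $m^{1/p}$ appear, precisely because the lemma already performs, once and for all, the Abel-summation/$p$-convex grouping you describe. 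Since $\max_{n\in D\setminus A}|\xx_n^*(f)|\le\min_{n\in A\setminus D}|\xx_n^*(f-P_k(f))|$, $A\setminus D$ is a greedy set of $f-P_k(f)$, $D\setminus A\le m$, $D\setminus A<\supp(f-P_k(f))$, and $|D\setminus A|\le|A\setminus D|\le m$, the definition \eqref{spg} (which allows every sign pattern $\eta$) bounds the right-hand side by $\mathbf A_p\la_m^c\|f-P_k(f)\|$, and the direction closes with constant $(2+\mathbf A_p^p)^{1/p}$. Your worry that normalizing the coefficients ``costs an extra quasi-greedy factor'' conflates this $p$-convexity estimate with the genuinely harder lower-truncation estimate of Theorem \ref{trunc}; the latter is not needed here.
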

Different from \cite[Theorem 1.14]{BBL} and \cite[Theorem 4.2]{Berna2}, Theorem \ref{main1} provides the correct order of $\mathbf L_m^s$.

The second goal of the paper is to use $\mathbf g_m$ and $\la_m$ to bound the Lebesgue parameter for semi-greedy bases, whose definition is based on the Chebyshev Thresholding Greedy Algorithm (CTGA) \cite{DKK2003}. The CTGA 
produces approximations ($\mathcal{CG}_m(f))_{m=1}^\infty$ for each $f\in \XX$ such that
$$\| f-\mathcal{CG}_m(f)\|\ =\ \inf_{a_n\in\mathbb F}\left\|f-\sum_{n\in A_m(f)}a_n\xx_n\right\|, \quad m\in\mathbb{N}.$$ 
It follows from the definition that $\|f-\mathcal{CG}_m(f)\|\le \|f-\mathcal{G}_m(f)\|$, so the CTGA is an enhancement of the TGA. 

\begin{definition}\normalfont
A fundamental minimal system is \textit{semi-greedy} if 
$\sup_m \mathbf L_m^{ch}<\infty$, where $\mathbf L_m^{ch}[\XB,\XX]=\mathbf L_m^{ch}$ is the smallest constant such that for every $f\in\XX$,
$$\| f-\mathcal{CG}_m(f)\|\ \le\ \mathbf L_m^{ch}\inf\{ \| f-y\| : | \supp(y)|\le m\}.$$
\end{definition}

Dilworth et al. \cite{DKK2003} proved that in a Banach space with finite cotype, a Schauder basis is semi-greedy if and only if it is almost greedy. Later, Bern\'{a} \cite{Berna2019, Berna2020} removed the finite cotype requirement and extended the equivalence to weights. More recently, Berasategui and Lassalle proved that the equivalence still holds for the more general class of Markushevich bases in Banach spaces  \cite{BL2023a} and studied the corresponding weighted version and parameters \cite{BL2023b}. It is worth noting that without the Markushevich hypothesis, the equivalence may fail \cite{BL2023a}. The following gives an upper bound for $\mathbf L_m^{ch}$.

\begin{theorem}\cite[cf.\ Theorem 1.2]{BBGHO20}\label{oldbound}
Let $\XB$ be a Markushevich basis for a Banach space $\XX$. For all $m\in\mathbb{N}$,
$$\mathbf L_m^{ch}\ \le\ \mathbf g_{2m}^c + 4\mathbf g_m\tilde{\boldsymbol{\mu}}_m,$$
where 
$$\tilde{\boldsymbol{\mu}}_m \ =\ \sup_{\substack{| A|=| B|\le m\\ \varepsilon\in \mathcal E_A, \eta\in \mathcal E_B}}\frac{\| \Ind_{\varepsilon, A}\|}{\| \Ind_{\eta, B}\|}$$ is the \textit{super-democracy} parameter.
\end{theorem}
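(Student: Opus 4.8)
The plan is to fix $f\in\XX$ and $m\in\NN$, set $A:=A_m(f)$ and $\alpha:=\min_{n\in A}|\xx_n^*(f)|$, and reduce the theorem to showing that $\|f-\mathcal{CG}_m(f)\|\le(\mathbf g_{2m}^c+4\mathbf g_m\tilde{\boldsymbol{\mu}}_m)\|f-y\|$ for an \emph{arbitrary} $y$ with $|\supp(y)|\le m$; taking the infimum over such $y$ then yields the bound on $\mathbf L_m^{ch}$. Write $r:=f-y$, $\Lambda:=\supp(y)$ and $D:=\Lambda\setminus A$. Since $\mathcal{CG}_m(f)$ is a norm-minimizer among all vectors supported on $A$, it suffices to build one competitor $z$ with $\supp(z)\subseteq A$ and to estimate $\|f-z\|$. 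The crucial idea is to construct $z$ from the residual rather than from $f$: first I would choose a greedy set $G$ of $r$ containing every index with $|\xx_n^*(r)|>\alpha$. Because $|\xx_n^*(f)|\le\alpha$ off $A$, all such indices lie in $A\cup\Lambda$, so $G\subseteq A\cup\Lambda$ and $|G|\le 2m$. Since $|\Lambda|\le m=|A|$ gives $|A\setminus\Lambda|\ge|D|$, the finitely many coordinates of $y+P_G(r)$ lying outside $A$ — which are supported on $(\Lambda\cup G)\setminus A=D$ — can be injected onto the free slots $A\setminus\Lambda$. Writing $u:=P_{A^c}(y+P_G(r))$ for this off-$A$ part and $\tilde u$ for its transplant onto $A\setminus\Lambda$, I would set $z:=P_A(y+P_G(r))+\tilde u$, which is supported on $A$.

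With this choice a direct computation gives the clean splitting
$$f-z\ =\ \bigl(r-P_G(r)\bigr)+(u-\tilde u).$$
The first summand is a residual of a greedy sum of $r$ of order $\le 2m$, so by the definition of the residual quasi-greedy parameter $\|r-P_G(r)\|\le\mathbf g_{2m}^c\|r\|$, producing the first term of the bound. The whole point of building $z$ from $r$ is that the correction $u-\tilde u$ is \emph{subthreshold}: for $n\in D\cap G$ the corresponding coordinate of $u$ equals $\xx_n^*(f)$, of modulus $\le\alpha$, while for $n\in D\setminus G$ it equals $\xx_n^*(y)=\xx_n^*(f)-\xx_n^*(r)$, of modulus $\le 2\alpha$ since then $n\notin A$ and $n\notin G$. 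Thus $u$ is supported on $D$ (with $|D|\le m$) with coefficients bounded by $2\alpha$, and $\tilde u$ carries the same coefficients on $A\setminus\Lambda$.

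It then remains to bound $\|u\|$ and $\|\tilde u\|$ by $2\mathbf g_m\tilde{\boldsymbol{\mu}}_m\|r\|$ each. For $\tilde u$, the quasi-greedy upper estimate for constant coefficients (with constant governed by $\mathbf g_m$) gives $\|\tilde u\|\le 2\mathbf g_m\,\alpha\,\|\Ind_{\varepsilon,\supp\tilde u}\|$; since $\supp\tilde u\subseteq A\setminus\Lambda$, where $|\xx_n^*(r)|=|\xx_n^*(f)|\ge\alpha$ and these indices sit inside the greedy set $G$ of $r$, the truncation (lower) estimate yields $\alpha\|\Ind_{\varepsilon,\supp\tilde u}\|\le\|r\|$. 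For $u$ the only difference is that its support $D$ is \emph{not} a greedy set of $r$, so before invoking the truncation estimate I would first transfer the indicator $\Ind_{\varepsilon,D}$ to an equal-sized subset of $A\setminus\Lambda$ using the super-democracy parameter $\tilde{\boldsymbol{\mu}}_m$ (legitimate because $|A\setminus\Lambda|\ge|D|$). Summing the two contributions yields the factor $4$, and combining with the residual estimate completes the argument.

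The hard part will be precisely this misalignment between the greedy set $A$ of $f$ and the support $\Lambda$ of the competitor $y$: the index sets that arise ($D$ and $A\setminus\Lambda$) are generally not greedy sets of $r$, so neither the residual parameter nor the truncation estimate applies to them directly. The resolution is the two-fold device above — absorbing all the \emph{super}threshold coordinates of $r$ into a single greedy set $G$ of order $\le 2m$, and showing that everything left over is subthreshold on at most $m$ coordinates, hence accessible to the super-democracy and truncation estimates. A minor technical point, handled by the usual tie-breaking for greedy sets, is the treatment of coordinates with $|\xx_n^*(r)|$ exactly equal to $\alpha$; and the exact value of the constants (in particular the $4$) follows from routine bookkeeping of the transplantation and the sign-change constants.
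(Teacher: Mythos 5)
The paper does not actually prove this statement---it is quoted from \cite{BBGHO20}---so I am judging your argument on its own terms; structurally it follows the same scheme as the paper's proof of Theorem \ref{cheby} (decompose $f$ minus a competitor supported on $A$ into a quasi-greedy residual of $r=f-y$ plus a subthreshold correction). Your setup is sound: $G\subseteq A\cup\Lambda$ with $|G|\le 2m$, the identity $f-z=(r-P_G(r))+(u-\tilde u)$, the bound $\|r-P_G(r)\|\le \mathbf g_{2m}^c\|r\|$, and the observation that $u$ is supported on $D$ with $|D|\le m$ and coefficients of modulus at most $2\alpha$ are all correct.

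The gap is in the final step. You invoke the truncation lower estimate in the form $\alpha\|\Ind_{\varepsilon,\supp\tilde u}\|\le\|r\|$, i.e.\ with constant $1$, for a set $\supp\tilde u\subseteq A\setminus\Lambda$ that is in general \emph{not} a greedy set of $r$ (it is only a subset of the superlevel set $\{n:|\xx_n^*(r)|\ge\alpha\}$; the $|D|$ largest coefficients of $r$ may all sit in $\Lambda$) and with signs unrelated to $\varepsilon(r)$. No such constant-$1$ estimate holds: the correct statement is that for a genuine greedy set $B$ of $r$ one has $\min_{n\in B}|\xx_n^*(r)|\,\|\Ind_{\varepsilon(r),B}\|\le 2\mathbf g_{|B|}\|r\|$ (real scalars, via Abel summation on the partial greedy sums), and this is precisely where the factor $\mathbf g_m$ in the theorem must come from---not from the ``quasi-greedy upper estimate for constant coefficients,'' which in a Banach space is free by Lemma \ref{convexity} (there $\mathbf A_1=1$). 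The repair is to bound $\|u\|\le 2\alpha\sup_{\varepsilon}\|\Ind_{\varepsilon,D}\|\le 2\alpha\,\tilde{\boldsymbol{\mu}}_m\|\Ind_{\varepsilon(r),E'}\|$, where $E'$ is a greedy set of $r$ of order $|D|\le m$; since $A\setminus\Lambda$ supplies at least $|D|$ coefficients of $r$ of modulus $\ge\alpha$, one has $\min_{n\in E'}|\xx_n^*(r)|\ge\alpha$, and the truncation estimate applied to $E'$ gives $\|u\|\le 4\mathbf g_m\tilde{\boldsymbol{\mu}}_m\|r\|$. Note also that your transplant $\tilde u$ is superfluous and doubles the constant: taking $z:=P_A(y+P_G(r))$, which is already supported on $A$, gives $f-z=(r-P_G(r))+u$ directly, and with the corrected truncation step this yields exactly $\mathbf g_{2m}^c+4\mathbf g_m\tilde{\boldsymbol{\mu}}_m$. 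As written, your accounting for the constant $4$ rests on the false constant-$1$ estimate, so the stated inequality is not established.
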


\begin{remark}\normalfont
We have the disjoint version of $\tilde{\boldsymbol{\mu}}_m$:
$$\tilde{\boldsymbol{\mu}}_m^d[\XB,\mathbb X]\ =\ \tilde{\boldsymbol{\mu}}_m^d\ :=\ \sup_{\substack{| A|=| B|\le m, A\cap B = \emptyset\\ \varepsilon\in \mathcal E_A, \eta\in \mathcal E_B}}\frac{\| \Ind_{\varepsilon, A}\|}{\| \Ind_{\eta, B}\|}.$$
Clearly, $\tilde{\boldsymbol{\mu}}_m^d \le \tilde{\boldsymbol{\mu}}_m\le (\tilde{\boldsymbol{\mu}}_m^d)^2$.
\end{remark}

Our second result bounds $\mathbf L_m^{ch}$ from above using $\mathbf g_m$ and $\la_m$. 

\begin{theorem}\label{cheby}
Let $\XB$ be a fundamental minimal system for a $p$-Banach space $\XX$. For $m\in \mathbb{N}$, we have
$$\mathbf L_m^{ch}\ \lesssim\ \begin{cases}\max\{ \mathbf g_m, \la_m\},&\mbox{ if } p =1;\\ 
\max\{ \mathbf g_m^{1+1/p}, \la_m\}, &\mbox{ if } p < 1.\end{cases}$$
\end{theorem}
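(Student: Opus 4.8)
The plan is to bound $\|f-\mathcal{CG}_m(f)\|$ by testing the Chebyshev infimum against a single competitor supported on the greedy set, and then to split the resulting error into a ``democracy'' piece controlled by $\la_m$ and a ``greedy'' piece controlled by powers of $\mathbf g_m$. Fix $f$, let $A=A_m(f)$ be the greedy set with $|A|=m$, put $\alpha:=\min_{n\in A}|\xx_n^*(f)|$, and let $y$ be a near-optimal $m$-term approximation with $B:=\supp(y)$, $|B|\le m$, and $\sigma:=\|f-y\|$; write $g:=f-y$, $E:=A\setminus B$, and $D:=B\setminus A$. Since $\mathcal{CG}_m(f)$ is the best approximation to $f$ from $\spn\{\xx_n:n\in A\}$, for any $z$ with $\supp(z)\subseteq A$ we have $\|f-\mathcal{CG}_m(f)\|\le\|f-z\|$. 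First I would take the $z$ that copies the coefficients of $y$ on $A\cap B$ and those of $f$ on $E$, which yields the identity $f-z=g+P_D(y)-P_E(f)$; by $p$-subadditivity this reduces the problem to bounding $\|P_E(f)\|$ (the large greedy coefficients that $y$ misses) and $\|P_D(y)\|$ (the mass of $y$ sitting off the greedy set) by a multiple of $\sigma$.

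For the democracy piece I would exploit a counting observation: on $E$ one has $|\xx_n^*(g)|=|\xx_n^*(f)|\ge\alpha$, so $g$ has at least $|E|\ge|D|$ coefficients of modulus $\ge\alpha$; hence every greedy set of $g$ of size $|E|$ (resp.\ $|D|$) has minimum modulus $\ge\alpha$, and the definition of $\la_m$ gives directly
$$\alpha\,\|\Ind_{\varepsilon,E}\|\ \le\ \la_m\sigma\qquad\text{and}\qquad \alpha\,\|\Ind_{\eta,D}\|\ \le\ \la_m\sigma$$
for all signs $\varepsilon,\eta$. This is the only place where $\la_m$ enters, and it enters cleanly with exponent one.

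The greedy piece is the heart of the matter, and the main obstacle I expect. Unlike the almost-greedy estimate \eqref{ch2}, where the competitor is a projection $P_\Lambda(f)$ and $A\setminus\Lambda$ is automatically a greedy set of $f-P_\Lambda(f)$, a general competitor $y$ may overshoot, so $E=A\setminus B$ need not be a greedy set of $g$ and $P_E(f)=P_E(g)$ cannot be bounded by $\mathbf g_{|E|}\sigma$ outright. I would overcome this by a truncation/Abel-summation argument over the dyadic scales of $(\xx_n^*(f))_{n\in E}$: splitting $E$ according to $|\xx_n^*(f)|\sim 2^j\alpha$, each scale sits inside a greedy set of $g$ at the corresponding threshold, so the $p$-Banach quasi-greedy projection and truncation estimates apply, with one factor $\mathbf g_m$ from the projection and an extra factor $\mathbf g_m^{1/p}$ from recombining the scales in the $p$-norm; this gives $\|P_E(f)\|\lesssim\mathbf g_m^{1+1/p}\sigma$, and the same treatment of the small coefficients on $D$, together with the democracy bound above, yields $\|P_D(y)\|\lesssim\max\{\mathbf g_m^{1+1/p},\la_m\}\sigma$. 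Assembling the three contributions through $p$-subadditivity produces the stated bound.

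Finally, for $p=1$ I would replace the $p$-norm recombination of the scales by the ordinary triangle inequality together with convexity of the norm, which lets the telescoping of the truncation estimates be carried out with a single power of $\mathbf g_m$ instead of $\mathbf g_m^{2}$, removing the extra factor and giving the sharper $\max\{\mathbf g_m,\la_m\}$. The delicate points I anticipate are controlling the overshoot of the general competitor so that the scale decomposition of $g$ stays within greedy sets of size $\le m$, and tracking the powers of $\mathbf g_m$ exactly so that no product $\mathbf g_m\la_m$ is ever created.
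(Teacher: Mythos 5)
Your overall strategy---testing the Chebyshev infimum against a single competitor supported on $A$ and splitting the error into a piece controlled by $\la_m$ and a piece controlled by powers of $\mathbf g_m$---is the same as the paper's, and you correctly identify the central obstacle (a general competitor $y$ overshoots, so $E=A\setminus\supp(y)$ need not be a greedy set of $g=f-y$). The gap is in the step $\|P_E(f)\|\lesssim \mathbf g_m^{1+1/p}\sigma$, which is not merely unproven but false as stated. Take any almost greedy basis that is conditional, let $h$ have support of size $2m$, let $E\subseteq\supp(h)$ with $|E|=m$ be arbitrary, and choose $y$ supported on $B:=\supp(h)\setminus E$ so that $f:=h+y$ has coefficients $\xx_n^*(h)$ on $E$ and a tiny constant $\delta$ on $B$; then $A=E$ is the greedy set of $f$, $f-y=h$, and your claim would give $\|P_E(h)\|\lesssim\max\{\mathbf g_m^{1+1/p},\la_m\}\|h\|$ for arbitrary $E$ and $h$, i.e.\ $\mathbf k_m\lesssim\max\{\mathbf g_{2m}^{1+1/p},\la_{2m}\}$, contradicting conditionality. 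The mechanism of the failure is visible in your own sketch: the democracy parameter controls only the \emph{flattened} quantities $2^j\alpha\|\Ind_{\varepsilon,E_j}\|\le\la_m\sigma$ at each dyadic scale $E_j$, so recombining in the $p$-norm yields $\|P_E(g)\|^p\lesssim J\la_m^p\sigma^p$ with $J$ the number of nonempty scales, and $J$ is not bounded by any function of $\mathbf g_m$. The advertised ``extra factor $\mathbf g_m^{1/p}$ from recombining the scales'' is exactly the content of the truncation estimate $\mathbf r_{2m}\le\mathbf g_{2m}\eta_p(\mathbf g_{2m})$ of Theorem \ref{trunc}, but that estimate bounds $\alpha\|\Ind_{\varepsilon(h),G_\alpha(h)}\|$, i.e.\ the coefficients \emph{after} truncation to level $\alpha$, and says nothing about the projection with the original unbounded coefficients. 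The same defect recurs in $\|P_D(y)\|=\|P_D(f)-P_D(g)\|$: the term $P_D(f)$ is fine via Lemma \ref{convexity} and $\la_m$, but $P_D(g)$ carries the overshoot on a non-greedy set.

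The repair is to build the competitor out of the truncation itself rather than out of $f$: the paper takes $z=P_A(f-T_\alpha(f-y))$ with $\alpha=\max_{n\notin A}|\xx_n^*(f)|$, so that on $E$ the competitor keeps $\xx_n^*(f)-\alpha\sgn(\xx_n^*(f-y))$ and the error decomposes as $f-z=T_\alpha(f-y)+P_{\supp(y)\setminus A}(f-T_\alpha(f-y))$. The first term is bounded by $(\mathbf r_{2m}^p+(\mathbf g_{2m}^c)^p)^{1/p}\|f-y\|\lesssim\mathbf g_m^{1+1/p}\|f-y\|$, and the second has all coefficients of modulus at most $2^{1/p}\alpha$, so Lemma \ref{convexity} together with $\la_m$ applies with exponent one. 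Your remark about the case $p=1$ does point at the right phenomenon (the convexity-based truncation bound $\mathbf r_m\lesssim\mathbf g_m$ in Banach spaces), but as written the key projection estimate in your argument does not close.
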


\begin{remark}\rm \label{remarkimprovement}
We give an example of when the bound in Theorem \ref{cheby} is better than the one in Theorem \ref{oldbound}. Let $\XB$ be the summing basis of $\mathtt{c}_0$, i.e., for each $n$, $\xx_n:=\sum_{1\le j\le n}\ee_j$, where $(\ee_j)_{j\in \NN}$ is the canonical unit vector basis of $\mathtt{c}_0$. For all $m\in \NN$, we have 
$$
\left\|\sum_{n=1}^{2m}(-1)^{n} \xx_n \right\|\ =\ 1,\quad \left\|\sum_{n=1}^{2m} \xx_{n}\right\|\ =\ 2m,\quad \mbox{ and } \quad \left\|\sum_{n=1}^{m} \xx_{2n}\right\|\ =\ m.
$$
It follows that
$$
\mathbf g_{m}\tilde{\boldsymbol{\mu}}_{m}\ \ge\ 2m^2. 
$$
On the other hand, since $\XB$ is normalized and $\left\Vert \xx_n^*\right\Vert\le 2$ for all $n$, we have $\max\left\lbrace \mathbf{g}_{m}, \la_{m}\right\rbrace\le 2m$. 
\end{remark}

It is known that there are semi-greedy fundamental minimal systems that are not even unconditional for constant coefficients \cite[Proof of Example 4.5]{BL2023a}. In such cases, $(\mathbf L_m^{ch})_{m\in \NN}$ is bounded but $( \mathbf g_m)_{m\in \NN}$ and $( \mathbf \la_m)_{m\in \NN}$ are not. On the other hand, for Markushevich bases in Banach spaces, we have $\max\{ \mathbf g_m,\la_m\}\lesssim \max \{(\mathbf L_m^{ch})^2, (\mathbf L_{m+1}^{ch})^2\}$ \cite[Propositions 6.5 and 6.11]{BL2023b}, and the proofs can be easily adapted to Schauder bases in $p$-Banach spaces. 

\begin{question}\normalfont  Can the estimate $\max\{ \mathbf g_m,\la_m\}\lesssim \max \{(\mathbf L_m^{ch})^2, (\mathbf L_{m+1}^{ch})^2\}$ be improved?  Specifically, is it possible to show that $\max\{ \mathbf g_m,\la_m\}\lesssim \mathbf L_m^{ch}$ for Markushevich or Schauder bases? 
\end{question}

Last but not least, we answer several questions about the optimal power in various bounds in \cite{AAB2}.
We define three \textit{ratio functions} $\mathfrak{R}_B, \mathfrak{R}_M$, and $\mathfrak{R}_S$ depending on $\XB$. 

\begin{definition}\normalfont
	Given two parameters $\mathbf a_m = \mathbf a_m[\XB, \mathbb X] > 0$ and $\mathbf b_m = \mathbf b_m[\XB, \mathbb X] > 0$, set
	
	\begin{align*}
	\mathfrak{R}_B(\mathbf a_m, \mathbf{b}_m)&\ := \ \inf U_B(\mathbf a_m, \mathbf b_m), \mbox{ where }\\
    U_B(\mathbf a_m, \mathbf b_m)&\ :=\ \{R: \mbox{ for every space }\XX\mbox{ and its fundamental system }\mathcal{X},\\
    &\mbox{ there exists }C(\XB, \XX)\mbox{ with }\sup_{m}\frac{\mathbf a_m}{(\mathbf b_m)^{R}} \ \le\  C(\XB, \XX)\},	
	\end{align*}

\begin{align*}
	\mathfrak{R}_M(\mathbf a_m, \mathbf b_m)&\ := \ \inf U_R(\mathbf a_m, \mathbf b_m), \mbox{ where }\\
    U_M(\mathbf a_m, \mathbf b_m)&\ :=\ \{R: \mbox{ for every space }\XX\mbox{ and its Markushevich basis }\mathcal{X},\\
    &\mbox{ there exists }C(\XB, \XX)\mbox{ with }\sup_{m}\frac{\mathbf a_m}{(\mathbf b_m)^{R}} \ \le\  C(\XB, \XX)\},\mbox{ and}
	\end{align*}
	
\begin{align*}
	\mathfrak{R}_S(\mathbf a_m, \mathbf b_m)&\ := \ \inf U_S(\mathbf a_m, \mathbf b_m), \mbox{ where }\\
    U_S(\mathbf a_m, \mathbf b_m) &\ :=\ \{R: \mbox{ for every space }\XX\mbox{ and its Schauder basis }\mathcal{X},\\
    &\mbox{ there exists }C(\XB, \XX)\mbox{ with }\sup_{m}\frac{\mathbf a_m}{(\mathbf b_m)^{R}} \ \le\  C(\XB, \XX)\}.
	\end{align*}
	\end{definition}

\begin{remark}\normalfont \label{remarkrelparam}
	 It follows from definitions that $\mathfrak{R}_S(\mathbf a_m, \mathbf b_m)\le \mathfrak{R}_M(\mathbf a_m, \mathbf b_m)\le \mathfrak{R}_B(\mathbf a_m, \mathbf b_m)$. 
\end{remark}

\begin{remark}\normalfont
    If $\mathbf b_m\ge 1$ and $r\notin U_B(\mathbf a_m, \mathbf b_m)$, then $\mathfrak{R}_B(\mathbf a_m, \mathbf b_m)\ge r$. Let $s < r$. For every $m\in \mathbb{N}$, 
    $$\frac{\mathbf a_m}{\mathbf b_m^r}\ \le\ \frac{\mathbf a_m}{\mathbf b_m^s}\ \Longrightarrow\ s\notin U_B(\mathbf a_m, \mathbf b_m).$$
    Hence, $\mathfrak{R}_B(\mathbf a_m, \mathbf b_m)\ge r$. 
\end{remark}

\begin{example}\normalfont
	From ${\boldsymbol{\mu}}_m \le ({\boldsymbol{\mu}}_m^d)^2$, $\tilde{\boldsymbol{\mu}}_m \le (\tilde{\boldsymbol{\mu}}_m^d)^2$, and \cite[Theorem 5.4]{BBGHO20}, we see that 
	$$\mathfrak{R}_M(\tilde{\boldsymbol{\mu}}_m, \tilde{\boldsymbol{\mu}}^d_m) \ =\ \mathfrak{R}_M({\boldsymbol{\mu}}_m, {\boldsymbol{\mu}}^d_m) \ =\ 2.$$
	Moreover, one can check that the proof works for all fundamental minimal systems, so
	$$\mathfrak{R}_B(\tilde{\boldsymbol{\mu}}_m, \tilde{\boldsymbol{\mu}}^d_m) \ =\ \mathfrak{R}_B({\boldsymbol{\mu}}_m, {\boldsymbol{\mu}}^d_m) \ =\ 2.$$
\end{example}

\begin{example}\label{exa1}\normalfont
	From \cite[Theorem 5.2]{BBGHO20} and \cite[Remark 5.3]{BBGHO20}, we know that $$\mathfrak{R}_S(\tilde{\boldsymbol{\mu}}_m, \tilde{\boldsymbol{\mu}}^d_m) \ =\ \mathfrak{R}_S({\boldsymbol{\mu}}_m, {\boldsymbol{\mu}}^d_m) \ \le 1.$$
	Let $\XB$ be a Schauder basis that is not democratic. For every $\varepsilon \in (0,1)$, since $\lim_{m\rightarrow \infty} \boldsymbol{\mu}_{m} = \infty$, it follows that
	$$\frac{\boldsymbol{\mu}_{m}}{(\boldsymbol{\mu}^d_{m})^\varepsilon}\ \ge\ \frac{\boldsymbol{\mu}_{m}}{(\boldsymbol{\mu}_{m})^\varepsilon}\ =\ \boldsymbol{\mu}_{m}^{1-\varepsilon}\rightarrow\infty.$$
	Therefore, $\varepsilon\notin U_S(\boldsymbol{\mu}_{m}, \boldsymbol{\mu}^d_{m})$, so $\mathfrak{R}_S(\boldsymbol{\mu}_m, \boldsymbol{\mu}^d_m)\ge \varepsilon$. Since $\varepsilon\in (0,1)$ is arbitrary, we must have $\mathfrak{R}_S(\boldsymbol{\mu}_m, \boldsymbol{\mu}^d_m) = 1$. Similarly, $\mathfrak{R}_S(\tilde{\boldsymbol{\mu}}_m, \tilde{\boldsymbol{\mu}}^d_m)  = 1$.
\end{example}

We are ready to state our answer to \cite[Question 8.3]{AAB2}: \textit{Given $0<p\le 1$, is there a constant $C$ such that $\la_m \le C\mathbf L_m^a$ for every basis of a $p$-Banach space?} The motivation for this question comes from a result on \cite[Page 23]{AAB2} that 
\begin{align}\label{des}
	\la_m\ \lesssim\ 
	\begin{cases}
		(\mathbf L_m^a)^2, & \mbox{ if } p=1;\\
		(\mathbf L_m^a)^{\left(2+\frac{1}{p}\right)}, &\mbox{ if } p<1.
	\end{cases}
\end{align}
The authors in \cite{AAB2} asked whether the power of $\mathbf L_m^a$ in these bounds can be reduced to $1$; they conjectured that such an improvement is probably impossible. Our next result answers their question. 

\begin{theorem}\label{main3}
    Let $\mathbb{X}$ be a $p$-Banach space with a fundamental minimal system $\XB$. There is a constant $C = C(p)$ with $\la_m\le C (\mathbf L^a_m)^2$. Furthermore, 
    $$\mathfrak{R}_M(\la_{m}, \mathbf L_m^a)\ =\ \mathfrak{R}_B(\la_{m}, \mathbf L_m^a)\ =\ 2\mbox{ and } \mathfrak{R}_S(\la_m, \mathbf L_m^a)\ =\ 1.$$
\end{theorem}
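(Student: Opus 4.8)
The plan is to prove Theorem \ref{main3} in three parts, corresponding to the upper bound $\la_m \le C(\mathbf{L}_m^a)^2$ and to the three ratio-function identities.

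For the upper bound, I would work directly from the definitions of $\la_m$ and $\mathbf{L}_m^a$. Fix $A$, $\varepsilon \in \EE_A$, and a greedy set $B$ of some $f \in \XX$ with $|B| = |A| \le m$; the goal is to bound $\min_{n\in B}|\xx_n^*(f)|\,\|\Ind_{\varepsilon,A}\|$ by $C(\mathbf{L}_m^a)^2\|f\|$. The natural strategy is to factor through the disjoint parameter $\la_m^d$ and the disjoint almost-greedy parameter. First I would recall (or re-prove in the $p$-Banach setting) a factorization of the form $\la_m \lesssim_p (\la_m^d)^{?}$ paralleling the relation $\boldsymbol{\mu}_m \le (\boldsymbol{\mu}_m^d)^2$ from the earlier remark: by splitting $A$ into its intersection with $\supp(f)$ and its complement, one reduces the general squeeze-symmetry estimate to a disjoint one at the cost of a square. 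Then, using \eqref{ch2}, namely $\mathbf{L}_m^a \approx \max\{\la_m^d, \mathbf{g}_m\}$, one already has $\la_m^d \lesssim \mathbf{L}_m^a$ and $\mathbf{g}_m \lesssim \mathbf{L}_m^a$. Combining these should yield $\la_m \lesssim (\la_m^d)^2 \lesssim (\mathbf{L}_m^a)^2$, or more carefully $\la_m \lesssim \max\{(\la_m^d)^2, \mathbf{g}_m^{\text{something}}\} \lesssim (\mathbf{L}_m^a)^2$, with the exact powers absorbed into the constant $C(p)$. The main obstacle here will be controlling the interaction between the $A$-part that overlaps $\supp(f)$ and the greedy set $B$ in the quasi-Banach setting, where the failure of the triangle inequality forces one to track $p$-norms and pay extra powers of $\mathbf{g}_m$; I expect the delicate point is verifying that no factor worse than the square survives.

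For the ratio-function identities, the inequality $\la_m \le C(\mathbf{L}_m^a)^2$ immediately gives $2 \in U_B(\la_m, \mathbf{L}_m^a)$, hence $\mathfrak{R}_B(\la_m, \mathbf{L}_m^a) \le 2$, and by Remark \ref{remarkrelparam} the same bound holds for $\mathfrak{R}_M$ and $\mathfrak{R}_S$. So the content is in the matching lower bounds. To show $\mathfrak{R}_M(\la_m, \mathbf{L}_m^a) \ge 2$ (which by Remark \ref{remarkrelparam} also forces $\mathfrak{R}_B \ge 2$), I would construct, or invoke an existing construction of, a Markushevich basis for which $\la_m \gtrsim (\mathbf{L}_m^a)^{2-\delta}$ fails for every $\delta > 0$ — concretely, a family (or single basis) along which $\la_m$ grows like the square of $\mathbf{L}_m^a$. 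The likely source is a modification of the extremal example in \cite{AAB2} or \cite{BBGHO20} witnessing that the square in $\boldsymbol{\mu}_m \le (\boldsymbol{\mu}_m^d)^2$ is sharp; one transplants that growth into the almost-greedy/squeeze-symmetry setting. This construction is the hard part of the whole theorem: one must simultaneously control $\mathbf{L}_m^a$ from above (via $\max\{\la_m^d, \mathbf{g}_m\}$) and force $\la_m$ to be as large as the square, which requires a carefully engineered basis where the overlap-induced doubling is genuinely attained.

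Finally, for $\mathfrak{R}_S(\la_m, \mathbf{L}_m^a) = 1$, the upper bound $\le 1$ should follow from the improved behavior of Schauder bases — the same phenomenon recorded in Example \ref{exa1}, where passing to Schauder bases collapses the disjoint-to-general square to a single power (cf.\ \cite[Theorem 5.2]{BBGHO20}). Thus for Schauder bases one expects $\la_m \lesssim \mathbf{L}_m^a$ directly, giving $1 \in U_S$. For the matching lower bound $\mathfrak{R}_S \ge 1$, I would argue as in Example \ref{exa1}: take a Schauder basis that is not almost-greedy, so that $\la_m \to \infty$ while $\mathbf{L}_m^a \ge \la_m$ in the relevant regime, and then for any $\varepsilon \in (0,1)$ show $\la_m/(\mathbf{L}_m^a)^\varepsilon \to \infty$ by bounding $\mathbf{L}_m^a$ above by a power of $\la_m$ and exploiting $\la_m \to \infty$. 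The cleanest route is to exhibit a single Schauder basis with $\la_m \approx \mathbf{L}_m^a \to \infty$, which makes $\varepsilon \notin U_S$ for every $\varepsilon < 1$ transparent and pins the infimum at exactly $1$.
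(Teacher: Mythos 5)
Your outline of the upper bound and of the Schauder case is essentially sound. For $\la_m \le C(p)(\mathbf L_m^a)^2$ the paper argues a little differently from your splitting: it proves $\la_m \le \tilde{\boldsymbol\mu}_m^d\,\la_m^d$ in one step, by replacing $\Ind_{\varepsilon,A}$ with $\Ind_D$ for a set $D>A\cup\supp(x)$ of the same cardinality, and then combines $\tilde{\boldsymbol\mu}_m^d\le \mathbf L_m^a$ with $\la_m^d\lesssim_p \mathbf L_m^a$ from \eqref{ch2}; your route through $\la_m\lesssim_p(\la_m^d)^2$ (which is \cite[Lemma 4.1]{AAB2}) reaches the same conclusion, and the extra powers of $\mathbf g_m$ you worry about do not appear in either version. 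For $\mathfrak{R}_S(\la_m,\mathbf L_m^a)=1$ the paper does what you sketch: the upper bound is Theorem \ref{t3v3} ($\la_m\le C\la_m^d$ for Schauder bases) plus $\la_m^d\lesssim \mathbf L_m^a$, and the lower bound uses a quasi-greedy, non-democratic Schauder basis. Note that quasi-greediness, not merely failure of almost greediness, is what lets you bound $\mathbf L_m^a$ above by a power of $\la_m$ via \eqref{ch2}; your ``cleanest route'' ($\la_m\approx\mathbf L_m^a\to\infty$) is exactly this example.

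The genuine gap is the lower bound $\mathfrak{R}_M(\la_m,\mathbf L_m^a)\ge 2$, which is the heart of the theorem and which you leave at the level of ``invoke or modify an existing construction.'' No such construction exists off the shelf: the sharpness examples for $\boldsymbol\mu_m\le(\boldsymbol\mu_m^d)^2$ in \cite{BBGHO20} do not control the quasi-greedy parameter, and without an upper bound on $\mathbf g_m$ you cannot dominate $\mathbf L_m^a$ by $\la_m^d$ through \eqref{ch2}, so the ratio $\la_m/(\mathbf L_m^a)^{2-\varepsilon}$ cannot be driven to infinity. The paper builds a new Banach space from scratch (Proposition \ref{propositionLmalambda}): the completion of $\mathtt{c}_{00}$ under a norm given by two families of seminorms $D_k$ and $Q_k$, engineered so that along a lacunary sequence $(n_k)$ one gets $\la_{n_k}\ge n_k/m_k^2$ (witnessed by $x=\Ind_{I_{n_k}}+n_k^{-1/2}\Ind_{\{n_k+1,\dots,2n_k\}}$), while $\la_{n_k}^d\le 5\sqrt{n_k}$ and, after a delicate case analysis, $\mathbf g_{n_k}\le 19\sqrt{n_k}/m_k$. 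Moreover the basis cannot be Schauder (Theorem \ref{t3v3} would then collapse the square), so even the Markushevich property must be verified by a separate technical lemma (Lemma \ref{lemmaMarkushevich}). None of these steps is routine, and your proposal supplies none of them, so as written the identity $\mathfrak{R}_M(\la_m,\mathbf L_m^a)=\mathfrak{R}_B(\la_m,\mathbf L_m^a)=2$ is not established.
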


\begin{remark}\normalfont
   Theorem \ref{main3} improves the bound for $p<1$ in \eqref{des} by replacing $(2+1/p)$ with $2$. Moreover, the theorem confirms that the power $2$ in $\la_m\lesssim \mathbf (\mathbf L_m^a)^2$ is optimal for fundamental minimal systems and for Markushevich bases.
\end{remark}

We also answer \cite[Question 8.4]{AAB2}: 
	\textit{Thanks \cite[Lemma 4.1]{AAB2} that $\la_m\lesssim (\la_m^d)^2$ and \cite[Inequality (8.6)]{AAB2}, we obtain $\boldsymbol{\nu}_m\lesssim (\la_m^d)^2$, $\tilde{\boldsymbol{\mu}}_m\lesssim(\la_m^d)^2$ and $\boldsymbol{\mu}_m \lesssim (\la_m^d)^2$. Can any of these asymptotic estimates be improved?} Here the parameter $\boldsymbol{\nu}_m$, which is associated with the symmetry for largest coefficients \cite{AAB2,BBG2017}, is defined as $$\boldsymbol{\nu}_m \ =\ \sup\frac{\| \Ind_{\varepsilon, A}+f\|}{\| \Ind_{\delta, B}+f\|},$$
where the supremum is taken over all finite subsets $| A|=| B|\le m$, all signs $\varepsilon\in \mathcal E_A, \delta\in \mathcal E_B$, all $f\in\mathbb X$ with $\max_{n}|\xx_n^*(f)|\le 1$ and $\supp(f)\cap (A\cup B)=\emptyset$.  Under the additional assumption $A\cap B=\emptyset$, we have the parameter $\boldsymbol\nu_m^d$. 
Symmetry for large coefficients is the key property to characterize greedy and almost greedy bases with constant $1$ and was studied in \cite{AW, AA2017, AABB, AABCO}, while its associated parameters $\boldsymbol\nu_m$ and $\boldsymbol\nu_m^d$ were investigated in \cite{AAB2,BBG2017,BBGHO2018}. 

\begin{thm}\label{m4}
    We have $\mathfrak{R}_S(\la_m, \la_m^d) = 1$, while $\mathfrak{R}_B(\la_m, \la_m^d) = \mathfrak{R}_M(\la_m, \la_m^d) = 2$. As a result,  
    \begin{itemize}
        \item $\mathfrak{R}_B(\boldsymbol\mu_{m}, \la^{d}_m) = \mathfrak{R}_M(\boldsymbol\mu_{m}, \la^{d}_m) = 2$,
        \item $\mathfrak{R}_B(\tilde{\boldsymbol\mu}_{m}, \la^{d}_m) = \mathfrak{R}_M(\tilde{\boldsymbol\mu}_{m}, \la^{d}_m) = 2$, and
        \item $\mathfrak{R}_B(\boldsymbol{\nu}_{m},\la_m^d)  =  \mathfrak{R}_M(\boldsymbol{\nu}_{m},\la_m^d) = 2$.
    \end{itemize}
\end{thm}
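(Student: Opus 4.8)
The plan is to treat the Schauder statement $\mathfrak{R}_S(\la_m,\la_m^d)=1$ and the general statement $\mathfrak{R}_B(\la_m,\la_m^d)=\mathfrak{R}_M(\la_m,\la_m^d)=2$ separately, and then to read off the three consequences from a single example. Throughout I would rely on the elementary chain $\boldsymbol\mu_m\le\tilde{\boldsymbol\mu}_m\le\la_m$ together with $\tilde{\boldsymbol\mu}_m\le\boldsymbol\nu_m$ and $\la_m^d\le\la_m$. Here $\boldsymbol\mu_m\le\tilde{\boldsymbol\mu}_m$ is immediate (constant signs are a special case), $\tilde{\boldsymbol\mu}_m\le\la_m$ follows by testing the definition of $\la_m$ against $f=\Ind_{\eta,B}$ (for which $\min_{n\in B}|\xx_n^*(f)|=1$), $\tilde{\boldsymbol\mu}_m\le\boldsymbol\nu_m$ follows by taking $f=0$ in the definition of $\boldsymbol\nu_m$, and $\la_m^d\le\la_m$ holds because $\la_m^d$ is a supremum over a smaller family.

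For the upper bounds I would invoke the existing estimates. By \cite[Lemma 4.1]{AAB2} we have $\la_m\lesssim(\la_m^d)^2$ for every fundamental minimal system, so $2\in U_B(\la_m,\la_m^d)$ and hence $\mathfrak{R}_B(\la_m,\la_m^d)\le 2$; by Remark \ref{remarkrelparam} this also gives $\mathfrak{R}_S\le\mathfrak{R}_M\le 2$. For the sharper Schauder bound $\la_m\lesssim\la_m^d$ I would adapt the disjointification behind \cite[Theorem 5.2]{BBGHO20}, the tool already used in Example \ref{exa1} to prove $\mathfrak{R}_S(\tilde{\boldsymbol\mu}_m,\tilde{\boldsymbol\mu}_m^d)\le 1$: given a configuration $(f,B,A,\varepsilon)$ admissible for $\la_m$, one uses the uniform boundedness of the partial sum projections of a Schauder basis to replace the overlap $A\cap\supp(f)$ by indices disjoint from $\supp(f)$ at the cost of a single multiplicative constant, reducing to a configuration admissible for $\la_m^d$. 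This yields $\mathfrak{R}_S(\la_m,\la_m^d)\le 1$.

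The lower bounds follow the scheme of Example \ref{exa1}. For $\mathfrak{R}_S\ge 1$ I would take any non-democratic Schauder basis as in Example \ref{exa1}; since $\boldsymbol\mu_m\le\la_m$ we have $\la_m\to\infty$, and as $\la_m^d\le\la_m$, for every $\varepsilon<1$ we get $\la_m/(\la_m^d)^\varepsilon\ge\la_m^{1-\varepsilon}\to\infty$, so $\varepsilon\notin U_S$ and $\mathfrak{R}_S\ge\varepsilon$; letting $\varepsilon\uparrow 1$ gives $\mathfrak{R}_S=1$. For $\mathfrak{R}_M\ge 2$ — which together with $\mathfrak{R}_M\le\mathfrak{R}_B\le 2$ forces $\mathfrak{R}_B=\mathfrak{R}_M=2$ — the heart of the matter is to produce a Markushevich basis with $\boldsymbol\mu_m\approx(\la_m^d)^2\to\infty$. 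My plan is to start from the Markushevich basis realizing $\mathfrak{R}_M(\boldsymbol\mu_m,\boldsymbol\mu_m^d)=2$ via \cite[Theorem 5.4]{BBGHO20}, for which $\boldsymbol\mu_m\approx(\boldsymbol\mu_m^d)^2$, and to verify that for this basis the disjoint squeeze symmetry collapses to disjoint democracy, i.e. $\la_m^d\approx\boldsymbol\mu_m^d$. Granting this, $\boldsymbol\mu_m\approx(\boldsymbol\mu_m^d)^2\approx(\la_m^d)^2$, and since $\boldsymbol\mu_m\le\la_m\lesssim(\la_m^d)^2$ we obtain $\la_m\approx(\la_m^d)^2$, so the same computation gives $\mathfrak{R}_M(\la_m,\la_m^d)\ge 2$.

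Finally, the three consequences require no new example. The bounds $\boldsymbol\mu_m,\tilde{\boldsymbol\mu}_m,\boldsymbol\nu_m\lesssim(\la_m^d)^2$ recalled in the statement of \cite[Question 8.4]{AAB2} give $\mathfrak{R}_B\le 2$ for each of the three pairs, while the chain $\boldsymbol\mu_m\le\tilde{\boldsymbol\mu}_m\le\min\{\la_m,\boldsymbol\nu_m\}$ shows that in the basis above each of $\boldsymbol\mu_m,\tilde{\boldsymbol\mu}_m,\boldsymbol\nu_m$ is $\gtrsim\boldsymbol\mu_m\approx(\la_m^d)^2$, whence the matching $\mathfrak{R}_M\ge 2$ and equality throughout. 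The main obstacle is the construction in the previous paragraph, and more precisely the verification that $\la_m^d\lesssim\boldsymbol\mu_m^d$ for the chosen basis: one must show the example is (essentially) quasi-greedy and that neither the signs nor the factor $\min_{n\in B}|\xx_n^*(f)|$ inflate the disjoint squeeze symmetry beyond the disjoint democracy level. These norm estimates are the delicate part, precisely because the example is engineered so that non-disjoint configurations blow up by a square.
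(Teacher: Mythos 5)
Your reductions, your treatment of the Schauder case, and your derivation of the three bulleted consequences all match the paper's route: the upper bounds come from $\la_m\lesssim(\la_m^d)^2$ (\cite[Lemma 4.1]{AAB2}) together with $\boldsymbol\mu_m\le\tilde{\boldsymbol\mu}_m\le\la_m$ and $\boldsymbol\mu_m\le\boldsymbol\nu_m$, and the Schauder bound $\la_m\lesssim\la_m^d$ is exactly Theorem \ref{t3v3}. One caveat there: your gloss of the mechanism --- ``replace the overlap $A\cap\supp(f)$ by indices disjoint from $\supp(f)$ at the cost of a single multiplicative constant'' --- does not work literally, because comparing $\|\Ind_{\varepsilon,A}\|$ with $\|\Ind_{\varepsilon',A'}\|$ for a relocated set $A'$ is a super-democracy statement that the basis constant alone does not control. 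The actual argument (in \cite[Theorem 5.2]{BBGHO20} and in Theorem \ref{t3v3}) splits the configuration at the crossing index $n_1=\min\{n:|I_n\cap B|\ge|I_n^c\cap A|\}$ and pairs $A\cap I_{n_1}^c$ with $P_{I_{n_1}}(f)$ and $A\cap I_{n_1}$ with $f-P_{I_{n_1}}(f)$, each pairing being disjoint by construction, with a boundary case $n_1\in A\cap B$ handled separately. Since you cite the right tool, I would not count this as a gap, only as an inaccurate summary of it.

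The genuine gap is the lower bound $\mathfrak{R}_M(\la_m,\la_m^d)\ge 2$, which is the heart of the theorem. You correctly reduce it to exhibiting a Markushevich basis with $\boldsymbol\mu_m\gtrsim(\la_m^d)^{2-\varepsilon}$ unbounded, but your plan --- take the example realizing $\boldsymbol\mu_m\approx(\boldsymbol\mu_m^d)^2$ from \cite[Theorem 5.4]{BBGHO20} and ``verify that $\la_m^d\approx\boldsymbol\mu_m^d$ for it'' --- defers exactly the hard estimate, as you yourself acknowledge. Bounding $\la_m^d$ from above means controlling $\min_{n\in B}|\xx_n^*(f)|\,\|\Ind_{\varepsilon,A}\|$ against $\|f\|$ for \emph{arbitrary} $f$ and arbitrary greedy sets $B$, not merely for indicator sums, and there is no a priori reason the democracy example of \cite{BBGHO20} admits such control. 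The paper does not reuse that example: Proposition \ref{propositionmulambda} builds a new space, the completion of $\mathtt{c}_{00}$ under $\sup_k\{|a_k|,D_k,Q_k\}$, where the seminorms $Q_k$ carry sign and cancellation constraints (the classes $\Delta_k$) engineered precisely so that a case analysis on where the greedy set $B$ sits yields $\la_{n_k}^d\le 5\sqrt{n_k}$ while $\boldsymbol\mu_{2n_k}\ge n_k/(2m_k^2)$. Until such a construction and its $\la_m^d$ estimate are supplied, the equalities $\mathfrak{R}_M=\mathfrak{R}_B=2$ --- and hence the lower bounds in all three bulleted consequences, which rest on the same example --- remain unproved.
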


\subsection{Further notation}

Let $G(f, m)$ denote the set of all greedy sets of $f$ of order $m$ and let $G(f)$ be the set of all greedy sets of $f$; that is, $G(f) = \cup_{m\ge 0}G(f,m)$.

Given a number $a$, define 
$$\mbox{sign}(a) \ =\ \begin{cases} \frac{a}{|a|}, &\mbox{ if } a \not= 0;\\ 1, &\mbox{ if } a=0.\end{cases}$$ 

When $\XB$ is Schauder, we define the basis constant $K_b: = \sup_{n} \|P_n\|$.

For $n\in \NN$, let $I_n$ and $I_n^c$ denote the interval $\{1,\dots, n\}$ and the set $\NN\backslash I_n$, respectively and let $I_0 := \emptyset$. For $a\in \FF$ and $A\subset \FF$, let $a+A:=\{ a+x: x\in A\}$ and $aA:=\{ ax: x\in A\}$. If $a\in \mathbb{R}$ and $A\subset \RR$, let $A_{>a}:=A\cap (a,\infty)$. We use similar notation for $\ge$, $<$ and $\le$.

Given a set $A$, $A^{<\infty}$ denotes the collection of all finite subsets of $A$, whereas for each $m\in\NN_0$, $A^{(m)}$ consists of all subsets of $A$ with exactly $m$ elements, and $A^{\le m} :=\bigcup_{k=0}^{m}A^{(k)}$. 

Finally, for every sequence $(\textbf{a}_m)_{m\in \NN}$ of some parameter $\textbf a_m$, we assume that $\textbf{a}_0=0$.

\subsection{Structure of the paper}
The paper is organized as follows: Section \ref{S2} proves our first two results, Theorem \ref{main1} and Theorem \ref{cheby}; Section \ref{S4} records a technical result that helps verify Markushevich bases; Section \ref{S5} proves Theorem \ref{m4}, thus answering \cite[Question 8.4]{AAB2}; Section \ref{S6} studies the ratios of $\boldsymbol{\mu}_m$ and $\mathbf{L}_m^a$, proves Theorem \ref{main3}, and computes several ratios that involve $\boldsymbol\nu_m$.

\section{Proofs of Theorems \ref{main1} and \ref{cheby}}\label{S2}
We need the following lemma about $p$-convexity. Set $\mathbf A_p = (2^p-1)^{1/p}$.

\begin{lemma}\cite[Corollary 1.3]{AABW2021}\label{convexity}
Let $\mathbb X$ be a $p$-Banach space for some $0<p\le 1$ and let $\mathcal B=(\xx_n)_n$ be a fundamental minimal system for $\XX$. For all scalars $(a_j)_{j\in A}$ with $A$ a finite set and $| a_j|\le 1$ for every $j\in A$,
$$\left\| \sum_{n\in A}a_n \xx_n\right\|\ \le\ \mathbf A_p \sup\{\|\Ind_{\varepsilon,A}\| : \varepsilon\in\mathcal E_A\}.$$
\end{lemma}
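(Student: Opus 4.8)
The plan is to reduce the bound to a single structural fact: every coefficient $a_n$ with $|a_n|\le 1$ admits a \emph{dyadic unimodular expansion}, after which $p$-subadditivity and a geometric series do the rest. Concretely, I would first prove the following claim: for every scalar $z\in\FF$ with $|z|\le 1$ there exist unimodular scalars $(\omega^{(k)})_{k\ge 1}\subset\FF$ (so $|\omega^{(k)}|=1$) with $z=\sum_{k=1}^{\infty}2^{-k}\omega^{(k)}$. This is established by a greedy/remainder argument: set $r_0:=z$ and, given $r_{k-1}$, choose $\omega^{(k)}:=r_{k-1}/|r_{k-1}|$ when $r_{k-1}\neq 0$ (and $\omega^{(k)}:=1$ otherwise), and put $r_k:=r_{k-1}-2^{-k}\omega^{(k)}$. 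Since $\omega^{(k)}$ points in the direction of $r_{k-1}$, one gets $|r_k|=\big||r_{k-1}|-2^{-k}\big|$, so an induction starting from $|r_0|\le 1$ yields $|r_k|\le 2^{-k}$ for all $k$. The same recursion works verbatim for $\FF=\RR$ (where $\omega^{(k)}\in\{-1,1\}$) and $\FF=\CC$, which is what lets a single argument cover both fields.

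Applying the claim coordinatewise gives, for each $n\in A$, signs $(\varepsilon_n^{(k)})_{k\ge1}$ with $a_n=\sum_{k\ge1}2^{-k}\varepsilon_n^{(k)}$. Collecting coordinates, $\varepsilon^{(k)}:=(\varepsilon_n^{(k)})_{n\in A}\in\EE_A$ and, since $A$ is finite, I would interchange the two (finite-in-$n$) sums to write the partial sums
\[
S_K:=\sum_{k=1}^{K}2^{-k}\Ind_{\varepsilon^{(k)},A}=\sum_{n\in A}\Big(\sum_{k=1}^{K}2^{-k}\varepsilon_n^{(k)}\Big)\xx_n .
\]
The error $\sum_{n\in A}a_n\xx_n-S_K=\sum_{n\in A}r_n^{(K)}\xx_n$ has coefficients bounded by $2^{-K}$, so continuity of the quasi-norm on the finite-dimensional span of $(\xx_n)_{n\in A}$ forces $S_K\to\sum_{n\in A}a_n\xx_n$. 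Now $p$-subadditivity gives
\[
\|S_K\|^{p}\ \le\ \sum_{k=1}^{K}2^{-kp}\,\|\Ind_{\varepsilon^{(k)},A}\|^{p}\ \le\ \Big(\sum_{k=1}^{\infty}2^{-kp}\Big)\sup_{\varepsilon\in\EE_A}\|\Ind_{\varepsilon,A}\|^{p},
\]
and letting $K\to\infty$ together with continuity of $\|\cdot\|$ yields $\big\|\sum_{n\in A}a_n\xx_n\big\|^{p}\le (2^p-1)^{-1}\sup_{\varepsilon}\|\Ind_{\varepsilon,A}\|^{p}$, since $\sum_{k\ge1}2^{-kp}=(2^p-1)^{-1}$. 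Taking $p$-th roots produces the factor $(2^p-1)^{-1/p}=\mathbf A_p$.

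The conceptual crux — and the only place any care is needed — is the dyadic expansion claim, in particular checking that the greedy choice keeps $|r_k|\le 2^{-k}$ and that it applies uniformly over $\RR$ and $\CC$; everything after that is bookkeeping. Convergence of the series is essentially free because $A$ is finite, so no summability hypothesis on the system is used. As a sanity check, at $p=1$ the weights $2^{-k}$ sum to $1$, so the expansion is a genuine convex combination of sign vectors and the constant collapses to $1$, recovering the familiar Banach-space statement.
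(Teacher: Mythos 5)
Your argument is correct and is essentially the standard proof of this fact (the lemma is only cited in the paper, from \cite[Corollary~1.3]{AABW2021}, and the proof there runs along the same lines: expand each coefficient as a geometric series of unimodular scalars, interchange the finite sum over $A$ with the sum over $k$, and apply $p$-subadditivity); your verification that the greedy remainders satisfy $|r_k|\le 2^{-k}$ is the right way to justify the dyadic expansion over both $\RR$ and $\CC$. One point to flag: your computation yields the constant $(2^p-1)^{-1/p}$, which is indeed the correct value of $\mathbf A_p$ (and the one used in \cite{AABW2021}); the paper's line ``Set $\mathbf A_p=(2^p-1)^{1/p}$'' must be a sign typo in the exponent, since for $p<1$ that quantity is $<1$ and the stated inequality would then fail already for $A$ a singleton with $a_n=1$.
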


\begin{proof}[Proof of Theorem \ref{main1}]
First, we prove that $C_1\mathbf L_m^s\ \le\ \max\{\la_m^c, \mathbf g_m\}$ for some constant $C_1 = C_1(p)$.
Let $f\in\mathbb X$, $A$ greedy set of $f$ with $|A| \le m$, $k\le |A|$, and $D:=\{ 1,\dots, k\}$. We decompose
$$f-P_A(f)\ =\ ((f-P_k(f))-P_{A\backslash D}(f-P_k(f)))+P_{D\backslash A}(f).$$

Since $A\backslash D$ is a greedy set of $f-P_k(f)$ and $|A\backslash D|\le m$,
\begin{equation}\label{four}
	\| (f-P_k(f))-P_{A\backslash D}(f-P_k(f))\| \ \le\  \mathbf g_m^c\| f-P_k(f)\|.
\end{equation}

Moreover, by Lemma \ref{convexity},

\begin{align}\label{five}
	\|P_{D\backslash A}(f)\| &\ \le \mathbf A_p\max_{n\in D\backslash A} |\xx_n^*(f)|\sup\{\|\Ind_{\varepsilon,D\backslash A}\| : \varepsilon\in\mathcal E_{D\backslash A}\} \nonumber\\
	&\ \le\ \mathbf A_p\min_{n\in A\backslash D}| \xx_n^*(f-P_k(f))|\sup\{\|\Ind_{\varepsilon,D\backslash A}\| : \varepsilon\in\mathcal E_{D\backslash A}\} \nonumber\\
	&\ \le\ \mathbf A_p\la_m^c\|f-P_k(f)\|,
\end{align}
where the last inequality is due to the fact that $A\backslash D$ is a greedy set of $f-P_k(f)$, $D\backslash A\le m$, $D\backslash A < \supp (f-P_k(f))$, and $|D\backslash A|\le |A\backslash D|\le m$.

By \eqref{four} and \eqref{five}, 
\begin{align*}\| f-P_A(f)\|^p &\ \le\  ((\mathbf g_m^c)^p+ \mathbf A_p^p(\la_m^c)^p)\| f-P_k(f)\|^p\\
&\ \le\ (1+\mathbf g_m^p + \mathbf A_p^p(\la_m^c)^p)\| f-P_k(f)\|^p\\
&\ \le\ (2\mathbf g_m^p + \mathbf A_p^p(\la_m^c)^p)\| f-P_k(f)\|^p;
\end{align*}
hence, $$\mathbf L_m^s\ \le\ (2+\mathbf A_p^p)^{1/p}\max\{ \mathbf g_m,\la_m^c\}.$$

We now prove that $\max\{\la_m^c, \mathbf g_m\}\  \le\ C_2\mathbf L_m^s$ for some $C_2 = C_2(p)$.
For $f\in\mathbb X$ and a greedy set $A$ with  $|A|\le m$, it follows from \eqref{partially} that
$$
	\| f-P_A(f)\| \ \le\ \mathbf L_m^s\| f\|,
$$
thus, 
\begin{equation}\label{three}\mathbf g_m^c\ \le\ \mathbf L_m^s.\end{equation} 
It follows that
\begin{eqnarray}\label{two}
	\mathbf g_m^p \ \le\ 1+(\mathbf g_m^c)^p \ \le\ 2(\mathbf L_m^s)^p,
\end{eqnarray}
so
$$\mathbf g_m \ \le\ 2^{1/p} \mathbf L_m^s.$$

Next, take $f\in\mathbb X$, $B$ a greedy set of $f$, $A\subseteq\mathbb N$ such that $A \le m$, $A< \supp (f)$, $|A|\le |B|\le m$, and $\varepsilon\in\mathcal E_A$. Choose $U \subset [1, \max A]\backslash A$ so that $\max A\le |B\cup U|\le m$. Set $t:=\min_{n\in B}|x_n^*(f)|$ and define $g:=f+t1_{\varepsilon, A} + t1_{U}$. Since $B\cup U$ is a greedy set of $g$, we have
\begin{align*}
	(t\|1_{\varepsilon, A}\|)^p&\ \le\ \| g-P_{B\cup U}(g)\|^p+\| f-P_B(f)\|^p\\
	&\ \le\ (\mathbf L_{m}^s\| g-P_{\max A}(g)\|)^p+ (\mathbf g_m^c)^p\| f\|^p\\
	&\ =\ ((\mathbf L_m^s)^p + (\mathbf g_m^c)^p)\| f\|^p\\
	&\ \le\  2(\mathbf L_m^s)^p \| f\|^p, \quad \mbox{ due to \eqref{three}}.
\end{align*}
Hence, 
$$\la_m^c\ \le\ 2^{1/p}\mathbf L_m^s.$$
This completes our proof. 
\end{proof}

To prove Theorem \ref{cheby}, we recall the \textit{truncation operator} $T_\alpha$ introduced in \cite{DKK2003}. For $\alpha>0$ and $y\in\mathbb F$,
$$
T_\alpha(y) \ :=\
\begin{cases}
	\alpha\sgn(y), & \mbox{ if } | y| > \alpha; \\
	y, & \mbox{ if } | y| \le \alpha.
\end{cases}
$$

Let $\varepsilon(f) := (\sgn(\xx_n^*(f)))_{n=1}^\infty$.
With an abuse of notation, for $f\in\XX$,
$$T_\alpha(f)\ :=\ \alpha\Ind_{\varepsilon(f), G_\alpha(f)}+(f-P_{G_\alpha(f)}(f)),$$
where $G_\alpha(f):=\{ n : | \xx_n^*(f)|>\alpha\}$. Since $G_\alpha(f)$ is a finite set, $T_\alpha(f)$ is well defined.

Define 
$$\mathbf r_m\ :=\ \sup\left\{ \frac{\min_{n\in A}|\xx_n^*(f)|\| \Ind_{\varepsilon(f),A}\|}{\| f\|}\, :\, f\in \XX\backslash \{0\}, A\mbox{ greedy set of }f,| A|\le m\right\}$$
and for $u > 0$, 
$$\eta_p(u)\ :=\ \min_{0 < t< 1}(1-t^p)^{-1/p}(1-(1+\mathbf A_p^{-1}u^{-1}t)^{-p})^{-1/p}.$$
The function first appeared in \cite[Theorem 4.8]{AABW2021}. As discussed in \cite[Remark 4.9]{AABW2021}, 
$$\eta_p(u)\ \approx\ u^{1/p},\quad u\ge 1.$$

For the proof of the following auxiliary result, we refer the readers to \cite[Theorem 4.8]{AABW2021} and the discussion on \cite[Page 14]{AAB2}.

\begin{theorem}\label{trunc}
Let $\mathcal X$ be a fundamental minimal system for a $p$-Banach space. Then
$$\mathbf r_m\ \le\ \mathbf g_m \eta_p(\mathbf g_m), \quad m\in \mathbb{N}.$$
\end{theorem}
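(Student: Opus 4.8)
\emph{Plan.} By definition $\mathbf r_m$ is the supremum of $\|\alpha\Ind_{\varepsilon(f),A}\|/\|f\|$, where $\alpha=\min_{n\in A}|\xx_n^*(f)|$, taken over all $f\in\XX\setminus\{0\}$ and all greedy sets $A$ of $f$ with $|A|\le m$. Hence it suffices to prove the pointwise estimate $\|\alpha\Ind_{\varepsilon(f),A}\|\le\mathbf g_m\,\eta_p(\mathbf g_m)\,\|f\|$ for each such pair $(f,A)$. I would fix such $f$ and $A$, write $\varepsilon=\varepsilon(f)$ and $y:=\alpha\Ind_{\varepsilon,A}$, and decompose $f=P_A(f)+v$ with $v:=f-P_A(f)$; since $A$ is a greedy set, $\supp(v)\cap A=\emptyset$ and $|\xx_n^*(v)|\le\alpha$ for all $n$.

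The first, elementary, step is a scaling-and-thresholding inequality. For each $t\in(0,1)$ the vector $g_t:=t^{-1}y+v$ has coefficients of modulus $\alpha/t$ on $A$ and of modulus $\le\alpha<\alpha/t$ off $A$, so $A$ is a greedy set of $g_t$ of order $\le m$. Applying the quasi-greedy parameter in the form $\|P_A(g_t)\|\le\mathbf g_m\|g_t\|$, and using $P_A(g_t)=t^{-1}y$, I obtain after multiplication by $t$ the basic bound $\|y\|\le\mathbf g_m\|y+tv\|$, valid for every $t\in(0,1)$. This isolates $\|y\|$ with the single admissible factor $\mathbf g_m$, which is precisely the outer factor $\mathbf g_m$ in the statement; it remains to control $\|y+tv\|$ by $\|f\|$.

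The core of the argument is this last estimate, and here the absence of unconditionality in a quasi-Banach space is the main obstacle: the excess of $f$ above the threshold, namely $f-T_\alpha(f)=\sum_{n\in G_\alpha(f)}(|\xx_n^*(f)|-\alpha)\sgn(\xx_n^*(f))\xx_n$, cannot be discarded at bounded cost, and a naive $p$-triangle estimate reintroduces $\|y\|$ with a coefficient $\ge 1$, which is fatal. The plan is instead to peel $f$ into geometric magnitude layers $A_k:=\{n: (1+c)^{k}\alpha\le|\xx_n^*(f)|<(1+c)^{k+1}\alpha\}$ with ratio governed by $c:=\mathbf A_p^{-1}\mathbf g_m^{-1}t$, to apply the basic inequality together with the convexity Lemma \ref{convexity} layer by layer (this is where $\mathbf A_p$ enters, when each flattened layer is compared with its sign vectors), and to sum the contributions in the $p$-norm. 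Because consecutive layers decay by the factor $(1+c)$ and the off-$A$ part is scaled by $t$, the two resulting $p$-series are exactly $\sum_{k\ge 0} t^{pk}$ and $\sum_{k\ge 0}(1+c)^{-pk}$, whose $p$-th roots are $(1-t^p)^{-1/p}$ and $(1-(1+\mathbf A_p^{-1}\mathbf g_m^{-1}t)^{-p})^{-1/p}$; these are precisely the two factors appearing in the definition of $\eta_p$. Thus for each $t\in(0,1)$ one reaches $\|y\|\le\mathbf g_m(1-t^p)^{-1/p}(1-(1+\mathbf A_p^{-1}\mathbf g_m^{-1}t)^{-p})^{-1/p}\|f\|$.

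Taking the infimum over $t\in(0,1)$ converts this into $\|y\|\le\mathbf g_m\,\eta_p(\mathbf g_m)\,\|f\|$, and the supremum over admissible $(f,A)$ yields $\mathbf r_m\le\mathbf g_m\,\eta_p(\mathbf g_m)$. I expect the layer-summation of the third step to be the delicate point: one must match the geometric ratio $1+c$ to the perturbation parameter $t$ so that the $\mathbf A_p$- and $\mathbf g_m$-losses compound into exactly the geometric series $(1-(1+\mathbf A_p^{-1}\mathbf g_m^{-1}t)^{-p})^{-1}$ rather than a lossier bound. This quantitative control of the truncation operator is exactly the content of \cite[Theorem 4.8]{AABW2021} invoked in the statement, to which the remaining bookkeeping can be delegated.
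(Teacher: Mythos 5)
First, a point of comparison: the paper does not prove Theorem \ref{trunc} at all --- it explicitly defers to \cite[Theorem 4.8]{AABW2021} together with the discussion in \cite{AAB2} for the localization to greedy sets of cardinality at most $m$. So there is no in-paper argument to match your proposal against; the question is whether your sketch is a self-contained proof. Your opening step is correct and is indeed the standard move: for $t\in(0,1)$ the set $A$ is a greedy set of $g_t=t^{-1}y+v$ of cardinality at most $m$, so $\|t^{-1}y\|=\|P_A(g_t)\|\le \mathbf g_m\|g_t\|$, which gives $\|y\|\le \mathbf g_m\|y+tv\|$ and cleanly accounts for the outer factor $\mathbf g_m$; moreover, since every auxiliary greedy set you use has cardinality at most $|A|\le m$, the localization to $\mathbf g_m$ (rather than a global quasi-greedy constant) goes through.

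The gap is the third step, which is where the entire content of the theorem lives and which you describe rather than execute. Two concrete problems. First, the individual layers $A\cap A_k$ with $A_k=\{n:(1+c)^{k}\alpha\le|\xx_n^*(f)|<(1+c)^{k+1}\alpha\}$ are not greedy sets of $f$ (only the nested unions $A\cap\bigcup_{j\ge k}A_j$ are), so neither $\mathbf g_m$ nor Lemma \ref{convexity} applies to a single layer directly; bounding $(1+c)^{k}\alpha\,\|\Ind_{\varepsilon,A\cap A_k}\|$ by a uniform multiple of $\|f\|$ is itself a truncation-type estimate, and as written your plan risks invoking the very statement being proved. Second, the claim that the $\mathbf A_p$- and $\mathbf g_m$-losses ``compound into exactly'' the two series $\sum_{k}t^{pk}$ and $\sum_{k}(1+c)^{-pk}$ with $c=\mathbf A_p^{-1}\mathbf g_m^{-1}t$ is precisely the delicate bookkeeping that produces $\eta_p$; nothing in the sketch derives it, and your closing sentence delegates it to \cite[Theorem 4.8]{AABW2021} --- that is, to the theorem you are supposed to be proving. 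The proposal is therefore an accurate reading of where the difficulty lies and of the shape of the known argument, but as a proof it is incomplete: in effect it is the same citation the paper makes, preceded by one correct reduction.
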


\begin{proof}[Proof of Theorem \ref{cheby}] Take $f\in\XX$, $A=A_m(f)$, and $y\in \spn(\XB)$ with $| \supp(y)|\le m$. Let $\alpha:=\max_{n\not\in A}|\xx_n^*(f)|$ and let
	$$g\ :=\ P_A(f)-P_{A}(T_\alpha(f-y)).$$
    We have
    \begin{align*}
        g\ =\ P_A(f-T_\alpha(f-y))&\ =\ (I-P_{\supp(y)\backslash A})(f-T_\alpha(f-y))\\
        & \ = \ f-T_\alpha(f-y) - P_{\supp(y)\backslash A}(f-T_\alpha(f-y)).
    \end{align*}
    Hence, 
    $$f-g\ =\ P_{\supp(y)\backslash A}(f-T_{\alpha}(f-y))+T_\alpha(f-y).$$

By Theorem \ref{trunc} and the fact that $G_{\alpha}(f-y)\subset A\cup\supp(y)$, 
	\begin{align*}
	\|T_\alpha(f-y)\|^p &\ \le\ \alpha^p\|\Ind_{\varepsilon(f-y),G_{\alpha}(f-y)}\|^p + \| P_{(G_{\alpha}(f-y))^c}(f-y)\|^p\\
    &\ \le\ \mathbf r^p_{2m}\|f-y\| + \mathbf (\g_{2m}^c)^p\|f-y\|\\ 
    &\ \le \ (\mathbf g^p_{2m}\eta^p_p(\mathbf g_{2m})+\mathbf (\g_{2m}^c)^p)\| f-y\|\\
    &\ \lesssim\ \mathbf g^{1+p}_{m} \|f-y\|.
\end{align*}

To bound $P_{\supp(y)\backslash A}(f-T_{\alpha}(f-y))$, choose a greedy set $D$ of $f-y$ with $|D| = |\supp(y)\backslash A|\le |A\backslash \supp (y)|$. Since $\min_{n\in A\backslash \supp(y)} |\xx^*_n(f-y)|\ge \alpha$, it follows that 
$$\min_{n\in D}|\xx^*_n(f-y)|\ \ge\ \alpha.$$
We have
\begin{align*}
\| P_{\supp (y)\backslash A} (f-T_\alpha(f-y)) \| &\ \le \ 2\mathbf A_p\alpha\sup_{|\eta|=1}\|\Ind_{\eta,\supp (y)\backslash A}\|\\
&\ \le\ 2\mathbf A_p\min_{n\in D}| \xx_n^*(f-y)|\sup_{|\eta|=1}\|\Ind_{\eta,\supp (y)\backslash A}\|\\
&\ \le\ 2\mathbf A_p \la_m \| f-y\|,
\end{align*}
where the last inequality is due to the fact that  $D$ is a greedy set of $f-y$ and 
$|D| = |\supp(y)\backslash A| \le m$.

This completes our proof that $\mathbf L_m^{ch}\lesssim \max\{\mathbf g^{1+1/p}_{m}, \la_m\}$.

If $\XX$ is a Banach space, the same proof works to show that
$$\mathbf L_m^{ch}\ \lesssim\ \max\{\mathbf g_{m},\la_m\}.$$
\end{proof}

\section{A technical result for Markushevich bases}\label{S4}

The following lemma will be used in due course.
\begin{lemma}\label{lemmaMarkushevich} Let $I$ and $J$ be nonempty sets and let $(\|\cdot\|_i)_{i\in I}$ and $(\|\cdot\|_j)_{j\in J}$ be families of semi-norms on $\mathtt{c}_{00}$ with the following properties: 
\begin{enumerate}
\item   for each $i\in I$, $\|\cdot\|_i$ is finitely supported; that is, there is a finite set $A_i\subset \NN$ such that if $a_n=0$ for each $n\in A_i$, then 
\begin{equation}\label{fsup}\|(a_n)_{n\in\NN}\|_i\ =\ 0;\end{equation}
\item there exists $K\ge 1$ such that if $A<B$ are finite subsets of $\NN$, $ (a_n)_{n\in\NN}$ is supported in $A$, and $(b_n)_{n\in\NN}$ is supported in $B$, then 
$$
\|(a_n)_{n\in\NN}\|_j\ \le\ K \| (a_n)_{n\in\NN}+(b_n)_{n\in\NN}\|_j,\quad j\in J. 
$$
\end{enumerate}
Define on $\mathtt{c}_{00}$ the semi-norm
$$
\|(a_n)_{n\in\NN}\|\ :=\ \max\{ \sup_{i\in I}\|(a_n)_{n\in\NN}\|_i , \sup_{j\in J}\| (a_n)_{n\in\NN}\|_j\}.
$$
Suppose that $\|\cdot\|$ is a norm. Let $\XX$ be the completion of $\mathtt{c}_{00}$ with respect to $\|\cdot\|$. If $\XB=(\xx_n)_{n\in\NN}$ is the canonical vector system of $\XX$, then $\XB$ is a Markushevich basis.
\end{lemma}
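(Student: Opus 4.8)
The plan is to verify the three defining conditions of a Markushevich basis from the excerpt directly for the canonical vector system $\XB=(\xx_n)_{n\in\NN}$, where $\xx_n$ is the canonical basis vector $\ee_n$ of $\mathtt{c}_{00}$ viewed inside its completion $\XX$. Recall these conditions: (i) $\overline{\spn}(\xx_n:n\in\NN)=\XX$; (ii) there is a unique dual system $(\xx_n^*)$ making $(\xx_n,\xx_n^*)$ biorthogonal; and (iii) the weak-star closure of $\spn(\xx_n^*)$ is all of $\XX^*$. Condition (i) is immediate: $\XX$ is by definition the completion of $\mathtt{c}_{00}=\spn(\xx_n:n\in\NN)$, so the span is dense.

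**For condition (ii)**, I would define $\xx_n^*$ to be the $n$-th coordinate functional on $\mathtt{c}_{00}$, i.e.\ $\xx_n^*((a_k)_k)=a_n$. The point requiring argument is that each $\xx_n^*$ extends to a bounded functional on $\XX$, which amounts to showing the coordinate functionals are continuous with respect to $\|\cdot\|$. This is exactly where hypothesis (2) (the $K$-suppression/left-dominance for the $\|\cdot\|_j$ seminorms) and the finite-support hypothesis (1) for the $\|\cdot\|_i$ seminorms are needed. Biorthogonality $\xx_n^*(\xx_k)=\delta_{nk}$ is built into the definition of coordinate functionals, and uniqueness of the dual system follows from density of $\spn(\xx_n)$ together with boundedness. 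So the crux of this step is the norm-boundedness estimate $|a_n|\le C_n\|(a_k)_k\|$.

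**The main obstacle** is precisely establishing this coordinate boundedness. I would argue as follows. Fix $n$. To bound $|a_n|$ in terms of $\|(a_k)_k\|$, I split the analysis according to whether there is some $j\in J$ or $i\in I$ controlling the $n$-th coordinate. For the $J$-seminorms, given $x=(a_k)_k$ supported on a finite set, I would decompose $x = x_{<n}+a_n\ee_n+x_{>n}$ where $x_{<n}$ is supported on $\{1,\dots,n-1\}$ and $x_{>n}$ on $\{n+1,\dots\}$; applying hypothesis (2) with the ``left'' part $A=\{1,\dots,n\}$ (supporting $x_{<n}+a_n\ee_n$) against the ``right'' part $B$ (supporting $x_{>n}$) gives $\|x_{<n}+a_n\ee_n\|_j\le K\|x\|_j\le K\|x\|$, and iterating the left-domination once more to peel off the single coordinate $a_n\ee_n$ from $x_{<n}$ (now with $A=\{n\}$, but note $\{n\}$ is not to the left of $\{1,\dots,n-1\}$, so one must instead use that removing later coordinates is controlled, applying (2) to the reversed roles is not available). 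The clean route is: hypothesis (2) directly gives $|a_n|\,\|\ee_n\|_j = \|a_n\ee_n\|_j \le K\|a_n\ee_n + x_{>n}\|_j$, and then one more application bounds $\|a_n\ee_n+x_{>n}\|_j\le K\|x\|_j$ by treating $x_{<n}$ as the left block; thus $|a_n|\,\|\ee_n\|_j\le K^2\|x\|$. Taking a supremum, $|a_n|\max(\sup_j\|\ee_n\|_j,\sup_i\|\ee_n\|_i)$ is positive because $\|\cdot\|$ is assumed to be a genuine norm (so $\|\ee_n\|>0$), giving $|a_n|\le K^2\|\ee_n\|^{-1}\|x\|$. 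This is the delicate bookkeeping step, and care is needed because hypothesis (2) only provides \emph{left} domination, so only the coordinate-plus-tail can be isolated cleanly.

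**To finish condition (iii)**, once each $\xx_n^*$ is bounded, I would observe that if $\phi\in\XX^*$ annihilates every $\xx_n^*$ in the relevant sense — more precisely, to show $\overline{\spn(\xx_n^*)}^{\,w^*}=\XX^*$, it suffices by a standard separation/bipolar argument to check that any $f\in\XX$ with $\xx_n^*(f)=0$ for all $n$ must be $0$. But $\xx_n^*(f)=0$ for all $n$ means all coordinates of $f$ vanish; since $f$ lies in the completion of $\mathtt{c}_{00}$ and the coordinate functionals separate points (again using $\|\ee_n\|>0$ and boundedness of the $\xx_n^*$), we get $f=0$. Hence the total family $(\xx_n^*)$ has weak-star dense span, establishing that $\XB$ is Markushevich. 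I would present the separation argument compactly, citing that a fundamental minimal system whose biorthogonal functionals are total is exactly a Markushevich basis.
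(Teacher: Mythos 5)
Your reduction of the Markushevich property to totality (that $\xx_n^*(f)=0$ for all $n$ forces $f=0$) is correct, but your verification of totality is circular: you assert that ``the coordinate functionals separate points'' of $\XX$, citing only $\|\ee_n\|>0$ and boundedness of the $\xx_n^*$. That separates points of $\mathtt{c}_{00}$, not of its completion. There exist fundamental minimal systems with perfectly bounded coordinate functionals that fail to be total (e.g.\ $\xx_n:=u_0+u_n$ in $\ell_2$ with dual system $u_n$, which annihilates $u_0\neq 0$), so boundedness alone cannot yield the conclusion. Totality is precisely the content of the lemma and the only place where hypotheses (1) and (2) must interact, and this is exactly the step your proposal skips. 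The paper's argument is a gliding-hump contradiction: if some $x\neq 0$ with $\|x\|=1$ had all coordinates zero, one extracts normalized blocks $y_n\in\spn(\XB)$ with $\supp(y_n)<\supp(y_{n+1})$ and $\|y_n-x\|\le 1/(4K)$; hypothesis (2) forces $\|y_n\|_j\le K\|y_n-y_{n+1}\|_j\le 1/2$ for every $j\in J$, so the unit norm of $y_n$ must be nearly attained by some finitely supported $\|\cdot\|_{i_n}$; after a further subsequence $\supp(y_{n+1})$ misses $A_{i_n}$, whence $\|y_n\|_{i_n}\le\|y_n-y_{n+1}\|\le 1/2$ as well, a contradiction. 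None of this mechanism appears in your proposal.

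There are also problems in your step (ii). Hypothesis (2) bounds the \emph{left} block by the whole vector, so $\|a_n\ee_n+x_{>n}\|_j\le K\|x\|_j$ is not a direct application; you need the detour $\|a_n\ee_n+x_{>n}\|_j\le\|x\|_j+\|x_{<n}\|_j\le(1+K)\|x\|_j$ (you half-acknowledge this but then state the wrong bound). More seriously, the resulting estimate $|a_n|\,\|\ee_n\|_j\le K(1+K)\|x\|$ involves only the $J$-seminorms, so dividing by $\|\ee_n\|=\max\{\sup_i\|\ee_n\|_i,\sup_j\|\ee_n\|_j\}$ is unjustified when that maximum is attained by an $I$-seminorm and $\sup_j\|\ee_n\|_j=0$: hypotheses (1)--(2) provide no suppression estimate whatsoever for the $\|\cdot\|_i$. (The paper does not address boundedness of the coordinate functionals in the lemma's proof either; in its applications this is immediate because $|a_k|$ appears among the constituent seminorms.)
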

\begin{proof}
Suppose the statement is false. Pick $x\in \XX$ so that $\|\xx_n\| = 1$, $\xx_n^*(x)=0$ for all $n\in\NN$, and a sequence $(x_n)_{n\in \NN}\subset \spn(\XB)$ with $\|x_n\| = 1$ and $\lim_{n\rightarrow \infty}\|x_n-x\| = 0$. Let $y_1:=x_1$. Since the $x_k$'s are in the span of $\XB$ and for each $n\in \mathbb{N}$, $\lim_{k\rightarrow\infty} \xx_n^*(x_k) = 0$,  there is $k_2>1$ such that 
$$\| P_{\{ 1, \dots, \max(\supp(y_1))\}}(x_{k_2})\|\ <\ \frac{1}{2^1}.$$ 
Let 
$$y_2\ :=\ \frac{x_{k_2}-P_{\{ 1, \dots, \max(\supp(y_1))\}}(x_{k_2})}{\| x_{k_2}-P_{\{ 1, \dots, \max(\supp(y_1))\}}(x_{k_2})\|}.$$ 
In general, supposing that $y_m$ has been constructed, choose $k_{m+1} > k_m$ such that
$$\| P_{\{ 1, \dots, \max(\supp(y_m))\}}(x_{k_{m+1}})\|\ <\ \frac{1}{2^m}.$$ 

We obtain a sequence $(y_n)_{n\in \NN}\subset \spn(\XB)$ with $\|y_n\| = 1$, $\lim_{n\rightarrow\infty}\|y_n-x\| = 0$, and $\supp(y_n)<\supp(y_{n+1})$ for each $n\in \NN$. Passing to a further subsequence, we may assume that $\| x-y_n\|\le 1/(4K)$ for all $n\in \NN$. Therefore, for each $j\in J$ and $n\in \NN$, we have
$$
\| y_n\|_j \ \le\ K \| y_n-y_{n+1}\|_j\ \le\  K \| x-y_{n+1}\|_j+K \| y_n-x\|_j\ \le\ \frac{1}{2}, 
$$
so there is $i_n\in I$ such that 
$$
\| y_n\|_{i_n}\ >\ 1-\frac{1}{2^n}.
$$
Using \eqref{fsup} and passing to a new subsequence $(y_m)_{m=1}^\infty$, we may assume that for each $n<m$, $\supp(y_m)>A_{i_n}$. Thus, for $n\in \NN$, we have
$$
1-\frac{1}{2^n}\ <\ \| y_n\|_{i_n}\ \le \ \| y_n-y_{n+1}\|_{i_n}\ \le\ \| y_n-x\|+\| x-y_{n+1}\|\ \le\ \frac{1}{2K}\ \le\ \frac{1}{2},
$$
a contradiction.
\end{proof}

\section{\texorpdfstring{The ratios of \(\boldsymbol{\lambda}_m\) and \(\la^d_m\)}{The ratio of lambda parameters}}\label{S5}

In this section, we compute the ratio functions between $\la_m$ and $\la_m^d$. We prove that for Schauder bases,
$\mathfrak{R}_S(\la_m, \la_m^d) = 1$, while for Markushevich bases and fundamental minimal systems, the corresponding ratios equal $2$.

\subsection{\texorpdfstring{Value of \(\mathfrak{R}_S(\la_{m}, \la^{d}_m)\)}{Value of R_S(lambda_m, lambda^d_m)}}

\begin{theorem}\label{t3v3}If $\XB$ is a Schauder basis with basis constant $K_b$ for a $p$-Banach space $\XX$, there exists an absolute constant $C = C(\XB, \XX)$ such that $\la_m \le C\la^d_m$. In particular, 
$$
\la_m\ \le\ ((\la_m^d)^p (2K_b^p+1)+2K_b^p )^{\frac{1}{p}}, \quad m\in \NN. 
$$
\end{theorem}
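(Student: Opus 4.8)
The plan is to unwind the definition of $\la_m$: it suffices to fix $f\in\XX$, a greedy set $B$ of $f$, a set $A$ with $|A|=|B|\le m$ and signs $\varepsilon\in\EE_A$, and to bound $t\|\Ind_{\varepsilon,A}\|$ by the asserted right-hand side times $\|f\|$, where $t:=\min_{n\in B}|\xx_n^*(f)|$ (note $t$ is simply the $|A|$-th largest coefficient modulus of $f$, so it does not depend on which greedy set is chosen). The only difficulty is the overlap $A\cap\supp(f)$, and the whole mechanism is already transparent in the one-coordinate case $A=\{a\}\subseteq\supp(f)$. If the maximal coefficient of $f$ is not attained solely at $a$, then $f-\xx_a^*(f)\xx_a$ still carries a top coefficient of modulus $t$ off $a$; since $\xx_a^*(f)\xx_a=P_a(f)-P_{a-1}(f)$ is an interval projection, that function has norm at most $(1+2K_b^p)^{1/p}\|f\|$, and applying $\la_m^d$ to it produces the term $(\la_m^d)^p(2K_b^p+1)$. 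Otherwise $t=|\xx_a^*(f)|$ and $t\|\xx_a\|=\|P_a(f)-P_{a-1}(f)\|\le (2K_b^p)^{1/p}\|f\|$ directly, which is exactly the source of the additive $2K_b^p$. So I expect the general proof to be a faithful upgrade of this dichotomy.

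For general $A$ the key device is a single two-sided partial-sum projection. I would set $c:=\min A$, $d:=\max A$ and consider $Qf:=f-(P_{d}(f)-P_{c-1}(f))$, the function obtained by deleting the block $[c,d]\supseteq A$. Then $A\cap\supp(Qf)=\emptyset$, and $p$-subadditivity together with $\|P_d\|,\|P_{c-1}\|\le K_b$ gives $\|Qf\|^p\le \|f\|^p+2K_b^p\|f\|^p=(1+2K_b^p)\|f\|^p$; this is precisely where the factor $2K_b^p+1$ originates. Splitting the top-$|A|$ coefficient set as $B=B_{\mathrm{out}}\sqcup B_{\mathrm{in}}$ with $B_{\mathrm{out}}=B\setminus[c,d]$ and $B_{\mathrm{in}}=B\cap[c,d]$, the indices in $B_{\mathrm{out}}$ survive in $Qf$ and still have modulus $\ge t$, so $B_{\mathrm{out}}$ is a greedy set of $Qf$ of minimum $\ge t$. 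When $|B_{\mathrm{out}}|\ge|A|$ (the ``good'' regime), $\la_m^d$ applied to $Qf$ with indicator $A$ and this greedy set, after replacing the minimum by the smaller $t$, yields $t\|\Ind_{\varepsilon,A}\|\le \la_m^d\|Qf\|\le \la_m^d(1+2K_b^p)^{1/p}\|f\|$, which alone already matches the leading term. The portion of $A$ lying off $\supp(f)$ is handled the same way, since a top subset of $B$ of the right size is a greedy set of minimum $\ge t$ for $f$ itself.

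The hard part will be the regime in which the large coefficients of $f$ are concentrated inside $[c,d]$, so that deleting the block destroys too many of them and $Qf$ no longer carries a greedy set of size $|A|$ with minimum $\ge t$. This is the multi-index analogue of the second alternative above, and it must be closed by a \emph{direct} partial-sum estimate rather than a second appeal to $\la_m^d$ — reintroducing $\la_m^d$ here (for instance through a fresh disjoint copy placed past the support) would cost a factor $\la_m^d$ inside the norm and collapse the bound back to the quadratic estimate $(\la_m^d)^2$ known for general bases. The intended resolution is to peel off the indices of $A$ that fall on $\supp(f)$ inside $[c,d]$ and bound their indicator contribution against the block projection $P_{d}(f)-P_{c-1}(f)$, whose norm is at most $(2K_b^p)^{1/p}\|f\|$, passing from the prescribed signs $\varepsilon$ to the signs of $f$ at the controlled cost $\mathbf A_p$ furnished by Lemma \ref{convexity}, while reserving $\la_m^d$ for the genuinely disjoint remainder. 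Keeping these two regimes cleanly separated, checking at each step that the surviving set really is a greedy set of minimum at least $t$, and verifying that no interval-projection constant ever exceeds $2K_b^p$, is the only substantive obstacle; combining the two contributions through the $p$-norm and taking the supremum over all admissible configurations then returns $\la_m^p\le (\la_m^d)^p(2K_b^p+1)+2K_b^p$.
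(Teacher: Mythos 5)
Your ``good regime'' is fine, but the argument does not close in precisely the case that carries the content of the theorem, and the proposed fix for that case cannot work. The problematic configuration is when most (or all) of $B$ lies inside $[c,d]$, so that $Qf$ has no greedy set of size $|A|$ with minimum $\ge t$, \emph{and} the indices of $A\cap[c,d]$ lie on $\supp(f)$ with nonzero but very small coefficients. You propose to bound $t\,\|\Ind_{\varepsilon,A\cap\supp(f)\cap[c,d]}\|$ directly against the block projection $P_d(f)-P_{c-1}(f)$, passing between sign patterns via Lemma \ref{convexity}. This goes in the wrong direction: Lemma \ref{convexity} bounds a linear combination with coefficients of modulus $\le 1$ by $\sup_\eta\|\Ind_{\eta,\cdot}\|$, never the reverse, and the coefficients $\xx_a^*(f)$ for $a\in A$ are not bounded below by anything comparable to $t$ --- in the definition of $\la_m$ the set $A$ is arbitrary, so $|\xx_a^*(f)|/t$ can be arbitrarily small. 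Hence no projection of $f$ controls $t\|\Ind_{\varepsilon,A\cap\supp(f)}\|$, and your one-coordinate dichotomy (where the overlap index is forced to satisfy $|\xx_a^*(f)|=t$) does not upgrade to $|A|>1$. Your own warning that reintroducing $\la_m^d$ here would collapse the bound to $(\la_m^d)^2$ is accurate, but the ``direct partial-sum estimate'' you reserve for this regime is not available.

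The paper's proof avoids this trap with a different decomposition: instead of deleting a block containing $A$, it chooses the crossing index $n_1:=\min\{n:|I_n\cap B|\ge|I_n^c\cap A|\}$ and \emph{cross-pairs} the pieces, estimating $\Ind_{\varepsilon,A\cap I_{n_1}^c}$ against the greedy set $B\cap I_{n_1}$ of $x_L=P_{I_{n_1}}(f)$ and $\Ind_{\varepsilon,A\cap I_{n_1}}$ against the greedy set $B\setminus I_{n_1}$ of $x_R=f-x_L$. Because $A\cap I_{n_1}^c$ is automatically disjoint from $\supp(x_L)$ and $A\cap I_{n_1}$ from $\supp(x_R)$, the disjoint parameter $\la_m^d$ applies to \emph{both} pieces regardless of how $A$ meets $\supp(f)$; the minimality of $n_1$ guarantees the cardinalities match up to one exceptional index $n_1\in A\cap B$, which is the only place a direct estimate $\|\xx_{n_1}^*(f)\xx_{n_1}\|\le 2^{1/p}K_b\|f\|$ is needed (and there it is legitimate, since $n_1\in B$ forces $|\xx_{n_1}^*(f)|\ge t$). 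You would need to replace your block-deletion step with some such cardinality-balancing device to complete the proof.
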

\begin{proof}
Fix $f\in \XX$ with finite support, $m\in \NN$, $A\subset \NN$ with $|A|\le m$, $\varepsilon\in \EE_A$, and $B\in G(f,| A|)$. Let $\alpha:=\min_{n\in B}| \xx_n^*(f)|$. If $|A|=1$, and there is $n\in \NN$ such that $A=B=\{n\}$, then
\begin{align*}\alpha^p\|\Ind_{\varepsilon, A}\|^p\ =\ \| \xx_n^*(f)\xx_n\|^p&\ \le\ \| P_{\{1, \ldots, n\}}(f)\|^p + \|P_{\{1, \ldots, n-1\}}(f)\|^p\\
&\ \le\ K_b^p \|f\|^p + K_b^p\|f\|^p\ =\ 2K_b^p \|f\|^p,
\end{align*}
so $$\alpha\|\Ind_{\varepsilon, A}\|\ \le\ 2^{1/p} K_b \|f\|.$$

If $|A|=1$ and $A\cap B=\emptyset$, let $B=\{n\}$. Since $B\in G(f, 1)$, we have 
$$
\alpha\|\Ind_{\varepsilon, A}\|\ =\ | \xx_n^*(f)| \|\Ind_{\varepsilon, A}\|\ \le\ \la_{1}^d\| P_{\{n\}}(f)\|\ \le\ 2^{\frac{1}{p}}K_b\la_{m}^d\|f\|. 
$$

If $|A|>1$, let 
$$
n_1\ :=\ \min\{ {n\in \NN}\,:\, |I_n\cap B|\ge |I_n^c\cap A|\}
$$
and define 
\begin{align*}
l_1   &\ :=\ | I_{n_1}\cap B|, 
&\quad r_1   &\ :=\ | I_{n_1}^c\cap A|;\\
x_{L} &\ :=\ P_{I_{n_1}}(f), 
&\quad x_{R} &\ :=\ f - P_{I_{n_1}}(f);\\
A_{L} &\ :=\ A\cap I_{n_1}, 
&\quad A_{R} &\ :=\ A\backslash A_{L};\\
B_{L} &\ :=\ B\cap I_{n_1}, 
&\quad B_{R} &\ :=\ B\backslash B_{L}.
\end{align*}
Note that $B_{L}\in G(x_{L})$ and $B_{R}\in G(x_{R})$. Also, it follows from definitions that $r_1\le l_1\le r_1+1$. We proceed by case analysis. 

Case 1: If $l_1=r_1$, then $|A_{R}|=r_1=l_1=|B_{L}|$, so $|A_{L}|=|A|-r_1=|B_{R}|$. Hence, 
$$
\min_{n\in B_{L}}| \xx_n^*(f)| \|\Ind_{\varepsilon, A_{R}}\| \ \le\ \la_{l_1}^d\| x_{L}\|\ \le\ \la_{m}^d K_b\| f\|
$$
and 
$$
\min_{n\in B_{R}}| \xx_n^*(f)|\|\Ind_{\varepsilon, A_{L}}\| \ \le\ \la_{|A|-r_1}^d\| x_{R}\|\ \le\ \la_{m}^d (1+K_b^p)^{\frac{1}{p}}\| f\|. 
$$
Thus, by the $p$-triangle inequality, we obtain 
\begin{align*}
\alpha \|\Ind_{\varepsilon, A}\|&\ \le\ \left(\min_{n\in B_{L}}| \xx_n^*(f)|^p \|\Ind_{\varepsilon, A_{R}}\|^p+ \min_{n\in B_{R}}| \xx_n^*(f)|^p\|\Ind_{\varepsilon, A_{L}}\|^p\right)^{\frac{1}{p}}\\
&\ \le\ \la_{m}^d(1+2K_b^p)^{\frac{1}{p}}\| f\|.
\end{align*}

Case 2: If $r_1=l_1-1$, then there is a nonnegative $n_2<n_1$ such that $|B\cap I_{n_2}|=|A_R|$. Since $B\cap I_{n_2}\in G(P_{I_{n_2}}(x))$, we have
$$
\alpha \|\Ind_{\varepsilon, A_{R}}\|\ \le\ \la_{m}^d \| P_{I_{n_2}}(f)\|\ \le\ \la_{m}^d K_b\| f\|.
$$
The minimality of $n_1$ and the fact that $l_1>r_1$ together imply that $n_1\in A\cap B$. Hence, 
$$
\| \alpha \varepsilon_{n_1}\xx_{n_1}\|\ \le\ \|\xx_{n_1}^*(f)\xx_{n_1}\|\ \le\ 2^{\frac{1}{p}}K_b \| f\|. 
$$
Let $A_{L,L}:=A_{L}\backslash \{ n_1\}$. 
Then 
$$|A_{L, L}| \ =\ |A_L| - 1 \ =\ |A| - (r_1 + 1) \ =\ |B| - l_1 \ =\ |B_R|.$$
Hence,
$$
\alpha  \|\Ind_{\varepsilon, A_{L,L}}\|\ \le\ \la_m^d\| x_{R}\| \ \le\ \la_m^d(1+K_b^p)^{\frac{1}{p}}\| f\|. 
$$
From the above estimates and the $p$-triangle inequality, we obtain 
$$
\alpha  \|\Ind_{\varepsilon, A}\|\ \le\  ((\la_m^d)^p (2K_b^p+1)+2K_b^p)^{\frac{1}{p}}\|f\|.
$$ 
\end{proof}

\begin{corollary}\label{corolario?}
We have 
$$\mathfrak{R}_S(\la_{m}, \la^{d}_m)\ =\ 1.$$
\end{corollary}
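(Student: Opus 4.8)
The plan is to prove the two inequalities $\mathfrak{R}_S(\la_m, \la_m^d) \le 1$ and $\mathfrak{R}_S(\la_m, \la_m^d) \ge 1$ separately, following the template of Example \ref{exa1}. The upper bound is immediate from the theorem just proved: Theorem \ref{t3v3} supplies, for every Schauder basis $\XB$ of a $p$-Banach space $\XX$, a constant $C = C(\XB, \XX)$ with $\la_m \le C\la_m^d$ for all $m$. Hence $R = 1$ lies in $U_S(\la_m, \la_m^d)$, which forces $\mathfrak{R}_S(\la_m, \la_m^d) \le 1$.

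For the reverse inequality I would produce a single Schauder basis along which the ratio degenerates, namely the summing basis $\XB = (\xx_n)_n$ of $\mathtt{c}_0$ already used in Remark \ref{remarkimprovement}. First I would verify that this basis has $\la_m^d \to \infty$. Take $f := \sum_{n=1}^{2m}(-1)^n \xx_n$, so that $\|f\| = 1$ and $|\xx_n^*(f)| = 1$ for every $n \le 2m$; then $B := \{1, \dots, m\}$ is a greedy set of $f$ of order $m$ with $\min_{n \in B}|\xx_n^*(f)| = 1$, while $A := \{2m+1, \dots, 3m\}$ taken with all signs equal to $+1$ is disjoint from $\supp(f)$ and satisfies $\|\Ind_A\| = m$. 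Since $|A| = |B| = m$ and $A \cap \supp(f) = \emptyset$, the defining inequality for $\la_m^d$ gives $\la_m^d \ge m$.

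With $\la_m^d \ge m$ in hand, the lower bound on the ratio follows formally, exactly as in Example \ref{exa1}. Using the elementary inequality $\la_m^d \le \la_m$ (the disjoint supremum ranges over a smaller family of tuples), for any fixed $\varepsilon \in (0,1)$ we get
$$\frac{\la_m}{(\la_m^d)^\varepsilon}\ \ge\ \frac{\la_m^d}{(\la_m^d)^\varepsilon}\ =\ (\la_m^d)^{1-\varepsilon}\ \ge\ m^{1-\varepsilon}\ \to\ \infty,$$
so $\varepsilon \notin U_S(\la_m, \la_m^d)$ and therefore $\mathfrak{R}_S(\la_m, \la_m^d) \ge \varepsilon$. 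Letting $\varepsilon \uparrow 1$ yields $\mathfrak{R}_S(\la_m, \la_m^d) \ge 1$, and combined with the upper bound this gives the claimed equality.

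The only genuinely non-formal step, and hence the main obstacle, is the estimate $\la_m^d \gtrsim m$ for the summing basis; this is essentially handled by the norm identities recorded in Remark \ref{remarkimprovement} (alternatively, one could invoke $\tilde{\boldsymbol\mu}_m \lesssim (\la_m^d)^2$ together with $\tilde{\boldsymbol\mu}_m \gtrsim m$ for that basis). Everything else is a direct unwinding of the definition of the ratio function $\mathfrak{R}_S$.
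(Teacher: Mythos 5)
Your proof is correct and follows essentially the same route as the paper: the upper bound is read off from Theorem \ref{t3v3}, and the lower bound is the formal ratio argument of Example \ref{exa1} applied to a Schauder basis along which the relevant parameter diverges. The only difference is the witness --- the paper takes an arbitrary non-democratic Schauder basis and uses $\boldsymbol{\mu}_m \le \la_m \to \infty$, whereas you verify $\la_m^d \ge m$ directly for the summing basis of $\mathtt{c}_0$ (which is in fact democratic, so it could not serve as the paper's witness, but your direct computation of $\la_m^d$ is valid and makes the argument self-contained).
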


\begin{proof}
Theorem \ref{t3v3} implies that $\mathfrak{R}_S(\la_{m}, \la^{d}_m)\le 1$. To see that $\mathfrak{R}_S(\la_{m}, \la^{d}_m)\ge 1$, we choose a Schauder basis which is not democratic. Then $\lim_{m\rightarrow\infty} \boldsymbol{\mu}_m = \infty$. Since $\boldsymbol\mu_m \le \la_m$, it follows that $\lim_{m\rightarrow\infty}\la_m = \infty$. Use the same reasoning as in Example \ref{exa1} to complete the proof. 
\end{proof}

\begin{remark}\label{remarkdemoc}\normalfont The proof of Theorem \ref{t3v3} with only minor modifications can be used to estimate $\boldsymbol\mu_m$ and $\tilde{\boldsymbol\mu}_m$ in terms of $K_b$ and $\boldsymbol\mu_m^d$ and $\tilde{\boldsymbol\mu}^d_m$ respectively, and the same estimate holds in this case. This extends \cite[Theorem 5.2 and Remark 5.3]{BBGHO20} to $p$-Banach spaces, with a different estimate for the case $p=1$. 
\end{remark}

\subsection{\texorpdfstring{Value of \(\mathfrak{R}_B(\la_{m}, \la^{d}_m)\) and \(\mathfrak{R}_M(\la_{m}, \la^{d}_m)\)}{Value of R_B and R_M}}\label{subsectionR_Blambda}

We compute  
\begin{itemize}
    \item $\mathfrak{R}_B(\la_{m}, \la^{d}_m)$ and $\mathfrak{R}_M(\la_{m}, \la^{d}_m)$;
    \item $\mathfrak{R}_B(\boldsymbol\mu_{m}, \la^{d}_m)$ and $\mathfrak{R}_M(\boldsymbol\mu_{m}, \la^{d}_m)$;
    \item $\mathfrak{R}_B(\tilde{\boldsymbol\mu}_{m}, \la^{d}_m)$ and $\mathfrak{R}_M(\tilde{\boldsymbol\mu}_{m}, \la^{d}_m)$;
    \item $\mathfrak{R}_B(\boldsymbol{\nu}_{m},\la_m^d)$ and $\mathfrak{R}_M(\boldsymbol{\nu}_{m},\la_m^d)$,
\end{itemize}
 giving a negative answer to \cite[Question 8.4]{AAB2}. 

\begin{proposition}\label{propositionmulambda}There is a space $\XX$ with a Markushevich basis $\XB$ for which 
\begin{equation}
\limsup_{m\to \infty}\frac{\boldsymbol\mu_{m}}{(\la_{m}^d)^{2-\varepsilon}}\ =\ \infty.\label{toprovemumlabdaalt}
\end{equation}
\end{proposition}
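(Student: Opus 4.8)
The plan is to split the statement into two essentially independent pieces and finish with a short arithmetic. The first piece is a Markushevich basis realizing the largest possible gap between $\boldsymbol{\mu}_m$ and $\boldsymbol{\mu}_m^d$: following \cite[Theorem 5.4]{BBGHO20} (whose construction can be reproduced on $\mathtt{c}_{00}$ through Lemma \ref{lemmaMarkushevich}), I would take a basis for which $\limsup_{m\to\infty}\boldsymbol{\mu}_m/(\boldsymbol{\mu}_m^d)^{2-\varepsilon}=\infty$. The second, and genuinely new, piece is the estimate $\la_m^d\lesssim\boldsymbol{\mu}_m^d$ for that same basis. Since the reverse inequality $\boldsymbol{\mu}_m^d\le\la_m^d$ always holds (test \eqref{spg} with $f=\Ind_B$ and $\varepsilon\equiv 1$), the two bounds give $\la_m^d\approx\boldsymbol{\mu}_m^d$, whence $(\la_m^d)^{2-\varepsilon}\lesssim(\boldsymbol{\mu}_m^d)^{2-\varepsilon}$ and therefore $\boldsymbol{\mu}_m/(\la_m^d)^{2-\varepsilon}\gtrsim\boldsymbol{\mu}_m/(\boldsymbol{\mu}_m^d)^{2-\varepsilon}$, from which \eqref{toprovemumlabdaalt} follows by taking $\limsup$.

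For the first piece I would only need to record, for infinitely many $m$, an explicit \emph{heavy} set $A$ and \emph{light} set $B$ of the same cardinality witnessing $\boldsymbol{\mu}_m\gtrsim(\boldsymbol{\mu}_m^d)^{2-o(1)}$, and to verify the Markushevich property through the two hypotheses of Lemma \ref{lemmaMarkushevich}: hypothesis (1) from the localization of the finitely supported seminorms, and hypothesis (2) from the conservative seminorms built into the construction.

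The crux is the bound $\la_m^d\lesssim\boldsymbol{\mu}_m^d$, and this is where I expect the real work. Given $f$, a set $A$ with $A\cap\supp(f)=\emptyset$ and $\varepsilon\in\EE_A$, and a greedy set $B$ of $f$ with $|A|=|B|\le m$, write $\alpha:=\min_{n\in B}|\xx_n^*(f)|$; the goal is $\alpha\|\Ind_{\varepsilon,A}\|\lesssim\boldsymbol{\mu}_m^d\|f\|$. The first reduction replaces $\Ind_{\varepsilon,A}$ by $\Ind_{A}$, at the cost of the unconditionality-for-constant-coefficients constant, which for a quasi-greedy basis is controlled by $\mathbf g_m$ (equivalently, $\tilde{\boldsymbol{\mu}}_m^d\approx\boldsymbol{\mu}_m^d$). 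The second reduction passes from $\alpha$ and $\|f\|$ to the part of $f$ on $B$: since $B$ is a greedy set, the coefficients of $f$ on $B$ have modulus at least $\alpha$, and $B$ is disjoint from $A$, so a truncation estimate in the spirit of Theorem \ref{trunc} together with disjoint super-democracy bounds $\alpha\|\Ind_A\|$ by a constant multiple of $\tilde{\boldsymbol{\mu}}_m^d\|P_B(f)\|\lesssim\boldsymbol{\mu}_m^d\|f\|$. Concretely, I would route this through the disjoint, almost-greedy analogue of Theorem \ref{oldbound}, namely $\la_m^d\lesssim\mathbf L_m^a\lesssim\mathbf g_m\,\tilde{\boldsymbol{\mu}}_m^d+\mathbf g_{2m}^c$, and then check, for the chosen basis, that $\mathbf g_m$ stays bounded (the basis is quasi-greedy) while $\tilde{\boldsymbol{\mu}}_m^d\approx\boldsymbol{\mu}_m^d$. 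The main obstacle is precisely this verification: one must confirm that the construction realizing the democracy gap is quasi-greedy and unconditional for constant coefficients, so that the squeeze-symmetry parameter collapses to plain disjoint democracy. Absent quasi-greediness, I would instead bound $\la_m^d$ by testing the defining quantity $\alpha\|\Ind_{\varepsilon,A}\|$ against each seminorm of the construction separately, using $A\cap\supp(f)=\emptyset$ and the greedy-set hypothesis on $B$ to absorb every contribution into $\boldsymbol{\mu}_m^d\|f\|$.

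Finally, combining the known divergence $\limsup_m\boldsymbol{\mu}_m/(\boldsymbol{\mu}_m^d)^{2-\varepsilon}=\infty$ with $\la_m^d\lesssim\boldsymbol{\mu}_m^d$ and $2-\varepsilon>0$ gives $\boldsymbol{\mu}_m/(\la_m^d)^{2-\varepsilon}\gtrsim\boldsymbol{\mu}_m/(\boldsymbol{\mu}_m^d)^{2-\varepsilon}$, whose $\limsup$ is $\infty$, which is exactly \eqref{toprovemumlabdaalt}. The same space is the natural candidate for the remaining identities in Theorem \ref{m4}, since $\boldsymbol{\nu}_m\ge\tilde{\boldsymbol{\mu}}_m\ge\boldsymbol{\mu}_m$ and the denominator $\la_m^d$ is common to all of them.
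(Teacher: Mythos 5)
Your high-level plan --- exhibit a basis with $\limsup_m \boldsymbol\mu_m/(\boldsymbol\mu_m^d)^{2-\varepsilon}=\infty$ and then prove $\la_m^d\lesssim\boldsymbol\mu_m^d$ for that same basis --- is structurally the same as the paper's (which builds an explicit space, gets $\boldsymbol\mu_{2n_k}\gtrsim n_k/m_k^2$ from a heavy/light pair, and proves $\la_{n_k}^d\le 5\sqrt{n_k}$ by testing $\|\Ind_{\varepsilon,A}\|$ against each seminorm with a three-case analysis on where the greedy set $B$ sits). But the route you propose for the crucial bound $\la_m^d\lesssim\boldsymbol\mu_m^d$ is a dead end, not merely an obstacle. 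If the basis were quasi-greedy, then for any $|A|=|B|\le m$ one splits $A=(A\cap B)\sqcup(A\setminus B)$: since $A\cap B$ is a greedy set of $\Ind_B$, quasi-greediness gives $\|\Ind_{A\cap B}\|\le \mathbf g_m\|\Ind_B\|$ and $\|\Ind_{B\setminus A}\|\le\mathbf g_m^c\|\Ind_B\|$, while $\|\Ind_{A\setminus B}\|\le\boldsymbol\mu_m^d\|\Ind_{B\setminus A}\|$ because $A\setminus B$ and $B\setminus A$ are disjoint of equal cardinality. Hence quasi-greediness forces $\boldsymbol\mu_m\lesssim 1+\boldsymbol\mu_m^d$, which makes $\boldsymbol\mu_m/(\boldsymbol\mu_m^d)^{2-\varepsilon}$ \emph{bounded} for every $\varepsilon\in(0,1]$. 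So no quasi-greedy basis can witness the numerator's growth, and the chain $\la_m^d\lesssim\mathbf L_m^a\lesssim\mathbf g_m\tilde{\boldsymbol\mu}_m^d+\mathbf g_{2m}^c$ with $\mathbf g_m$ bounded is unavailable in principle.

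That leaves only your fallback --- ``bound $\la_m^d$ by testing the defining quantity against each seminorm of the construction separately'' --- which is not an argument but a restatement of the entire technical content of the proof. It is also not clear that the off-the-shelf construction of \cite[Theorem 5.4]{BBGHO20} admits the bound $\la_m^d\lesssim\boldsymbol\mu_m^d$ at all: controlling $\la_m^d$ requires controlling $\min_{n\in B}|\xx_n^*(f)|\,\|\Ind_{\varepsilon,A}\|$ against $\|f\|$ for \emph{arbitrary} $f$ with greedy set $B$, which is much more than a democracy statement. This is precisely why the paper does not cite an existing democracy-gap example but designs a new norm, with the seminorms $Q_k$ built from the sign families $\Delta_k$ (conditions 1--3) so that whichever of the three regions $I_{n_k}$, $n_k+I_{n_k}$, $I_{2n_k}^c$ absorbs at least a fixed fraction of $B$, one can manufacture a $\delta\in\Delta_k$ witnessing $\|x\|\gtrsim Q_k(\Ind_{\varepsilon,A_2})/\sqrt{n_k}$. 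Without carrying out that construction and case analysis (or verifying it for some concrete space), the proof is not complete. The small slip that $\eqref{spg}$ defines $\la_m^c$ rather than $\la_m^d$ is harmless; the inequality $\boldsymbol\mu_m^d\le\tilde{\boldsymbol\mu}_m^d\le\la_m^d$ you invoke is indeed correct by testing the definition of $\la_m^d$ with $f=\Ind_{\eta,B}$.
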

\begin{proof}
Choose two sequences $(n_k)_{k=1}^\infty\subset 2\NN$ and $(m_k)_{k = 1}^\infty\subset \NN_{>5}$ such that, for each $k\in \NN$,
$$
2m_k^{4k}\ <\ n_k\ <\ \frac{m_{k+1}}{2}.
$$
Set $n_0=m_0 =0$. We define on $\mathtt{c}_{00}$
$$
D_k((a_n)_{n\in\NN})\ :=\ \frac{1}{m_k}\sup_{\substack{A\subset \NN\\ A>2n_k\\ 1\le |A|\le n_k}}\frac{1}{\sqrt{|A|} }\sum_{n\in A}| a_n|, \quad k\in \mathbb{N}. 
$$

Let $\Delta_k$ be the set of finite sequences $\delta=(\delta_j)_{1\le j\le 2n_k}$ with the following properties: 
\begin{enumerate}[\rm 1.]
\item $\left|\sum_{j=1}^{2n_k}\delta_j \right|\le 1$. 
\item For $1\le j\le 2n_k$, $ | \delta_j|\le 1$. 
\item For each $A\subset I_{n_k}$, $\sum_{j\in A}| \delta_j| \le \sqrt{|A|}$. 
\end{enumerate}
Define on $\mathtt{c}_{00}$  
$$
Q_k((a_n)_{n\in\NN})\ :=\ \frac{1}{m_k}\left| \sup_{\delta \in \Delta_k}\sum_{n=1}^{2n_k}\delta_n  a_n \right|, \quad k\in \mathbb{N}.
$$

Let $\XX$ be the completion of $\mathtt{c}_{00}$ with the norm
$$
\| (a_n)_{n\in\NN}\|\ :=\ \sup_{k\in \NN}\{ | a_k|, D_k((a_n)_{n\in\NN}), Q_k((a_n)_{n\in\NN})\}, 
$$
and let $\XB$ be the canonical unit vector system of $\XX$.  Then $\XB$ and $\XB^*$ are both normalized, and that $\XB$ is a Markushevich basis by Lemma \ref{lemmaMarkushevich}. 

To see that \eqref{toprovemumlabdaalt} holds, first we claim that
\begin{equation}
Q_k(x)\ =\ \frac{1}{m_k}\sum_{n\in \{n_k+1, \ldots, 2n_k\}}| \xx_n^*(x)|, 
\label{claim1mu}
\end{equation}
for all $$x\in \spn\{ \xx_n: n\ge n_k + 1\}\mbox{ with } |\{n_k + 1, \ldots, 2n_k\}\cap \supp(x)|\ \le\ \frac{n_k}{2};$$
furthermore,
\begin{equation}
Q_k(x)\ \ge\ \frac{1}{m_k\sqrt{n_k}}\sum_{n\in \NN}| \xx_n^*(x)|,\quad x\in \spn\{ \xx_n: n\in I_{n_k}\}.\label{claim2mu}
\end{equation}
To prove \eqref{claim1mu}, first note that the inequality
$$
Q_k(x)\ \le \ \frac{1}{m_k}\sum_{n\in \{n_k+1, \ldots, 2n_k\}}| \xx_n^*(x)|
$$
is immediate from the definition of $Q_k$. Now pick $B\subset \{n_k+1, \ldots, 2n_k\}\backslash \supp(x)$ with $|B|= | \{n_k+1, \ldots, 2n_k\}\cap \supp(x)|$. Let $\sigma:B\rightarrow \{n_k+1, \ldots, 2n_k\}\cap\supp(x)$ be a bijection and define for each $1\le n\le 2n_k$,
$$
\delta_{n}\ :=\ 
\begin{cases}
\frac{|\xx_n^*(x)|}{\xx_n^*(x)}, &\mbox{ if } n\in \supp(x); \\
-\frac{ |\xx_{\sigma(n)}^*(x)|}{\xx_{\sigma(n)}^*(x)},  &\mbox{ if } n\in B;\\
0, &\mbox{ otherwise}. 
\end{cases}
$$
It is routine to check that $\delta=(\delta_n)_{1\le n\le 2n_k}\in \Delta_k$ and that
$$
\sum_{n=1}^{2n_k}\delta_n \xx_n^*(x)\ =\ \sum_{n\in \{n_k+1, \ldots, 2n_k\}}| \xx_n^*(x)|, 
$$
which completes the proof of \eqref{claim1mu}. The proof of \eqref{claim2mu} is similar: for $1\le n\le 2n_k$, we define
$$
\delta_{n}\ :=\
\begin{cases}
\frac{|\xx_n^*(x)|}{\sqrt{n_k}\xx_n^*(x)}, &\mbox{ if } n\in \supp(x); \\
-\frac{|\xx_{n-n_k}^*(x)|}{\sqrt{n_k}\xx_{n-n_k}^*(x)}, &\mbox{ if } n-n_k\in \supp(x);\\
0, & \mbox{ otherwise},
\end{cases}
$$
and \eqref{claim2mu} follows.

Now fix $k\ge 2$. To find a lower bound for $\boldsymbol\mu_{n_k}$, we estimate $\|\Ind_{\{1,\ldots, 2n_k\}}\|$. We have 
$$
Q_k(\Ind_{\{1, \ldots, 2n_k\}})\ \le\ \frac{1}{m_k} \quad\mbox{ and }\quad D_k(\Ind_{\{1, \ldots, 2n_k\}})\ =\ 0. 
$$
For $1\le l<k$, 
$$
\max\{D_l(\Ind_{\{1,\ldots, 2n_k\}}), Q_l(\Ind_{\{1,\ldots, 2n_k\}})\}\ \le\ 2n_{k-1}\ \le\ m_k, 
$$
whereas for $l>k$, 
$$
Q_l(\Ind_{\{1,\ldots, 2n_k\}})\ \le\ \frac{2n_k}{m_{l}}\ \le\  1 \quad \mbox{ and }\quad D_l(\Ind_{\{1,\ldots, 2n_k\}})\ =\ 0. 
$$
Now let $A:=\{n_k+1, \ldots, 3n_k/2\}\cup \{2n_k+1,\ldots, 7n_k/2\}$. By \eqref{claim1mu}, we have
$$
\| \Ind_{A}\|\ \ge\ Q_k(\Ind_{A})\ =\ \frac{n_k}{2m_k}. 
$$
Therefore,
\begin{equation}
\boldsymbol\mu_{2n_k}\ \ge\ \frac{\|\Ind_A\|}{\|\Ind_{\{1,\ldots, 2n_k\}}\|}\ \ge\ \frac{n_k}{2m_k^2}. \label{mu1}
\end{equation}

Next, we find an upper bound for $\la_{n_k}^d$. To this end, fix $x\in \XX$, $1\le m\le n_k$, $B\in G(x,m)$, $A\in \NN^{(m)}$ with $\supp(x)\cap A=\emptyset$, and $\varepsilon\in \EE_A$. We may assume by scaling that $\min_{n\in B}|\xx_n^*(x)|=1$. For each $l>k$, we have
$$
\max\{ Q_l(\Ind_{\varepsilon, A}), D_l(\Ind_{\varepsilon, A})\}\ \le\ \frac{n_k}{m_l}\ \le\ 1\ \le\ \| x\|. 
$$
On the other hand, for $1\le l<k$, 
$$
\max\{ Q_l(\Ind_{\varepsilon, A}), D_l(\Ind_{\varepsilon, A})\}\ \le\ 2n_{l}\ \le\ m_k\ \le\ m_k \| x\|. 
$$
Moreover, 
$$
D_k(\Ind_{\varepsilon, A})\ \le\ \frac{\sqrt{n_k}}{m_k}\ \le\  \frac{\sqrt{n_k}}{m_k}\| x\|.
$$
To find an upper bound for $Q_k(\Ind_{\varepsilon, A})$, let $A_1:=A\cap I_{n_k}$ and $A_2:=A\cap (n_k+I_{n_k})$. We have
$$
Q_k(\Ind_{\varepsilon, A})\ \le\ Q_k(\Ind_{\varepsilon, A_1})+Q_k(\Ind_{\varepsilon, A_2})\ \le\ \frac{\sqrt{n_k}}{m_k}\|x\|+Q_k(\Ind_{\varepsilon, A_2}).
$$
If $|A_2|>0$, we proceed by case analysis.

Case 1: If $| B\cap I_{2n_k}^c| \ge |A_2|/4$, then 
$$
\| x\|\ \ge\ D_k(x)\ \ge\ D_k(P_B(x))\ \ge\ \frac{\sqrt{|A_2|}}{2m_k}\ \ge\ \frac{Q_k(\Ind_{\varepsilon, A_2})}{2\sqrt{|A_2|}}\ \ge\ \frac{Q_k(\Ind_{\varepsilon, A_2})}{2\sqrt{n_k}}. 
$$

Case 2: If $| B_1:=B\cap I_{n_k}| \ge |A_2|/4$, choose $A_3\subset A_2$ and $B_3\subset B_1$ so that $|A_3|= |B_3|\ge |A_2|/4$ and define 
$$
\delta_n\ =\ 
\begin{cases}
\frac{| \xx_n^*(x)|}{\sqrt{n_k}\xx_n^*(x)}, & \mbox{ if } n\in B_3;\\
0, & \mbox{ if } n\in I_{n_k}\backslash B_3;\\
0, & \mbox{ if } n\in (n_k+I_{n_k})\backslash A_3. 
\end{cases}
$$
Since $|A_3| = |B_3|$, one can choose $(\delta_n)_{n\in A_3}$ so that $\delta=(\delta_n)_{n=1}^{2n_k}\in \Delta_k$. It follows from $A\cap \supp(x) = \emptyset$ that 
$$
\|x\|\ \ge\ Q_k(x)\ \ge\ \frac{1}{m_k} \sum_{n=1}^{2n_k}\delta_n \xx_n^*(x)\ \ge \ \frac{|A_2|}{4m_k\sqrt{n_k}}\ \ge\  \frac{Q_k(\Ind_{\varepsilon, A_2})}{4\sqrt{n_k}}.
$$

Case 3: If $| B_2:=B\cap (n_k+I_{n_k})| \ge |A_2|/2$, choose $A_4\subset A_2$ and $B_4\subset B_2$ with $|A_4|=|B_4|\ge |A_2|/2$. Let $\sigma: A_4\rightarrow B_4$ be a bijection and define for $1\le n\le 2n_k$,
$$
\delta_n\ =\ 
\begin{cases}
\frac{| \xx_n^*(x)|}{\xx_n^*(x)}, &\mbox{ if } n\in B_4;\\
-\frac{| \xx_{\sigma(n)}^*(x)|}{\xx_{\sigma(n)}^*(x)}, &\mbox{ if } n\in A_4;\\
0, &\mbox{ if } n\in (I_{2n_k})\backslash (B_4\cup A_4). 
\end{cases}
$$
Then $\delta=(\delta_n)_{n\in I_{2n_k}}\in \Delta_k$, so
$$
\|x\|\ \ge\ Q_k(x)\ \ge\ \frac{1}{m_k} \sum_{n=1}^{2n_k}\delta_n \xx_n^*(x)\ \ge\ \frac{|A_2|}{2m_k}\ \ge\ \frac{Q_k(\Ind_{\varepsilon, A_2})}{2}.
$$

We have considered all cases because $|B| \ge |A_2|$, and the combination of the above estimates yield 
\begin{equation}
\| \Ind_{\varepsilon, A}\|\ \le\ 5\sqrt{n_k}\|x\|,\label{lambdad}
\end{equation}
so 
$$\la_{n_k}^{d}\ \le\ 5\sqrt{n_k}.$$
For $\varepsilon>0$, it follows from \eqref{mu1} and \eqref{lambdad} that 
$$
\frac{\boldsymbol\mu_{2n_k}}{ (\la_{2n_k}^d)^{2-\varepsilon}}\ \ge\ \frac{n_k^{\varepsilon/2}}{100m_k^2} \ \xrightarrow[k\to \infty]{}\ \infty.  
$$
This completes the proof of \eqref{toprovemumlabdaalt} and of the proposition. 
\end{proof}
It follows at once from definitions that 
$$
\boldsymbol{\mu}_m\ \le\ \tilde{\boldsymbol{\mu}}_m\ \le\  \la_m \quad \mbox{ and }\ \boldsymbol{\mu}_m\ \le\ \boldsymbol{\nu}_m.
$$
Additionally, by \cite[Lemma 4.1, Inequalities (8.6), and Question 8.4]{AAB2}, 
$$
\max\{ \la_m, \boldsymbol{\nu}_m\} \ \lesssim\ ( \la_m^d)^2.
$$
Combining these inequalities with Proposition \ref{propositionmulambda}, we obtain the following.

\begin{corollary}\label{corollaryRMetcmulambda} We have
\begin{itemize}
    \item $\mathfrak{R}_B(\la_{m}, \la^{d}_m) = \mathfrak{R}_M(\la_{m}, \la^{d}_m) = 2$;
    \item $\mathfrak{R}_B(\boldsymbol\mu_{m}, \la^{d}_m) = \mathfrak{R}_M(\boldsymbol\mu_{m}, \la^{d}_m) = 2$;
    \item $\mathfrak{R}_B(\tilde{\boldsymbol\mu}_{m}, \la^{d}_m) = \mathfrak{R}_M(\tilde{\boldsymbol\mu}_{m}, \la^{d}_m) = 2$;
    \item $\mathfrak{R}_B(\boldsymbol{\nu}_{m},\la_m^d) = \mathfrak{R}_M(\boldsymbol{\nu}_{m},\la_m^d) = 2$.
\end{itemize}
\end{corollary}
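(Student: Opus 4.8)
The plan is to derive the corollary formally from Proposition \ref{propositionmulambda} together with the two displayed inequality blocks immediately preceding it, so that no new construction is required; all of the genuine work already sits inside the Markushevich basis built in that proposition.

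First I would dispose of the upper bounds. The cited estimate $\max\{\la_m, \boldsymbol\nu_m\}\lesssim(\la_m^d)^2$ holds for every fundamental minimal system, and combined with $\boldsymbol\mu_m\le\tilde{\boldsymbol\mu}_m\le\la_m$ it shows that each of $\boldsymbol\mu_m,\tilde{\boldsymbol\mu}_m,\la_m,\boldsymbol\nu_m$ is $\lesssim(\la_m^d)^2$ with a constant independent of the space. Hence the exponent $2$ belongs to each set $U_B(\cdot,\la_m^d)$, so $\mathfrak{R}_B\le 2$ for all four pairings, and by Remark \ref{remarkrelparam} also $\mathfrak{R}_M\le\mathfrak{R}_B\le 2$.

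For the matching lower bounds I would fix $\varepsilon>0$ and invoke Proposition \ref{propositionmulambda}, which supplies a Markushevich basis with $\limsup_m \boldsymbol\mu_m/(\la_m^d)^{2-\varepsilon}=\infty$. Using the pointwise inequalities $\boldsymbol\mu_m\le\tilde{\boldsymbol\mu}_m\le\la_m$ and $\boldsymbol\mu_m\le\boldsymbol\nu_m$ term by term, the same $\limsup$ stays infinite when $\boldsymbol\mu_m$ is replaced by any of $\tilde{\boldsymbol\mu}_m$, $\la_m$, or $\boldsymbol\nu_m$. Consequently, for this basis no finite constant can bound $\sup_m \mathbf a_m/(\la_m^d)^{2-\varepsilon}$, so $2-\varepsilon\notin U_M(\mathbf a_m,\la_m^d)$ for each of the four choices of $\mathbf a_m$; since $U_B\subseteq U_M$, the exponent $2-\varepsilon$ fails for $\mathfrak{R}_B$ as well. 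Letting $\varepsilon\downarrow 0$ yields $\mathfrak{R}_M\ge 2$ and $\mathfrak{R}_B\ge 2$, and combined with the upper bounds all eight quantities equal $2$.

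The only delicate points are bookkeeping rather than substance: one should record that $\la_m^d\ge 1$, so that smaller exponents are automatically worse (as in the remark following the definition of the ratio functions) and the value is genuinely an infimum approached from above; and one should note that transferring the divergent $\limsup$ through the monotonicity chain is legitimate precisely because those inequalities hold for every fixed $m$ on the same basis. I do not anticipate a real obstacle here, as the entire difficulty of the statement has been absorbed into the construction and estimates of Proposition \ref{propositionmulambda}.
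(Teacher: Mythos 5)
Your proposal is correct and follows essentially the same route as the paper, which likewise obtains the upper bounds from $\boldsymbol{\mu}_m\le\tilde{\boldsymbol{\mu}}_m\le\la_m$, $\boldsymbol{\mu}_m\le\boldsymbol{\nu}_m$, and $\max\{\la_m,\boldsymbol{\nu}_m\}\lesssim(\la_m^d)^2$, and the lower bounds by pushing the divergent ratio of Proposition \ref{propositionmulambda} through the same monotonicity chain. Your extra bookkeeping about $\la_m^d\ge 1$ and the infimum being approached from above is a sound (and welcome) explicit justification of what the paper leaves implicit.
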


\begin{remark}\label{remarkQ4}\normalfont Corollary \ref{corollaryRMetcmulambda} shows that none of the estimates in \cite[Question 8.4]{AAB2} can be improved. 
\end{remark}

\section{\texorpdfstring{The ratios with \(\la_m\), \(\mathbf L_m^a\), and \(\bm{\nu}_m\)}{The ratio with la_m, L_m^a, and nu_m}}\label{S6}
Recall from \cite[Page 23]{AAB2} that
\begin{equation}\label{e1}
\la_m\ \lesssim\  
\begin{cases}
(\mathbf L_m^a)^2, & \mbox{ if } p=1;\\
(\mathbf L_m^a)^{\left(2+\frac{1}{p}\right)}, &\mbox{ if } p<1.
\end{cases}
\end{equation}

The authors asked whether these bounds can be improved and, in particular, whether the inequality $\la_m \lesssim \mathbf L_m^a$ holds. They conjectured that the answer was negative (see \cite[Question 8.3]{AAB2} and its discussion). 

In this section, we first prove that for $p\in (0,1]$, $\la_m\lesssim (\mathbf L_m^a)^2$, which improves \eqref{e1} in the case $p < 1$. We then show that while $\mathfrak R_S(\la_m, \mathbf L_m^a) = 1$, it holds that
$\mathfrak R_B(\la_m, \mathbf L_m^a) = \mathfrak R_M(\la_m, \mathbf L_m^a) = 2$.

\begin{lemma}\label{lemmanotp}Let $\XB$ be a fundamental minimal system for a $p$-Banach space $X$. Then there is $C = C(p)\ge 1$ such that 
$$\la_m\ \le\ \tilde{\boldsymbol\mu}_m^d \la_m^d\ \le\  C(p) (\mathbf L_m^a)^2.$$ 
\end{lemma}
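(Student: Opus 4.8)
The plan is to prove the two inequalities in the statement separately. For the left-hand inequality $\la_m \le \tilde{\boldsymbol\mu}_m^d\la_m^d$, I would use a disjointification trick: starting from an arbitrary configuration realizing the defining supremum of $\la_m$, I replace the index set $A$ (which may overlap $\supp(f)$) by a fresh set $A'$ of the same cardinality disjoint from $\supp(f)$, paying one factor of $\tilde{\boldsymbol\mu}_m^d$ when I swap $A$ for $A'$ and then invoking $\la_m^d$ on the now-disjoint configuration. For the right-hand inequality $\tilde{\boldsymbol\mu}_m^d\la_m^d \le C(p)(\mathbf L_m^a)^2$, I would reduce everything to $\la_m^d$ by first observing the elementary comparison $\tilde{\boldsymbol\mu}_m^d \le \la_m^d$, and then bounding $\la_m^d \lesssim \mathbf L_m^a$ using the equivalence \eqref{ch2}.

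For the first inequality, fix $f$, a finite set $A$, a sign $\varepsilon \in \mathcal E_A$, and a greedy set $B$ of $f$ with $|A|=|B|\le m$, and write $\alpha := \min_{n\in B}|\xx_n^*(f)|$ (the case $\alpha = 0$ being trivial, so that $B \subseteq \supp(f)$). A routine perturbation first reduces matters to $f \in \spn(\XB)$: replacing $f$ by $P_F(f)$ for a large finite $F \supseteq B$ leaves $\alpha$ unchanged and keeps $B$ a greedy set, while $\|P_F(f)\| \to \|f\|$ by continuity of the quasi-norm. With $\supp(f)$ now finite, I choose $A' \subset \NN$ with $|A'| = |A|$ and $A' \cap (A \cup \supp(f)) = \emptyset$, and fix any $\eta \in \mathcal E_{A'}$. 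Since $A \cap A' = \emptyset$ and $|A|=|A'|\le m$, super-democracy gives $\|\Ind_{\varepsilon,A}\| \le \tilde{\boldsymbol\mu}_m^d\|\Ind_{\eta,A'}\|$; since $A' \cap \supp(f) = \emptyset$ and $B$ is a greedy set of $f$ with $|A'|=|B|\le m$, the disjoint squeeze symmetry parameter gives $\alpha\|\Ind_{\eta,A'}\| \le \la_m^d\|f\|$. Multiplying the first estimate by $\alpha$ and chaining yields $\alpha\|\Ind_{\varepsilon,A}\| \le \tilde{\boldsymbol\mu}_m^d\la_m^d\|f\|$, and taking the supremum over all admissible configurations gives $\la_m \le \tilde{\boldsymbol\mu}_m^d\la_m^d$.

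For the second inequality, I would first establish $\tilde{\boldsymbol\mu}_m^d \le \la_m^d$: given disjoint $A,B$ with $|A|=|B|\le m$, $\varepsilon \in \mathcal E_A$, and $\eta \in \mathcal E_B$, apply the definition of $\la_m^d$ to the test vector $f := \Ind_{\eta,B}$, for which $B$ is a greedy set of order $|B|=|A|$, $\min_{n\in B}|\xx_n^*(f)| = 1$, and $A \cap \supp(f) = \emptyset$; this yields $\|\Ind_{\varepsilon,A}\| \le \la_m^d\|\Ind_{\eta,B}\|$, hence $\tilde{\boldsymbol\mu}_m^d \le \la_m^d$ and therefore $\tilde{\boldsymbol\mu}_m^d\la_m^d \le (\la_m^d)^2$. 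Finally, \eqref{ch2} furnishes a constant $C(p)$ with $\la_m^d \le \max\{\la_m^d,\mathbf g_m\} \le C(p)\mathbf L_m^a$, so $(\la_m^d)^2 \le C(p)^2(\mathbf L_m^a)^2$, which closes the chain.

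The only genuinely delicate point is the reduction to finitely supported $f$ in the first inequality, since that is exactly what guarantees the existence of a disjoint comparison set $A'$ when $\supp(f)$ might otherwise be infinite; I expect this, rather than the two applications of the defining inequalities, to be the main obstacle to make rigorous. Everything else is bookkeeping of the $p$-dependent constants, which are absorbed into the cited estimate \eqref{ch2} via the implicit $p$-triangle inequality.
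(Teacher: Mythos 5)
Your argument for the first inequality is exactly the paper's: for finitely supported $f$ one chooses a set of the same cardinality disjoint from $A\cup\supp(f)$, pays a factor $\tilde{\boldsymbol\mu}_m^d$ to swap indicator sums, and then invokes $\la_m^d$ on the now-disjoint configuration. For the second inequality you route through $\tilde{\boldsymbol\mu}_m^d\le\la_m^d$ (your test vector $f=\Ind_{\eta,B}$ is valid) and apply \eqref{ch2} twice, whereas the paper uses the equally elementary bound $\tilde{\boldsymbol\mu}_m^d\le\mathbf L_m^a$ together with a single application of \eqref{ch2}; both yield a constant depending only on $p$, so the difference is immaterial.

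The one step that does not work as written is your reduction to finitely supported $f$. For a general fundamental minimal system the projections $P_F(f)$ need not converge to $f$ as $F$ grows --- that convergence is essentially a Schauder/quasi-greedy type property, which is not assumed here --- so the claim that $\|P_F(f)\|\to\|f\|$ ``by continuity of the quasi-norm'' has no justification, since $P_F(f)\not\to f$ in general. You correctly identified this as the delicate point; the paper resolves it by citing a perturbation lemma (\cite[Lemma 3.4]{BBG2025}) which approximates $f$ by elements of $\spn(\XB)$ while preserving the greedy-set structure. With that substitution in place of your $P_F$ argument, your proof is complete and coincides with the paper's.
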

\begin{proof}
Given $x\in \XX$ with finite support, $A, B\subset \NN$, $|A|=|B|=k\le m$, $B\in G(x,k)$, and $\varepsilon\in \EE_A$, choose $D>A\cup \supp(x)$ with $|D|=k$. We have  
$$
\min_{n\in B}| \xx_n^*(x)|\| \Ind_{\varepsilon, A}\|\ \le\ \tilde{\boldsymbol\mu}_m^d \min_{n\in B}| \xx_n^*(x)|\| \Ind_{D}\|\ \le\  \tilde{\boldsymbol\mu}_m^d\la_m^d \| x\|. 
$$
Hence, $\la_m\le \tilde{\boldsymbol\mu}_m^d \la_m^d$. 

It is clear from definitions that $\tilde{\boldsymbol\mu}_m^d\le \mathbf L_m^a$, and it follows from \cite[Theorem 4.2]{AAB2} that $\la_m^d\le C(p) \mathbf L_m^a$, so the proof is complete for finitely supported elements. An application of \cite[Lemma 3.4]{BBG2025} allows us to extend the result to arbitrary $x\in \XX$. 
\end{proof}

To compute the ratios in the section title and answer \cite[Question 8.3]{AAB2}, we construct a Markushevich basis for which 
\begin{equation}
\limsup_{m\to \infty}\frac{\la_m}{(\mathbf L_m^a)^{2-\varepsilon}}\ =\ \infty,\label{toprovelmalabda}
\end{equation}
which proves the optimality of $\la_m\lesssim (\mathbf L_m^a)^2$.

For the proof, we  use \eqref{ch2} and modify the construction in the proof of Proposition \ref{propositionmulambda}. The proof is slightly more involved as we need to give upper bounds for the quasi-greedy parameters. 

\begin{proposition}\label{propositionLmalambda}There is a Banach space $\XX$ with a Markushevich basis $\XB$ for which 
\begin{equation*}
\limsup_{m\to \infty}\frac{\la_m}{(\mathbf L_m^a)^{2-\varepsilon}}\ =\ \infty.
\end{equation*}
\end{proposition}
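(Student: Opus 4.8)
The plan is to adapt the construction from the proof of Proposition \ref{propositionmulambda}, keeping the same building blocks $D_k$ and $Q_k$ but modifying the underlying sequences and adding a control mechanism so that the quasi-greedy parameter $\mathbf g_m$ stays small (polynomially bounded, ideally like $\sqrt{n_k}$ up to logarithmic factors), while $\la_m$ remains of order $\sqrt{n_k}$. The key identity \eqref{ch2}, namely $\mathbf L_m^a \approx \max\{\la_m^d, \mathbf g_m\}$, reduces the task to bounding $\la_m^d$ and $\mathbf g_m$ from above and $\la_m$ from below; then if both $\la_m^d$ and $\mathbf g_m$ are $O(\sqrt{n_k})$ while $\la_{2n_k} \gtrsim n_k/m_k^2$ (inherited from the democracy estimate, since $\boldsymbol\mu_m \le \la_m$), one gets $\la_m/(\mathbf L_m^a)^{2-\varepsilon} \gtrsim n_k^{\varepsilon/2}/\mathrm{poly}(m_k) \to \infty$ along a suitable subsequence.

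The first set of steps I would carry out is to verify, exactly as before via Lemma \ref{lemmaMarkushevich}, that the canonical system is a Markushevich basis; here the seminorms $D_k, Q_k$ are again finitely supported and satisfy the suppression-type inequality in hypothesis (2), so this is essentially unchanged. Next I would re-derive the lower bound for $\la_m$: since $\boldsymbol\mu_m \le \la_m$ and the vectors $\Ind_A$ and $\Ind_{\{1,\dots,2n_k\}}$ witness a democracy gap of order $n_k/m_k^2$ by the same computation using \eqref{claim1mu}, the lower bound $\la_{2n_k} \gtrsim n_k/m_k^2$ transfers directly. The upper bound $\la_m^d \le 5\sqrt{n_k}$ should also survive, possibly with a different constant, from the same three-case analysis (Cases 1--3) applied to the disjoint parameter.

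The genuinely new and hardest part will be controlling the quasi-greedy parameter $\mathbf g_m$, which did not need to be estimated in Proposition \ref{propositionmulambda}. The remark preceding the proposition already signals this: one must show that greedy projections $P_A(f)$ are bounded by roughly $\sqrt{n_k}\|f\|$. The difficulty is that $Q_k$ is a supremum over the sign-constrained set $\Delta_k$, and greedy truncation can interact badly with the $\ell_1$-type and $\ell_2$-type constraints encoded in conditions (1)--(3) defining $\Delta_k$. The strategy would be to estimate $\|P_A(f)\|$ by bounding each seminorm $D_l(P_A(f))$ and $Q_l(P_A(f))$ separately: for $D_l$ the monotonicity under coordinate restriction is immediate, while for $Q_l$ one argues that any admissible $\delta$ tested against $P_A(f)$ can be matched against a modified (possibly rescaled) $\delta'$ tested against $f$, at the cost of a factor controlled by $\sqrt{n_k}$, using that the dropped coordinates of $f$ have modulus comparable to the kept ones on a greedy set. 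I would likely need to choose the gaps between $n_k$ and $m_{k+1}$ even more generously than $2m_k^{4k} < n_k < m_{k+1}/2$ to absorb cross-scale interactions.

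Once $\mathbf g_m \lesssim \sqrt{n_k}$ and $\la_m^d \lesssim \sqrt{n_k}$ are in hand, \eqref{ch2} gives $\mathbf L_m^a \lesssim \sqrt{n_k}$ at $m = 2n_k$, and combining with the lower bound yields \eqref{toprovelmalabda}. The final step, packaging this with Lemma \ref{lemmanotp} and Corollary-style arguments as in Example \ref{exa1}, then establishes $\mathfrak R_B(\la_m, \mathbf L_m^a) = \mathfrak R_M(\la_m, \mathbf L_m^a) = 2$, completing the proof of Theorem \ref{main3}; the Schauder value $\mathfrak R_S(\la_m, \mathbf L_m^a) = 1$ follows from Theorem \ref{t3v3} together with the chain $\la_m^d \lesssim \mathbf L_m^a$ and $\mathfrak R_S(\la_m, \la_m^d) = 1$ from Corollary \ref{corolario?}.
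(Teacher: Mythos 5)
Your outline matches the paper's strategy at the top level (modify the construction of Proposition \ref{propositionmulambda}, use \eqref{ch2} to reduce to bounding $\la_m^d$ and $\mathbf g_m$ by $\sqrt{n_k}$ while keeping $\la_{n_k}\gtrsim n_k/m_k^2$), but the proposal has a genuine gap precisely where the new idea is needed, and one of your concrete claims is false. First, you cannot keep ``the same building blocks $D_k$ and $Q_k$'': in the space of Proposition \ref{propositionmulambda}, taking $f=\Ind_{\{1,\ldots,2n_k\}}$ (all coefficients equal, so every subset is a greedy set) and projecting onto a set $B\subset\{n_k+1,\ldots,2n_k\}$ of cardinality $n_k/2$ gives, by \eqref{claim1mu}, $\|P_B(f)\|\ge n_k/(2m_k)$ while $\|f\|\le m_k$, so $\mathbf g_{2n_k}\ge n_k/(2m_k^2)\approx\la_{2n_k}$ and the ratio $\la_m/(\mathbf L_m^a)^{2-\varepsilon}$ cannot blow up. The paper's fix is to change condition (1) in the definition of $\Delta_k$ to the \emph{weighted} cancellation condition $|\sum_{n=1}^{n_k}(\delta_n+n_k^{-1/2}\delta_{n+n_k})|\le 1$; this is the control mechanism you allude to but never supply, and the entire $\mathbf g_m$ estimate (the splitting $B_2=B_{2,1}\sqcup B_{2,2}$, the set $D$ of small coordinates, and the two-case analysis) rests on it.

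Second, your claim that the lower bound for $\la_m$ ``transfers directly'' via the democracy gap between $\Ind_A$ and $\Ind_{\{1,\ldots,2n_k\}}$ fails in any construction where $\mathbf g_m$ is actually small, and in particular in the paper's modified space: with the weighted condition one may take $\delta_n=-n_k^{-1/2}$ on $I_{n_k}$ and $\delta_{n+n_k}=1$, which is admissible and shows $\|\Ind_{\{1,\ldots,2n_k\}}\|\ge(n_k-\sqrt{n_k})/m_k$, so the constant-coefficient democracy gap disappears. The paper instead witnesses the lower bound for $\la_{n_k}$ with the non-constant vector $x=\Ind_{I_{n_k}}+n_k^{-1/2}\Ind_{\{n_k+1,\ldots,2n_k\}}$, whose norm is $\approx m_k$ exactly because of the weighted condition, together with the greedy set $I_{n_k}$ and $\|\Ind_{\{n_k+1,\ldots,2n_k\}}\|\ge n_k/m_k$. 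So the two halves of your plan (retain the old democracy witnesses; make $\mathbf g_m$ small) pull in opposite directions, and reconciling them is the actual content of the proof, which the proposal does not provide.
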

\begin{proof}
Choose two sequences $(n_k)_{k = 1}^\infty\subset \NN$ and $(m_k)_{k = 1}^\infty\subset \NN_{>5}$ such that, for each $k\in \NN$,
$$
m_k^{4k}\ <\ n_k\ <\ \frac{m_{k+1}}{2}.
$$
Set $n_0=m_0 =0$ and define on $\mathtt{c}_{00}$
$$
D_k((a_n)_{n\in\NN})\ :=\ \frac{1}{m_k}\sup_{\substack{A\subset \NN\\ A>2n_k\\ 1\le |A|\le n_k}}\frac{1}{\sqrt{|A|} }\sum_{n\in A}|a_n|, \quad k\in\mathbb{N}. 
$$

 Let $\Delta_k$ be the set of finite sequences $\delta=(\delta_n)_{1\le n\le 2n_k}$ with the following properties: 
\begin{enumerate}[\rm 1.]
\item $\left|\sum_{n=1}^{n_k}\left(\delta_n+ \frac{1}{\sqrt{n_k}}\delta_{n+n_k}\right) \right|\le 1$. 
\item For each $1\le n\le 2n_k$, $ | \delta_n|\le 1$. 
\item For each $A\subset I_{n_k}$, $\sum_{n\in A}| \delta_n| \le \sqrt{|A|}$. 
\end{enumerate}
Define on $\mathtt{c}_{00}$  
$$
Q_k((a_n)_{n\in\NN})\ :=\ \frac{1}{m_k}\left| \sup_{\delta \in \Delta_k}\sum_{n=1}^{2n_k}\delta_n  a_n \right|. 
$$

Let $\XX$ be the completion of $\mathtt{c}_{00}$ with the norm
$$
\| (a_n)_{n\in\NN}\|\ :=\ \sup_{k\in \NN}\{| a_k|, D_k((a_n)_{n\in\NN}), Q_k((a_n)_{n\in\NN})\}
$$
and let $\XB$ be the canonical unit vector system of $\XX$.  Note that $\XB$ and $\XB^*$ are both normalized and that $\XB$ is a Markushevich basis by Lemma \ref{lemmaMarkushevich}. 

To see that \eqref{toprovelmalabda} holds, by \cite[Theorem 4.2]{AAB2}, it suffices to show that for each $\varepsilon>0$, 
\begin{align}
\lim_{k\to \infty}\frac{\la_{n_k}}{\max\{ \la_{n_k}^d, \mathbf g_{n_k}\}^{2-\varepsilon}}\ =\ \infty.\label{toprovelmalabdaalt}
\end{align}
To this end, first we claim that
\begin{equation}
Q_k(x)\ =\ \frac{1}{m_k}\sum_{n\in \NN}| \xx_n^*(x)|, \quad x\in \spn\{ \xx_n: n\in \{n_k + 1, \ldots, 2n_k\}\}\label{claim1}
\end{equation}
and 
\begin{equation}
Q_k(x)\ \ge\ \frac{1}{m_k\sqrt{| n_k|}}\sum_{n\in \NN}| \xx_n^*(x)|, \quad x\in \spn\{ \xx_n: n\in I_{n_k}\}.\label{claim2}
\end{equation}
To prove \eqref{claim1}, first note that the inequality
$$
Q_k(x)\ \le \ \frac{1}{m_k}\sum_{n\in \NN}| \xx_n^*(x)|
$$
is immediate. Now for $1\le n\le 2n_k$, define
$$
\delta_{n}\ :=\
\begin{cases}
\frac{|\xx_n^*(x)|}{\xx_n^*(x)}, & \mbox{ if } n\in \supp(x); \\
-\frac{|\xx_{n+n_k}^*(x)|}{\sqrt{n_k}\xx_{n+n_k}^*(x)}, &\mbox{ if } n+n_k\in \supp(x);\\
0, & \mbox{ otherwise}. 
\end{cases}
$$
It is routine to check that $\delta=(\delta_j)_{1\le j\le 2n_k}\in \Delta_k$ and that
$$
\sum_{n=1}^{2n_k}\delta_n \xx_n^*(x)\ =\ \sum_{n\in \NN}| \xx_n^*(x)|, 
$$
which completes the proof of \eqref{claim1}. 

The proof of \eqref{claim2} is similar: for $1\le n\le 2n_k$, we define
$$
\delta_{n}\ :=\
\begin{cases}
\frac{| \xx_n^*(x)|}{\sqrt{n_k}\xx_n^*(x)},& \mbox{ if } n\in \supp(x); \\
-\frac{| \xx_{n-n_k}^*(x)|}{\xx_{n-n_k}^*(x)}, &\mbox{ if } n-n_k\in \supp(x);\\
0, & \mbox{ otherwise},
\end{cases}
$$
and \eqref{claim2} follows. 

For $k>1$, we find a lower bound for $\la_{n_k}$. Let 
$$x\ :=\ \Ind_{I_{n_k}}+\frac{1}{\sqrt{n_k}}\Ind_{\{n_k+1, \ldots, 2n_k\}}.$$ 
Then 
$$
Q_k(x)\ \le\ \frac{1}{m_k}\quad \mbox{ and }\quad D_k(x)\ =\ 0. 
$$
For $1\le l<k$, 
$$
\max\{D_l(x), Q_l(x)\}\ \le\ 2n_{k-1}\ \le\ m_k, 
$$
whereas for $l>k$, 
$$
Q_l(x)\ \le\ \frac{2n_k}{m_{l}}\ \le\ 1 \quad \mbox{ and }\quad D_l(x)\ =\ 0. 
$$
On the other hand, by \eqref{claim1}, we have
$$
\|\Ind_{\{n_k+1, \ldots, 2n_k\}}\|\ \ge\ Q_k(\Ind_{\{n_k+1, \ldots, 2n_k\}})\ =\ \frac{n_k}{m_k}. 
$$
Since $I_{n_k}\in G(x,n_k)$, it follows from  the above estimates  that
$$
\la_{n_k}\ \ge\ \frac{n_k}{m_k^2}. 
$$

Next, we find an upper bound for $\la_{n_k}^d$. To this end, fix $x\in \XX$, $1\le m\le n_k$, $B\in G(x,m)$, $A\in \NN^{(m)}$ with $\supp(x)\cap A=\emptyset$, and $\varepsilon\in \EE_A$. We may assume that $\min_{n\in B}|\xx_n^*(x)|=1$. For each $l>k$, we have
$$
\max\{ Q_l(\Ind_{\varepsilon, A}), D_l(\Ind_{\varepsilon, A})\}\ \le\ \frac{n_k}{m_l}\ \le\ 1\ \le\ \|x\|. 
$$
On the other hand, for $1\le l<k$, 
$$
\max\{Q_l(\Ind_{\varepsilon, A}), D_l(\Ind_{\varepsilon, A})\}\ \le\ 2n_{l}\ \le\ m_k\ \le\ m_k \|x\|\ \le\ \sqrt{n_k} \|x\|. 
$$
Also, 
$$
D_k(\Ind_{\varepsilon, A})\ \le\ \frac{\sqrt{n_k}}{m_k}\ \le\  \frac{\sqrt{n_k}}{m_k}\|x\|.
$$
To find an upper estimate for $Q_k(\Ind_{\varepsilon, A})$, let $A_1:=A\cap I_{n_k}$ and $A_2:=A\cap \{n_k + 1, \ldots, 2n_k\}$. We have
$$
Q_k(\Ind_{\varepsilon, A})\ \le\ Q_k(\Ind_{\varepsilon, A_1})+Q_k(\Ind_{\varepsilon, A_2})\ \le\ \frac{\sqrt{n_k}}{m_k}\|x\|+Q_k(\Ind_{\varepsilon, A_2}).
$$
If $|A_2| >0$, we proceed by case analysis.

Case 1: If $| B\cap I_{2n_k}^c| \ge |A_2|/4$, then by \eqref{claim1},
$$
\|x\|\ \ge\ D_k(x)\ \ge\ D_k(P_B(x))\ \ge\ \frac{\sqrt{|A_2|}}{2m_k} \ =\ \frac{Q_k(\Ind_{\varepsilon, A_2})}{2\sqrt{|A_2|}}\ \ge\ \frac{Q_k(\Ind_{\varepsilon, A_2})}{2\sqrt{n_k}}. 
$$

Case 2: If $| B_1:=B\cap I_{n_k}| \ge |A_2|/4$, choose $A_3\subset A_2$ and $B_3\subset B_1$ so that $|A_3|=| B_3|\ge |A_2|/4$ and define 
$$
\delta_n\ :=\
\begin{cases}
\frac{| \xx_n^*(x)|}{\sqrt{n_k}\xx_n^*(x)}, & \mbox{ if } n\in B_3;\\
0, & \mbox{ if } n\in I_{n_k}\backslash B_3;\\
0, & \mbox{ if } n\in \{n_k+1, \ldots, 2n_k\}\backslash A_3. 
\end{cases}
$$
Given that $|A_3| = |B_3|$, one can define $(\delta_n)_{n\in A_3}$ so that $\delta=(\delta_n)_{n=1}^{2n_k}\in \Delta_k$. Then by \eqref{claim1},
$$
\|x\|\ \ge\ Q_k(x)\ \ge\ \frac{1}{m_k} \sum_{n=1}^{2n_k}\delta_n \xx_n^*(x)\ \ge\ \frac{|A_2|}{4m_k\sqrt{n_k}}
 \ =\ \frac{Q_k(\Ind_{\varepsilon, A_2})}{4\sqrt{n_k}}.
$$

Case 3: If $|B_2:=B\cap \{n_k + 1, \ldots, 2n_k\}| \ge |A_2|/2$, choose $A_4\subset A_2$ and $B_4\subset B_2$ with $|A_4|=|B_4|\ge |A_2|/2$ and let $\sigma: A_4\rightarrow B_4$ be a bijection. Define for $1\le n\le 2n_k$, 
$$
\delta_n\ :=\
\begin{cases}
\frac{| \xx_n^*(x)|}{\xx_n^*(x)}, &\mbox{ if } n\in B_4;\\
-\frac{| \xx_{\sigma(n)}^*(x)|}{\xx_{\sigma(n)}^*(x)}, &\mbox{ if } n\in A_4;\\
0, &\mbox{ if } n\in I_{2n_k}\backslash (A_4\cup B_4). 
\end{cases}
$$
Then $\delta=(\delta_n)_{n=1}^{2n_k}\in \Delta_k$, so
$$
\|x\|\ \ge\ Q_k(x)\ \ge\ \frac{1}{m_k} \sum_{n=1}^{2n_k}\delta_n \xx_n^*(x)\ \ge \ \frac{|A_2|}{2m_k}\ =\ \frac{Q_k(\Ind_{\varepsilon, A_2})}{2}.
$$

We have considered all cases, and combining the above estimates yields
$$
\| \Ind_{\varepsilon, A}\|\ \le\ 5\sqrt{n_k}\|x\|,
$$
so 
\begin{equation}\label{e3}\la_{n_k}^{d}\ \le\ 5\sqrt{n_k}.\end{equation}

It remains to establish an upper bound for $\mathbf g_{n_k}$. Fix $x\in \XX$, $1\le m\le n_k$, and $B\in G(x,m)$. We may assume that $\|x\|=1$, so $\max_n|\xx^*_n(x)|\le 1$. For each $l\in \NN$, 
$$
 D_l(P_B(x))\ \le\ D_l(x)\ \le\ \|x\|.  
$$
Now, suppose there is an $s\in \NN$ such that 
$$
\| P_B(x)\|\ =\ Q_s(P_B(x))
$$
If $s > k$, then 
$$\|P_B(x)\|\ =\ Q_s(P_B(x))\ \le\ \frac{n_k}{m_s}\ \le\ \frac{1}{2}\ =\ \frac{1}{2}\|x\|.$$
If $s< k$, then
$$\|P_B(x)\|\ =\ Q_s(P_B(x))\ \le\ 2n_s\ < \ m_k \ <\ \frac{\sqrt{n_k}}{m_k}\|x\|.$$
Suppose that $s = k$. Let $B_1:=B\cap I_{n_k}$ and $B_2:=B\cap \{n_k + 1, \ldots, 2n_k\}$. Then 
$$
\| P_B(x)\|\ =\ Q_k(P_B(x))\ \le\ Q_k(P_{B_1}(x))+Q_k(P_{B_2}(x)), 
$$
and since $\max_{n}|\xx_n^*(x)|\le 1$, 
\begin{align}
Q_k(P_{B_1}(x))&\ =\ \frac{1}{m_k}\left|\sup_{\delta \in \Delta_k}\sum_{j=1}^{2n_k}\delta_j  \xx_j^*(P_{B_1}(x))\right|\nonumber\\
&\ \le\ \frac{1}{m_k}\sum_{j = 1}^{n_k}| \delta_j \xx_j^*(x)|\ \le\ \frac{\sqrt{n_k}}{m_k}\ =\ \frac{\sqrt{n_k}}{m_k}\|x\|. \label{thefirstone}
\end{align}
If $Q_k(P_{B_2}(x))\le \frac{\sqrt{n_k}}{m_k}$, then we use the same upper bound in this case. Otherwise, \eqref{claim1} and the fact that $\max_{n}|\xx_n^*(x)|\le 1$ imply that $| B_2|>\sqrt{n_k}$. Hence, using again  \eqref{claim1}, $\max_{n}|\xx_n^*(x)|\le 1$, and  
$$
\frac{25}{m_k}\ <\ \frac{\sqrt{n_k}}{m_k}\ <\ Q_k(P_{B_2}(x)), 
$$
we can write $B_2=B_{2,1}\sqcup B_{2,2}$ with $B_{2,1}\in G(P_{B_2}(x))$ and 
\begin{equation}
Q_k(P_{B_{2,1}}(x))\ \ge\ Q_k(P_{B_{2,2}}(x))\ \ge\  Q_k(P_{B_{2,1}}(x))-\frac{2}{m_k}\ \ge\ \frac{1}{2}Q_k(P_{B_{2,1}}(x)).  \label{equalsplit}
\end{equation}
Let 
$$
\theta_1\ :=\ \min_{n\in B_{2,1}}| \xx_n^*(x)|, \quad \theta_2\ :=\ \max_{n\in B_{2,2}}| \xx_n^*(x)|, \quad
D\ :=\ \left\{ n\in I_{n_k}: | \xx_n^*(x)|\ \le\ \frac{\sqrt{n_k}}{2}\theta_1\right\}.  
$$
We proceed by case analysis.

Case 1: If $|D|\ge |B_{2,1}|$, choose $D_1\subset D$ so that $| D_1|=|B_{2,1}|$ and let $\sigma: D_1\rightarrow B_{2,1}$ be a bijection. For $1\le n\le 2n_k$, define 
$$
\delta_n\ :=\
\begin{cases}
\frac{| \xx_n^*(x)|}{\xx_n^*(x)}, & \mbox{ if }n\in B_{2,1};\\
-\frac{1}{\sqrt{n_k}}\frac{|\xx_{\sigma(n)}^*(x)|}{\xx_{\sigma(n)}^*(x)}, & \mbox{ if }n\in D_1;\\
0, & \mbox{ otherwise. } 
\end{cases}
$$
It is routine to check that $\delta = (\delta_j)_{1\le j\le 2n_k}\in \Delta_k$. Since 
$$
Q_k(P_{B_2}(x))\ \le\ 2 Q_k(P_{B_{2,1}}(x)), 
$$
we have 
\begin{align*}
m_k Q_k(x)&\ \ge\ \sum_{n=1}^{2n_k}\delta_n \xx_n^*(x)\ =\ \sum_{n\in B_{2,1}}| \xx_n^*(x)| -\frac{1}{\sqrt{n_k}}\sum_{n\in D_1}| \xx_{n}^*(x)|\\
&\ \ge\ \sum_{n\in B_{2,1}}| \xx_n^*(x)|-|D_1|\frac{\theta_1}{2}\\
&\ \ge\  \frac{1}{2}\sum_{n\in B_{2,1}}| \xx_n^*(x)|\ =\ \frac{m_k}{2}Q_k(P_{B_{2,1}}(x))\ \ge\ \frac{m_k}{4}Q_k(P_{B_{2}}(x)). 
\end{align*} 
Hence, 
$$
Q_k(P_{B_{2}}(x))\ \le\ 4Q_k(x)\ \le\ 4\|x\|.  
$$

Case 2: If $|D|<|B_{2,1}|$, let $D':=I_{n_k}\backslash D$. It follows from \eqref{claim1} and \eqref{equalsplit} that 
$$\theta_1|B_{2,1}|\ \le\ \sum_{n\in B_{2,1}}|\xx^*_n(x)|\ \le\ 2\sum_{n\in B_{2,2}}|\xx_n^*(x)|\ \le\ 2|B_{2,2}|\theta_2\ \le\ 2\theta_1|B_{2,2}|,$$
so $|B_{2,1}|\le 2|B_{2,2}|$. As $|B_{2,1}|+|B_{2,2}|\le n_k$, this entails that $|B_{2,1}|\le 2n_k/3$. Hence, 
$$|D'|\ =\ n_k-|D|\ >\ n_k-|B_{2,1}|\ \ge\ \frac{n_k}{3}.$$ 
Moreover, for each $n\in D'$, 
\begin{equation}\label{e2}
|\xx_n^*(x)|\ >\ \frac{\sqrt{n_k}\theta_1}{2}\ >\ \theta_1. 
\end{equation}
Since $B\in G(x)$ and $\min B\le \theta_1$, \eqref{e2} implies that $D'\subset B_1$. Hence, \eqref{claim1}, \eqref{claim2}, and \eqref{equalsplit} yield
\begin{align*}
Q_k(P_{B_1}(x))&\ \ge\ \frac{1}{m_k\sqrt{n_k}}\sum_{n\in B_1}| \xx_n^*(x)|\\
&\  \ge\ \frac{1}{m_k\sqrt{n_k}}\sum_{n\in D'}| \xx_n^*(x)|\\
&\ \ge\ \frac{\theta_1}{2m_k}|D'|\ \ge\ \frac{\theta_1 n_k}{6m_k}\\
&\ \ge\ \frac{1}{6}Q_k(P_{B_{2,2}}(x))\ \ge\ \frac{1}{18}Q_k(P_{B_2}(x)).
\end{align*}
Therefore, \eqref{thefirstone} gives
$$
Q_k(P_{B_2}(x))\ \le\ \frac{18\sqrt{n_k}}{m_k}\|x\|. 
$$
Combining all of the above, we obtain
\begin{equation}\label{e4}
\| P_B(x)\|\ \le\  \frac{19\sqrt{n_k}}{m_k}\|x\|\ \Longrightarrow \  \mathbf g_{n_k}\ \le\ \frac{19\sqrt{n_k}}{m_k}.
\end{equation}
It follows from \eqref{e3} and \eqref{e4} that
$$
\max\{ \mathbf g_{n_k}, \la_{n_k}^d\} \ \le\  5\sqrt{n_k}. 
$$

For $\varepsilon>0$, we have 
$$
\frac{\la_{m_k}}{\max\{ \mathbf g_{n_k}, \la_{n_k}^d\}^{2-\varepsilon}}\ \ge\ \frac{n_k^{\varepsilon/2}}{25m_k^2} \ \xrightarrow[k\to \infty]{}\ \infty.  
$$
This completes our proof.
\end{proof}

\begin{corollary}\label{corollaryRMetc}We have $\mathfrak{R}_M(\la_{m}, \mathbf L_m^a)=\mathfrak{R}_B(\la_{m}, \mathbf L_m^a)=2$.
\end{corollary}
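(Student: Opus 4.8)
The plan is to assemble the corollary from two ingredients already in place: the upper estimate of Lemma \ref{lemmanotp} and the lower estimate of Proposition \ref{propositionLmalambda}, glued together by the elementary monotonicity $\mathfrak{R}_M(\la_m,\mathbf L_m^a)\le \mathfrak{R}_B(\la_m,\mathbf L_m^a)$ recorded in Remark \ref{remarkrelparam}. Concretely, I would establish $\mathfrak{R}_B(\la_m,\mathbf L_m^a)\le 2$ and $\mathfrak{R}_M(\la_m,\mathbf L_m^a)\ge 2$ as separate one-sided bounds; since these sandwich both ratio functions between the common value $2$, the equality follows at once.

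For the upper bound, Lemma \ref{lemmanotp} furnishes a constant $C=C(p)$ with $\la_m\le C(\mathbf L_m^a)^2$ valid for \emph{every} fundamental minimal system of \emph{every} $p$-Banach space. Thus $\sup_m \la_m/(\mathbf L_m^a)^2\le C$ for every space and its fundamental system, which says precisely that $2\in U_B(\la_m,\mathbf L_m^a)$. Taking the infimum gives $\mathfrak{R}_B(\la_m,\mathbf L_m^a)\le 2$, and by Remark \ref{remarkrelparam} also $\mathfrak{R}_M(\la_m,\mathbf L_m^a)\le\mathfrak{R}_B(\la_m,\mathbf L_m^a)\le 2$.

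For the lower bound, I would invoke the single Markushevich basis $\XB$ of Proposition \ref{propositionLmalambda}, for which $\limsup_{m\to\infty}\la_m/(\mathbf L_m^a)^{2-\varepsilon}=\infty$ for \emph{every} $\varepsilon>0$. Fix any $r<2$ and put $\varepsilon:=2-r>0$; then for this particular basis $\sup_m \la_m/(\mathbf L_m^a)^{r}=\sup_m \la_m/(\mathbf L_m^a)^{2-\varepsilon}=\infty$, so no finite $C(\XB,\XX)$ can bound it and hence $r\notin U_M(\la_m,\mathbf L_m^a)$. As $r<2$ was arbitrary, $U_M(\la_m,\mathbf L_m^a)\subseteq[2,\infty)$, whence $\mathfrak{R}_M(\la_m,\mathbf L_m^a)=\inf U_M(\la_m,\mathbf L_m^a)\ge 2$; combined with Remark \ref{remarkrelparam} this also yields $\mathfrak{R}_B(\la_m,\mathbf L_m^a)\ge\mathfrak{R}_M(\la_m,\mathbf L_m^a)\ge 2$.

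All the real work sits in Lemma \ref{lemmanotp} and Proposition \ref{propositionLmalambda}; the corollary is only the bookkeeping that converts those estimates into statements about $\inf U_B$ and $\inf U_M$. There is no genuine obstacle, but the one point deserving care is that Proposition \ref{propositionLmalambda} produces \emph{one} Markushevich basis that witnesses the failure of boundedness simultaneously for all exponents $2-\varepsilon$ (the construction of $\XX$ and $\XB$ is independent of $\varepsilon$, and only the final limit computation uses it). This is exactly what licenses the conclusion $\mathfrak{R}_M(\la_m,\mathbf L_m^a)\ge 2$ rather than the weaker $\mathfrak{R}_M(\la_m,\mathbf L_m^a)\ge 2-\varepsilon$ for a fixed $\varepsilon$.
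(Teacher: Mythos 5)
Your proposal is correct and follows exactly the paper's route: the paper's proof of Corollary \ref{corollaryRMetc} is precisely the combination of Lemma \ref{lemmanotp} (for the upper bound $\mathfrak{R}_B\le 2$) and Proposition \ref{propositionLmalambda} (for the lower bound $\mathfrak{R}_M\ge 2$), together with the monotonicity of Remark \ref{remarkrelparam}. Your additional observation that the single basis of Proposition \ref{propositionLmalambda} witnesses unboundedness for all exponents $2-\varepsilon$ simultaneously is the right point to flag, and it holds since the construction there does not depend on $\varepsilon$.
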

\begin{proof}
This follows from Lemma \ref{lemmanotp} and Proposition \ref{propositionLmalambda}. 
\end{proof}

We close this section by computing $\mathfrak{R}_S(\la_{m}, \mathbf L_m^a)$ and several ratios that involve $\bm{\nu}_m$.

\begin{proposition}
We have $\mathfrak{R}_S(\la_m, \mathbf L_m^a) = 1$.     
\end{proposition}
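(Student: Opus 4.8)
The plan is to prove the two inequalities $\mathfrak{R}_S(\la_m,\mathbf L_m^a)\le 1$ and $\mathfrak{R}_S(\la_m,\mathbf L_m^a)\ge 1$ separately: the first as a uniform comparison $\la_m\lesssim \mathbf L_m^a$ valid for every Schauder basis, and the second by producing a single Schauder basis for which the ratio $\la_m/(\mathbf L_m^a)^{\varepsilon}$ is unbounded for each $\varepsilon<1$.

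For the upper bound I would show that $1\in U_S(\la_m,\mathbf L_m^a)$. This simply chains two facts already at hand: Theorem \ref{t3v3} gives $\la_m\le C(\XB,\XX)\,\la_m^d$ for any Schauder basis, while \eqref{ch2} (that is, \cite[Theorem 4.2]{AAB2}) gives $\la_m^d\le \max\{\la_m^d,\mathbf g_m\}\lesssim \mathbf L_m^a$. Composing the two yields $\la_m\le C(\XB,\XX)\,\mathbf L_m^a$, whence $\sup_m \la_m/\mathbf L_m^a<\infty$ and $\mathfrak{R}_S(\la_m,\mathbf L_m^a)\le 1$.

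For the lower bound I would reuse the summing basis $\XB$ of $\mathtt{c}_0$ from Remark \ref{remarkimprovement}, which is a Schauder basis. Taking $f=\sum_{n=1}^{2m}(-1)^n\xx_n$ (so $\|f\|=1$ and every coefficient has modulus $1$), the greedy set $B=\{2,4,\dots,2m\}$ of order $m$, and $A=\{2,4,\dots,2m\}$ with all signs equal to $1$, the norm computations recorded in Remark \ref{remarkimprovement} give $\min_{n\in B}|\xx_n^*(f)|\,\|\Ind_{\varepsilon,A}\|=\|\sum_{n=1}^m\xx_{2n}\|=m$, hence $\la_m\ge m$. Since Remark \ref{remarkimprovement} also bounds $\mathbf g_m\le 2m$, inequality \eqref{ch2} gives $\mathbf L_m^a\lesssim \max\{\la_m^d,\mathbf g_m\}\le \max\{\la_m,2m\}\le 2\la_m$, using $\la_m^d\le \la_m$ and $m\le \la_m$. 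Consequently, for every $\varepsilon\in(0,1)$ we get $\la_m/(\mathbf L_m^a)^{\varepsilon}\gtrsim \la_m^{1-\varepsilon}\ge m^{1-\varepsilon}\to\infty$, so $\varepsilon\notin U_S(\la_m,\mathbf L_m^a)$; by the monotonicity of the exponent noted after the definition of the ratio functions (recall $\mathbf L_m^a\ge 1$) and exactly as in Example \ref{exa1}, this forces $\mathfrak{R}_S(\la_m,\mathbf L_m^a)\ge 1$.

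The delicate point is the lower bound: one must ensure that $\mathbf L_m^a$ does not grow faster than $\la_m$ for the chosen basis, which amounts to the inequality $\mathbf g_m\lesssim \la_m$. For the summing basis this is precisely what the combination $\la_m\ge m$ and $\max\{\mathbf g_m,\la_m\}\le 2m$ provides; once it is secured, the blow-up $\la_m^{1-\varepsilon}\to\infty$ is immediate, and together with the upper bound we conclude $\mathfrak{R}_S(\la_m,\mathbf L_m^a)=1$.
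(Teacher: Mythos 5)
Your proof is correct, and while the upper bound $\mathfrak{R}_S(\la_m,\mathbf L_m^a)\le 1$ follows the same chain as the paper (Theorem \ref{t3v3} followed by \eqref{ch2}), your lower bound takes a genuinely different route. The paper argues by contradiction: assuming $\la_m\lesssim(\mathbf L_m^a)^\varepsilon$ for all Schauder bases, it combines Theorem \ref{t3v3} with \eqref{ch2} to get $\la_m^d\lesssim\max\{(\la_m^d)^\varepsilon,(\mathbf g_m)^\varepsilon\}$ and then invokes the existence of a quasi-greedy, non-democratic Schauder basis (so that $\mathbf g_m\lesssim 1$ while $\la_m^d\to\infty$) to derive the contradiction $\la_m^d\lesssim(\la_m^d)^\varepsilon$. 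You instead exhibit a single concrete witness, the summing basis of $\mathtt{c}_0$, and verify directly that $\la_m\ge m$ (via $f=\sum_{n=1}^{2m}(-1)^n\xx_n$, $\|f\|=1$, $\|\Ind_{\{2,4,\dots,2m\}}\|=m$) while $\mathbf L_m^a\lesssim\max\{\la_m^d,\mathbf g_m\}\le 2m\le 2\la_m$ by \eqref{ch2} and the bounds in Remark \ref{remarkimprovement}; your identification of the crux as the inequality $\mathbf g_m\lesssim\la_m$ for the chosen basis is exactly right, and all the ingredients check out (any $m$-subset of $\{1,\dots,2m\}$ is a greedy set of $f$ since all its coefficients have modulus one, and $\mathbf L_m^a\ge 1$ justifies the monotonicity step). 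What each approach buys: the paper's argument is shorter modulo the standard existence of conditional quasi-greedy bases, whereas yours is fully explicit and quantitative, showing that the ratio $\la_m/(\mathbf L_m^a)^{\varepsilon}$ grows at least like $m^{1-\varepsilon}$ for a classical basis, and it has the side benefit of reusing a computation already present in the paper.
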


\begin{proof}
    By Theorem \ref{t3v3}, 
    $$\frac{\la_m}{\mathbf L_m^a}\ \lesssim\ \frac{\la_m^d}{\mathbf L_m^a}\ \lesssim\ 1,$$
    so $\mathfrak{R}_S(\la_m, \mathbf L_m^a) \le 1$. We show that $\mathfrak{R}_S(\la_m, \mathbf L_m^a)\ge 1$. Suppose, for a contradiction, that $\mathfrak{R}_S(\la_m, \mathbf L_m^a) < 1$. Then there is $\varepsilon\in (0,1)$ with 
    $$\frac{\la_m}{(\mathbf L_m^a)^\varepsilon}\ \lesssim\ 1.$$
    Hence, by Theorem \ref{t3v3} and \eqref{ch2}, \begin{equation}\label{e5}\la_m^d\ \lesssim\ (\mathbf L_m^a)^\varepsilon\ \lesssim\ \max\{(\la_m^d)^\varepsilon, (\mathbf g_m)^\varepsilon\}.\end{equation}
    Let $\XX$ be a space with a Schauder basis $\mathcal{X}$ that is quasi-greedy but not democratic. Then $$\lim_{m\rightarrow\infty}\la_m^d \ =\ \infty\quad \mbox{ and }\quad \mathbf g_m \ \lesssim\ 1.$$ Hence, 
    \begin{equation}\label{e6} \max\{(\la_m^d)^\varepsilon, (\mathbf g_m)^\varepsilon\}\ \approx\ (\la_m^d)^\varepsilon.\end{equation}
    By \eqref{e5} and \eqref{e6}, $\la_m^d \lesssim (\la_m^d)^{\varepsilon}$, which contradicts that 
    $\la_m^d\rightarrow\infty$. 
\end{proof}

One can find in the literature Schauder bases for which $\tilde{\boldsymbol\mu}_m$ is bounded but $\boldsymbol{\nu}_m$ is not (see for instance, \cite[Example 3.15]{BDKOW} or \cite[Theorem 4.4]{AAB2024}).  Finally, there are Schauder bases for which $\boldsymbol{\nu}_m$ is bounded but $\la_m^d$ is not (see \cite[Theorem 3.1]{AAB2024} and \cite[Proposition 5.3]{AABW2021}). It follows that
\begin{align*}
\mathfrak{R}_B(\la^d_{m}, \boldsymbol{\nu}_m)&\ =\ \mathfrak{R}_M(\la^d_{m}, \boldsymbol{\nu}_m)\ =\ \mathfrak{R}_S(\la^d_{m}, \boldsymbol{\nu}_m)\ =\ \infty;\\
\mathfrak{R}_B(\boldsymbol{\nu}_m, \boldsymbol\mu_m)&\ =\ \mathfrak{R}_M(\boldsymbol{\nu}_m, \boldsymbol\mu_m) \ =\ \mathfrak{R}_S(\boldsymbol{\nu}_m, \boldsymbol\mu_m)\ =\ \infty.
\end{align*}

\begin{proposition}\label{propnula}
    We have
    $$\mathfrak{R}_B(\boldsymbol{\nu}_m, \la_{m})\ =\ \mathfrak{R}_M(\boldsymbol{\nu}_m, \la_{m})\ =\ \mathfrak{R}_S(\boldsymbol{\nu}_m, \la_{m})=1.$$
\end{proposition}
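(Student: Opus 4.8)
The plan is to read off the value $1$ by squeezing the chain $\mathfrak{R}_S(\boldsymbol{\nu}_m,\la_m)\le \mathfrak{R}_M(\boldsymbol{\nu}_m,\la_m)\le \mathfrak{R}_B(\boldsymbol{\nu}_m,\la_m)$ of Remark \ref{remarkrelparam}. It suffices to prove two things: a universal upper bound $\boldsymbol{\nu}_m\lesssim \la_m$ valid for every fundamental minimal system (which forces $1\in U_B(\boldsymbol{\nu}_m,\la_m)$, hence $\mathfrak{R}_B\le 1$ and therefore all three ratios $\le 1$), and a single Schauder example forcing $\mathfrak{R}_S\ge 1$. Combining the two ends of the chain then gives $1\le \mathfrak{R}_S\le \mathfrak{R}_M\le \mathfrak{R}_B\le 1$.

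For the upper bound I would fix a configuration realizing $\boldsymbol{\nu}_m$: sets $|A|=|B|\le m$, signs $\varepsilon\in\EE_A$, $\delta\in\EE_B$, and $f$ with $\max_n|\xx_n^*(f)|\le 1$ and $\supp(f)\cap(A\cup B)=\emptyset$. Set $h:=\Ind_{\delta,B}+f$. Since $\supp(f)\cap B=\emptyset$, every coefficient of $h$ supported on $B$ has modulus $1$ while every coefficient off $B$ has modulus $\le 1$; hence $B$ is a greedy set of $h$ of order $|B|\le m$ with $\min_{n\in B}|\xx_n^*(h)|=1$. Applying the definition of $\la_m$ to $h$ twice --- once with the set $A$ and signs $\varepsilon$ (legitimate because $|A|=|B|$), once with the set $B$ and signs $\delta$ --- yields $\|\Ind_{\varepsilon,A}\|\le \la_m\|h\|$ and $\|\Ind_{\delta,B}\|\le \la_m\|h\|$.

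The crucial point is to control $\|f\|$ in terms of $\la_m\|h\|$ \emph{without} passing through $\mathbf{g}_m^c$. The naive route $\|f\|=\|h-P_B(h)\|\le\mathbf{g}_m^c\|h\|$ introduces the quasi-greedy parameter, which need not be dominated by $\la_m$ and would spoil the exponent $1$; this is the main obstacle. Instead I would write $f=h-\Ind_{\delta,B}$ and use $p$-subadditivity twice: first $\|f\|^p\le \|h\|^p+\|\Ind_{\delta,B}\|^p\le(1+\la_m^p)\|h\|^p$, and then $\|\Ind_{\varepsilon,A}+f\|^p\le \|\Ind_{\varepsilon,A}\|^p+\|f\|^p\le(1+2\la_m^p)\|h\|^p$. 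Since $\la_m\ge\boldsymbol{\mu}_m\ge 1$, this gives $\boldsymbol{\nu}_m\le(1+2\la_m^p)^{1/p}\le 3^{1/p}\la_m$, establishing $\mathfrak{R}_B\le 1$. The insight is that one only needs to estimate $\|\Ind_{\delta,B}\|$, not the whole greedy projection, and that this too is linearly controlled by $\la_m$.

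For the lower bound $\mathfrak{R}_S\ge 1$ it suffices to exhibit one Schauder basis on which $\boldsymbol{\nu}_m/\la_m^R\to\infty$ for every $R<1$. I would take a normalized $1$-unconditional, non-democratic Schauder basis, for instance the canonical basis of $\ell_1\oplus\ell_2$. For a $1$-unconditional basis one checks directly that $\la_m=\boldsymbol{\mu}_m$: signs are irrelevant, so $\|\Ind_{\varepsilon,A}\|=\|\Ind_A\|\le\boldsymbol{\mu}_m\|\Ind_B\|$, while $\min_{n\in B}|\xx_n^*(f)|\,\|\Ind_B\|\le\|P_B(f)\|\le\|f\|$, giving $\la_m\le\boldsymbol{\mu}_m$; the reverse $\boldsymbol{\mu}_m\le\la_m$ always holds. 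As the basis is not democratic, $\boldsymbol{\mu}_m\to\infty$, hence $\la_m\to\infty$; and since $\boldsymbol{\nu}_m\ge\tilde{\boldsymbol{\mu}}_m\ge\boldsymbol{\mu}_m=\la_m$, we get $\boldsymbol{\nu}_m/\la_m^R\ge\la_m^{1-R}\to\infty$ for every $R<1$. Thus $R\notin U_S(\boldsymbol{\nu}_m,\la_m)$ for all such $R$, so $\mathfrak{R}_S\ge 1$, and the squeeze completes the proof.
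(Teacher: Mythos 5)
Your proof is correct, but it differs from the paper's at both ends. For the upper bound, the paper simply cites \cite[Proposition 8.2]{AAB2} for $\boldsymbol{\nu}_m\lesssim\la_m$, whereas you reprove it from scratch; your argument is sound: with $h=\Ind_{\delta,B}+f$, the set $B$ is indeed a greedy set of $h$ with $\min_{n\in B}|\xx_n^*(h)|=1$, so the definition of $\la_m$ applied to the pairs $(A,B)$ and $(B,B)$ controls both $\|\Ind_{\varepsilon,A}\|$ and $\|\Ind_{\delta,B}\|$ by $\la_m\|h\|$, and your observation that one should recover $\|f\|$ via $f=h-\Ind_{\delta,B}$ and $p$-subadditivity (rather than via $\mathbf g_m^c$) is exactly what keeps the exponent equal to $1$. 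For the lower bound, the paper argues by contradiction: assuming $\boldsymbol{\nu}_m\lesssim\la_m^{\varepsilon}$ for all Schauder bases, it combines the inequality $\la_m\lesssim\mathbf r_m\boldsymbol{\mu}_m$ from \cite[(5.3)]{AAB2} with $\boldsymbol{\mu}_m\le\boldsymbol{\nu}_m$ to get $\boldsymbol{\nu}_m^{1-\varepsilon}\lesssim\mathbf r_m^{\varepsilon}$, and then contradicts this with a quasi-greedy, non-democratic Schauder basis (for which $\mathbf r_m\lesssim 1$ but $\boldsymbol{\nu}_m\to\infty$). You instead pick a concrete $1$-unconditional non-democratic basis and verify the exact identity $\la_m=\boldsymbol{\mu}_m$ there, which together with $\boldsymbol{\nu}_m\ge\tilde{\boldsymbol{\mu}}_m\ge\boldsymbol{\mu}_m$ (take $f=0$ in the definition of $\boldsymbol{\nu}_m$) gives $\boldsymbol{\nu}_m/\la_m^{R}\ge\boldsymbol{\mu}_m^{1-R}\to\infty$. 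Both routes ultimately rest on producing a basis where $\la_m$ is comparable to $\boldsymbol{\mu}_m$; yours is more elementary and self-contained (no appeal to $\mathbf r_m$ or to Theorem \ref{trunc}), while the paper's contradiction scheme is the one it reuses elsewhere (e.g.\ for $\mathfrak{R}_S(\la_m,\mathbf L_m^a)$) and so keeps the exposition uniform. No gaps.
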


\begin{proof}
    By \cite[Proposition 8.2]{AAB2}, we have $\mathfrak{R}_B(\boldsymbol{\nu}_m, \la_{m})\le 1$. Suppose, for a contradiction, that $\mathfrak{R}_S(\boldsymbol{\nu}_m, \la_{m}) < 1$, i.e., there is an $\varepsilon\in (0,1)$ such that for all Schauder bases, 
    $$\frac{\boldsymbol{\nu}_m}{(\la_{m})^\varepsilon}\ \lesssim \ 1.$$
    By \cite[(5.3)]{AAB2}, 
    \begin{equation}\label{e7}\la_m    \ \lesssim\ \ \mathbf r_m \bm{ \mu}_m \ \Longrightarrow\  \frac{\bm \nu_m}{ \mathbf r^{\varepsilon}_m \bm{ \mu}^\varepsilon_m} \lesssim\ 1\ \Longrightarrow\ \frac{\bm \nu_m^{1-\varepsilon}}{\mathbf r^{\varepsilon}_m}\ \lesssim\ 1.\end{equation}
    Let $\XX$ be a space with a quasi-greedy Schauder basis $\XB$ that is not democratic (see \cite[Proposition 6.10]{BDKOW}. Then $\sup_m \bm{\nu}_m = \infty$, while $\mathbf r_m \lesssim 1$. This contradicts \eqref{e7}.
    Therefore, 
    $$1\ \le\ \mathfrak{R}_S(\boldsymbol{\nu}_m, \la_{m}) \ \le\ \mathfrak{R}_M(\boldsymbol{\nu}_m, \la_{m})\ \le\ \mathfrak{R}_B(\boldsymbol{\nu}_m, \la_{m})\ \le\ 1,$$
    and we are done. 
\end{proof}

The next proposition follows immediately from Proposition \ref{propnula} and Theorem \ref{m4}. 

\begin{proposition}
    We have
$$\mathfrak{R}_B(\boldsymbol{\nu}_m, \la^d_{m})\ =\ \mathfrak{R}_M(\boldsymbol{\nu}_m, \la^d_{m})\ =\ 2 \mbox{ and }\mathfrak{R}_S(\boldsymbol{\nu}_m, \la^d_{m})\ =\ 1.$$
\end{proposition}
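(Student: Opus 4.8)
The plan is to read off the $B$- and $M$-values directly from Theorem \ref{m4} and to obtain the $S$-value by chaining Proposition \ref{propnula} with Theorem \ref{m4} via the elementary transitivity of the ratio functions. For the first assertion there is essentially nothing to do: the third bullet of Theorem \ref{m4} already states $\mathfrak{R}_B(\boldsymbol\nu_m, \la_m^d) = \mathfrak{R}_M(\boldsymbol\nu_m, \la_m^d) = 2$, so those two equalities may simply be quoted.

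The only genuine content is $\mathfrak{R}_S(\boldsymbol\nu_m, \la_m^d) = 1$, which I would establish by two opposite inequalities. For the upper bound I would use the general composition property of the sets $U_S(\cdot,\cdot)$: if $\mathbf a_m \le C(\mathbf b_m)^{R_1}$ and $\mathbf b_m \le C'(\mathbf c_m)^{R_2}$ hold for every Schauder basis with $R_1,R_2\ge 0$, then $\mathbf a_m \le C(C')^{R_1}(\mathbf c_m)^{R_1 R_2}$, so $R_1 R_2\in U_S(\mathbf a_m,\mathbf c_m)$; taking infima over $R_1\in U_S(\mathbf a_m,\mathbf b_m)$ and $R_2\in U_S(\mathbf b_m,\mathbf c_m)$ yields $\mathfrak{R}_S(\mathbf a_m,\mathbf c_m)\le \mathfrak{R}_S(\mathbf a_m,\mathbf b_m)\,\mathfrak{R}_S(\mathbf b_m,\mathbf c_m)$. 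Applying this with $(\mathbf a_m,\mathbf b_m,\mathbf c_m)=(\boldsymbol\nu_m,\la_m,\la_m^d)$ and invoking Proposition \ref{propnula} together with Theorem \ref{m4} gives
\[
\mathfrak{R}_S(\boldsymbol\nu_m, \la_m^d)\ \le\ \mathfrak{R}_S(\boldsymbol\nu_m, \la_m)\,\mathfrak{R}_S(\la_m, \la_m^d)\ =\ 1\cdot 1\ =\ 1.
\]

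For the reverse inequality I would exploit the trivial bound $\la_m^d \le \la_m$, valid by definition since imposing $A\cap\supp(f)=\emptyset$ only shrinks the supremum defining $\la_m$. Hence $(\la_m^d)^R \le (\la_m)^R$ for every $R\ge 0$, so any $R$ with $\boldsymbol\nu_m \lesssim (\la_m^d)^R$ also satisfies $\boldsymbol\nu_m \lesssim (\la_m)^R$; that is, $U_S(\boldsymbol\nu_m, \la_m^d)\subseteq U_S(\boldsymbol\nu_m, \la_m)$, and taking infima gives $\mathfrak{R}_S(\boldsymbol\nu_m, \la_m^d)\ge \mathfrak{R}_S(\boldsymbol\nu_m, \la_m)=1$ by Proposition \ref{propnula}. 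Combining the two inequalities completes the $S$-case, and hence the proposition.

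I do not expect any serious obstacle: the argument is pure bookkeeping with the $U_S$-sets. The one point I would state carefully is the transitivity of power-type bounds, which is legitimate precisely because the exponents involved are nonnegative; this is what allows Proposition \ref{propnula} and Theorem \ref{m4} to be composed. As a sanity check one can also verify the upper bound at exponent exactly $1$: $\boldsymbol\nu_m \lesssim \la_m$ holds for every fundamental minimal system (as used in Proposition \ref{propnula}) and $\la_m \lesssim \la_m^d$ holds for Schauder bases by Theorem \ref{t3v3}, so $\boldsymbol\nu_m \lesssim \la_m^d$ and thus $1\in U_S(\boldsymbol\nu_m,\la_m^d)$ directly, confirming $\mathfrak{R}_S(\boldsymbol\nu_m,\la_m^d)\le 1$ even without passing to infima.
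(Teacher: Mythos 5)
Your argument is correct and is essentially the paper's own proof: the paper simply states that the proposition "follows immediately from Proposition \ref{propnula} and Theorem \ref{m4}", and your write-up just makes explicit the bookkeeping (quoting the third bullet of Theorem \ref{m4} for the $B$- and $M$-values, composing $\mathfrak{R}_S(\boldsymbol{\nu}_m,\la_m)=1$ with $\mathfrak{R}_S(\la_m,\la_m^d)=1$ for the upper bound, and using $\la_m^d\le\la_m$ to transfer the lower bound). No discrepancy with the paper.
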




\begin{thebibliography}{99}
\bibitem{AA2017}F. Albiac and J. L. Ansorena, Characterization of $1$-almost greedy bases, \textit{Rev. Mat. Complut.} \textbf{30} (2017), 13--24.
\bibitem{AAB2024} F. Albiac, J. L. Ansorena, and M. Berasategui, Linear versus nonlinear forms of partial unconditionality of bases, \textit{J. Funct. Anal} \textbf{287} (2024), 1--34.
\bibitem{AABB} F. Albiac, J. L. Ansorena, M. Berasategui, and P. M. Bern\'{a}, Conditional bases with Property (A), preprint. Available at: \url{https://arxiv.org/abs/2409.04883}.
\bibitem{AAB2} F. Albiac, J. L. Ansorena, and P. M. Bern\'{a}, New parameters and Lebesgue-type estimates in greedy approximation, \textit{Forum Math. Sigma} \textbf{10} (2022), 1--39.
\bibitem{AABW2021} F. Albiac, J. L. Ansorena, P. M. Bern\'{a}, and P. Wojtaszczyk, Greedy approximation for biorthogonal systems in quasi-Banach spaces, \textit{Dissertationes Math.} \textbf{560} (2021), 1--88.
\bibitem{AABCO} F. Albiac, J. L. Ansorena, \'{O}. Blasco, H. V. Chu, and T. Oikhberg, Counterexamples in isometric theory of symmetric and greedy bases, \textit{J. Approx. Theory}
\textbf{297} (2024), 1--20.
\bibitem{AW} F. Albiac and P. Wojtaszczyk, Characterization of $1$-greedy bases, \textit{J. Approx.
Theory} \textbf{138} (2006),  65--86.
\bibitem{Ao} T. Aoki, Locally bounded linear topological spaces, \textit{Proc. Imp. Acad. Tokyo} \textbf{18} (1942), 588--594.
\bibitem{BBG2025} M. Berasategui, P. M. Bern\'{a}, and D. Gonz\'{a}lez, Approximation by polynomials with constant coefficients and the Thresholding Greedy Algorithm, \textit{Carpathian J. Math} \textbf{41} (2025), 30--44.
\bibitem{BBL} M. Berasategui, P. M. Bern\'{a}, and S. Lassalle, Strong partially greedy bases and Lebesgue-type inequalities, \textit{Constr. Approx.} \textbf{54} (2021), 507--528.
\bibitem{BL2023a} M. Berasategui and S. Lassalle, Weak greedy algorithms and the equivalence between semi-greedy and almost greedy Markushevich bases, \textit{J. Fourier Anal. Appl.} \textbf{29} (2023), 1--37.
\bibitem{BL2023b} M. Berasategui and S. Lassalle, Weak weight-semi-greedy Markushevich bases, \textit{Proc. Roy. Soc. Edinburgh Sect. A} \textbf{154} (2023), 1118--1179.
\bibitem{Berna2019} P. M. Bern\'{a}, Equivalence between almost-greedy and semi-greedy bases, \textit{J. Math. Anal. Appl.} \textbf{470} (2019), 218--225.
\bibitem{Berna2020} P. M. Bern\'{a}, Characterization of weight-semi-greedy bases, \textit{J. Fourier Anal. Appl.} \textbf{26} (2020), 1--21. 
\bibitem{Berna2} P. M. Bern\'{a}, A note on partially-greedy bases in quasi-Banach spaces, \textit{Studia Math.} \textbf{259} (2021), 225--239.
\bibitem{BBG2017} P. M. Bern\'{a}, \'{O}. Blasco, and G. Garrig\'{o}s, Lebesgue inequalities for the greedy algorithm in general bases, \textit{Rev. Mat. Complut.} \textbf{30} (2017), 369--392.
\bibitem{BBGHO2018} P. M. Bern\'{a}, \'{O}. Blasco, G. Garrig\'{o}s, E. Hern\'{a}ndez, and T. Oikhberg, Embeddings and Lebesgue-type inequalities for the greedy algorithm in Banach spaces, \textit{Constr. Approx.} \textbf{48} (2018), 415--451.
\bibitem{BBGHO20} P. M. Bern\'{a}, \'{O}. Blasco, G. Garrig\'{o}s, E. Hern\'{a}ndez, and T. Oikhberg, 
Lebesgue inequalities for Chebyshev thresholding greedy algorithms, \textit{Rev. Mat. Complut.} \textbf{33} (2020), 695--722.
\bibitem{BDKOW} P. M. Bern\'{a}, S. J. Dilworth, D. Kutzarova, T. Oikhberg, and B. Wallis, The weighted property (A) and the greedy algorithm, \textit{J. Approx. Theory} \textbf{248} (2019), 1--18. 
\bibitem{DKK2003} S. J. Dilworth, N. J. Kalton, and D. Kutzarova, On the existence of almost greedy bases in Banach spaces, \textit{Studia Math.} \textbf{159} (2003), 67--101. 
\bibitem{DKKT2003} S. J. Dilworth, N. J. Kalton, D. Kutzarova, and V. N. Temlyakov, The thresholding greedy algorithm, greedy bases, and duality, \textit{Constr. Approx.} \textbf{19} (2003), 575--597.
\bibitem{DKO} S. J. Dilworth, D. Kutzarova, and T. Oikhberg, Lebesgue constants for the weak greedy algorithm, \textit{Rev. Mat. Complut.} \textbf{28} (2015), 393--409.
\bibitem{GHO2013} G. Garrig\'os, E. Hern\'{a}ndez, and T. Oikhberg, Lebesgue-type inequalities for quasi-greedy bases, \textit{Constr. Approx.} \textbf{38} (2013), 447--470.
\bibitem{KoTe1999} S. V. Konyagin and V. N. Temlyakov, A remark on greedy approximation in Banach spaces,
\textit{East J. Approx.} \textbf{5} (1999), 365--379.
\bibitem{R} S. Rolewicz, On a certain class of linear metric spaces, \textit{Bull. Acad. Polon. Sci. Cl. III.} \textbf{5} (1957), 471--473.
\bibitem{Temlyakov2011} V. N. Temlyakov, Question posed during the Concentration week on
greedy algorithms in Banach spaces and compressed sensing held on 18--22
July, 2011, at Texas A\&M University.
\bibitem{Woj2000} P. Wojtaszczyk, Greedy algorithm for general biorthogonal systems,
\textit{J. Approx. Theory} \textbf{107} (2000), 293--314.
\end{thebibliography}
\end{document}